\documentclass{article}
\usepackage{graphicx} 
\usepackage{url}
\usepackage{amssymb,cmap,verbatim}
\usepackage{amsthm}
\usepackage{amsmath}
\usepackage{commath}

\newtheorem{theorem}{Theorem}[section]
\newtheorem{lemma}[theorem]{Lemma}
\newtheorem{corollary}[theorem]{Corollary}
\newtheorem{example}[theorem]{Example}

\theoremstyle{definition}
\newtheorem{definition}[theorem]{Definition}

\usepackage{xcolor}

\usepackage{tikz}
\usetikzlibrary{positioning,shapes,arrows,calc,decorations.pathreplacing,calligraphy, arrows.meta, fit}
\usepackage{float}
\usepackage{subcaption}

\usepackage[backref=page]{hyperref}

\title{Model Comparison Games for Generalized Quantifiers}

\usepackage{authblk}
\author[1]{Antti~Kuusisto}
\author[1]{Miguel~Moreno}
\author[1]{Matias~Selin}
\affil[1]{Tampere University, Finland}

\date{}

\begin{document}

\maketitle

\begin{abstract}
    We introduce three new model comparison games that characterize separability by first-order formulas with generalized quantifiers. The first is built on the Ehrenfeucht–Fraïssé game, the second is a formula-size game, and the third unifies them both and incorporates minor quantifiers.
\end{abstract}

\section{Introduction}

Due to the limited expressive power of first-order logic, several extensions of it have been studied, some of which are obtained by the addition of \textit{generalized quantifiers}. Clearly, by increasing the expressive power of a logic, more differences between models can be expressed, i.e., the logic can separate more models. This leads to studying the characterization of structures separable in the given logic. Some authors have used games to study the separability of models: Kolaitis and Väänänen in \cite{MR1336413} with pebble games, Hella and Väänänen in \cite{hellavaananen} with a formula-size game, and Haber and Shelah in \cite{MR3485648} with the Ehrenfeucht--Fraïssé (EF) game.

In this article, we introduce variations of these games that characterize separability in $\mathrm{FO}(\mathcal{Q})$, which is first-order logic whose quantifiers belong to a finite set $\mathcal{Q}$ of generalized quantifiers.\footnote{Note that our notation of $\mathrm{FO}(\mathcal{Q})$ thus differs from the literature \cite{lindstrom1966first, ebbinghaus2005finite}, since instead of \textit{extending} FO we \textit{replace} the existential and universal quantifiers with the quantifiers in $\mathcal{Q}$. Of course $\mathcal{Q}$ may still contain $\exists$ and $\forall$.} We prove that the EF-game characterizes separability by quantifier depth, and that the formula-size game characterizes separability by formula size. When $\mathcal{Q}=\{\exists,\forall\}$, our logic coincides with ordinary first-order logic; our games, however, differ from the classical ones, as they are designed to handle arbitrary (and in particular, non-monotone) quantifiers.

Further, instead of using normal forms, as Hella and Väänänen in \cite{hellavaananen}, our version of the formula-size game does not assume negation normal form, but instead treats negation as a regular logical operator that contributes to the length of the formula. We also introduce a notion of \emph{weak separability}, where each pair of structures is separated individually rather than by a single uniform formula. We show that weak and strong separability coincide when separating finitely many structures, and give a game characterization of weak separability.

Finally, we show how our games can be played with \textit{minor quantifiers} \cite{kuusisto2015double}, with the intuitive idea being to choose sets that are sufficient to witness or falsify the given formula, instead of always having to choose the full extension. Interpreting connectives as (minor) quantifiers also allows us to reduce the move options in the game to a single unified quantifier move.

Some of the ideas for these games originate from the articles \cite{DBLP:journals/corr/Kuusisto14} and \cite{kuusisto2015double}. Using these intuitions, an extension of the \textit{bisimulation game} to cover propositional logic extended with a finite collection of generalized \textit{modalities} that strongly resembles the EF-game described in Section \ref{sec:ef-game} was given in \cite{jaakkola2025graph}.

\section{Preliminaries}

\subsection{Definitions and Notations}\label{subsec:definitions}

We denote the set of non-negative integers by $\mathbb{N}$ and the set of positive integers by $\mathbb{Z}_+$. We denote the $m$-tuple $( x_1,\ldots, x_m)$ by boldface $\mathbf{x}$ and the $(m+1)$-tuple $( x_1,\ldots,  x_m,y)$ by $\mathbf{x} y$. For all $1\leq i\leq m$, we denote the $i$th element of the tuple as $\mathbf{x}(i)=x_i$.


We consider relational models with a finite \textbf{vocabulary} $\tau=\{R_1,\dots,R_n\}$, where each $R_i$ has an arity of $k_i\in\mathbb{N}$. A $\tau$\textbf{-model} is a pair $\mathfrak{A}=(A,T)$, where $A$ is a non-empty (possibly infinite) set called the \textbf{domain} of the model and $T$ is a function that maps each $k$-ary relation symbol $R\in \tau$ to a set of $k$-tuples of the domain, i.e., $R^{\mathfrak{A}}:=T(R)\in \mathcal{P}(A^k)$. Models $\mathfrak{A},\mathfrak{B}$ are always written in Fraktur typeface, their domains $A,B$ in regular typeface and classes of models $\mathcal{A},\mathcal{B}$ in calligraphic typeface.

We denote the countably infinite set of \textbf{variable symbols} or simply \textbf{variables} by $\mathrm{VAR}:=\{x_i\mid i \in \mathbb{Z}_+\}$. Often, we use the usual meta-variables $x,y,z$ to denote variables. Let $X\subseteq \mathrm{VAR}$ be finite and $\mathfrak{A}$ be a model. A \textbf{(variable) assignment} over $\mathfrak{A}$ is a (possibly empty) function $f:X\to A$.

We might encounter the case where the same variable appears multiple times in a single tuple, e.g., in the formula $Q(xxy)\, R(x,x,y)$. We handle this by requiring that to extend the assignment with such a quantifier, the interpreted point tuple must \textit{respect} this variable repetition. Let $\textbf{x}\in\text{VAR}^n$ and $\textbf{a}\in A^n$ for some $n\in\mathbb{Z}_+$. We say that $\textbf{a}$ \textbf{respects} $\textbf{x}$\textbf{-repetitions} if when $\textbf{x}$ repeats a variable, $\textbf{a}$ repeats the corresponding value, i.e., if $\textbf{x}(i)=\textbf{x}(j)$ for some $1\leq i,j\leq n$ then $\textbf{a}(i)=\textbf{a}(j)$. If each member $\textbf{a}\in X\subseteq A^n$ of a set of tuples respects $\textbf{x}$-repetitions, then we say that the set $X$ respects $\textbf{x}$-repetitions.

If $f$ is an assignment over $\mathfrak{A}$ and $\textbf{a}$ respects $\textbf{x}$-repetitions, then $f\frac{\textbf{a}}{\textbf{x}}$ is defined such that $f\frac{\textbf{a}}{\textbf{x}}(y)=f(y)$ if $y\neq \textbf{x}(i)$ for all $1 \leq i \leq n$, and $f\frac{\textbf{a}}{\textbf{x}}(y)=\textbf{a}(i)$ if $y=\textbf{x}(i)$ for some $1\leq i \leq n$. By just $\frac{\textbf{a}}{\textbf{x}}$ we denote the assignment $\{(\textbf{x}(i),\textbf{a}(i))\mid 1 \leq i \leq n\}$ (again requiring that $\textbf{a}$ respects $\textbf{x}$-repetitions).

A \textbf{generalized quantifier of width} $k\in\mathbb{N}$ \textbf{and type} $\mathbf{n}\in \mathbb N^k$ is an isomorphism-closed class $Q$ of structures $(D, P_1, \ldots , P_k)$ where $P_i\subseteq D^{\textbf{n}(i)}$ for all $1\leq i\leq k$. Let $\mathcal{Q}$ be a finite collection of such generalized quantifiers. The set of $\tau$-formulas of \textbf{first-order logic with the quantifiers} $\mathcal{Q}$ ($\mathrm{FO}(\mathcal{Q})$) is generated by the following grammar:
\[
    \varphi::=x_1=x_2\mid R(x_1,\dots,x_k)\mid Q\textbf{x}_1,\dots,\textbf{x}_k\, (\varphi_1,\dots,\varphi_k),
\]
where $x_1,x_2,\dots,x_k\in \mathrm{VAR}$, $R\in \tau$ has arity $k$, $Q\in\mathcal{Q}$ has width $k$ and type $\textbf{n}$, and $\textbf{x}_i\in \mathrm{VAR}^{\textbf{n}(i)}$ for all $1\leq i\leq k$. It is important to keep in mind that the set $\mathcal{Q}$ \textit{does not necessarily include the existential or universal quantifier}, which is where our notation of $\mathrm{FO}(\mathcal{Q})$ differs from the literature \cite{lindstrom1966first, ebbinghaus2005finite}. Formulas constructed with only the first two rules are called \textbf{atomic}. 

A variable $x\in\text{VAR}$ present in a formula $\varphi$ is \textbf{free} if 
\begin{enumerate}
    \item $\varphi$ is atomic, or
    \item $\varphi$ is of the form $Q\textbf{x}_1,\dots,\textbf{x}_k(\varphi_1,\dots,\varphi_k)$, and $x$ is free in some $\varphi_i$ and does not appear in the corresponding $\textbf{x}_i$.
\end{enumerate}
We denote a formula $\varphi$ that has (at least) the free variables $x_1,\dots,x_k$ by $\varphi(x_1,\dots,x_k)$. A formula with no free variables is called a \textbf{sentence}.

Let $\mathfrak{A}$ be a $\tau$-model and $f$ be an assignment over $\mathfrak{A}$ whose domain includes all free variables of the formula being evaluated. The semantics of $\mathrm{FO}(\mathcal{Q})$ are as follows:
\begin{enumerate}
    \item $\mathfrak{A},f\models x_1=x_2\iff f(x_1)=f(x_2)$.
    \item $\mathfrak{A},f\models R(x_1,\dots,x_k) \iff (f(x_1),\dots,f(x_k))\in R^{\mathfrak{A}}$.
    \item $\mathfrak{A},f\models Q \textbf{x}_1,\dots, \textbf{x}_k (\varphi_1,\dots, \varphi_k) \iff (A, \norm{\varphi_1}^{\mathfrak{A},f}_{\textbf{x}_1}, \dots, \norm{\varphi_k}^{\mathfrak{A},f}_{\textbf{x}_k})\in Q$,
\end{enumerate}
where
\[
    \norm{\varphi}^{\mathfrak{A},f}_\textbf{x}:=\{\textbf{a}\mid \mathfrak{A},f\frac{\mathbf{a}}{\textbf{x}}\models \varphi\}
\]
is the \textbf{extension} of $\varphi$ relative to $(\mathfrak{A},f)$ and $\textbf{x}$, i.e., the set of tuples of points that can be interpreted as $\textbf{x}$ such that $\varphi$ is satisfied. In particular, $\norm{\varphi}^{\mathfrak{A},f}=\{\emptyset \mid \mathfrak{A},f\models\varphi\}$.

\begin{example}
    The existential quantifier $\exists$, used as $\exists x \, \varphi$, has width $1$ and type $(1)$. It quantifies a single formula that binds a single variable, and checks whether $\norm{\varphi}^{\mathfrak{A},f}_x$ is nonempty.

    The Härtig quantifier $I$, used as $I\, x,y \,  (\varphi,\psi)$, has width $2$ and type $(1,1)$. It quantifies two formulas, each binding a single variable, and checks whether $\norm{\varphi}^{\mathfrak{A},f}_{x}$ and $\norm{\psi}^{\mathfrak{A},f}_{y}$ have the same cardinality.

    The quantifier $\mathrm{Ham}$, used as $\mathrm{Ham}\, (xy)\, \varphi$, has width $1$ and type $(2)$. It quantifies a single formula that binds a pair of variables, and checks whether the binary relation $\norm{\varphi}^{\mathfrak{A},f}_{(xy)}\subseteq A^2$ contains a Hamiltonian path through the domain.
\end{example}

Our definition of generalized quantifiers also allows $0$-width and $(0)$-type quantifiers. This allows us to interpret, e.g., negation $\neg$ and conjunction $\land$ as well as $\top$ and $\bot$ as quantifiers. As a consequence, we sometimes identify a logical operator (say, $\land$) with its interpretation as a quantifier ($Q_\land$).

\begin{example}
    The negation quantifier $Q_\neg$ has width $1$ and type $(0)$ and is defined such that $(D,P)\in Q_\neg$ if and only if $P=\emptyset$, and thus $\mathfrak{A},f\models Q_\neg(\varphi)$ if and only if $\norm{\varphi}^{\mathfrak{A},f}=\emptyset$.
    
    The conjunction quantifier $Q_\land$ has width $2$ and type $(0)$ and is defined such that $(D,P_1,P_2)\in Q_\land$ if and only if $P_1=P_2=\{\emptyset\}$, and thus $\mathfrak{A},f\models Q_\land(\varphi,\psi)$ if and only if $\norm{\varphi}^{\mathfrak{A},f}=\norm{\psi}^{\mathfrak{A},f}=\{\emptyset\}$.
    
    Quantifiers of width $0$ do not have a type at all, and can thus only speak about the size of the domain $D$. This class includes, for example, the quantifier $Q_\text{fin}:=\{D\mid D \text{ is finite}\}$. It also includes the quantifiers $Q_\top:=\{D\mid \text{always}\}$ and $Q_\bot:=\{D\mid \text{never}\}=\emptyset$, which correspond to the usual $\top$ and $\bot$.
\end{example}

We also denote the pair $(\mathfrak{A},\emptyset)$ as simply $\mathfrak{A}$, which gives rise to denoting $\mathfrak{A}\models\varphi$ if $\mathfrak{A},\emptyset\models\varphi$; of course, this notation only makes sense if $\varphi$ is a sentence. We call two formulas $\varphi$ and $\psi$ \textbf{equivalent} and write $\varphi\equiv\psi$ if $\mathfrak{A},f\models\varphi\iff\mathfrak{A},f\models\psi$ for all models $\mathfrak{A}$ and assignments $f$.

Finally, we say that $(\mathfrak{A},f)$ and $(\mathfrak{B},g)$ are \textbf{separable} in $\mathrm{FO}(\mathcal{Q})$ if there is a formula $\varphi\in\mathrm{FO}(\mathcal{Q})$ such that $\mathfrak{A},f\models\varphi$ and $\mathfrak{B},g\not\models \varphi$. Otherwise, we say that they are \textbf{equivalent} in $\mathrm{FO}(\mathcal{Q})$ and write $\mathfrak{A},f\equiv_{\mathrm{FO}(\mathcal{Q})}\mathfrak{B},g$.

\subsection{Types}

In this subsection, we prove two useful results that show that any set of points closed under equivalence in $\mathrm{FO}(\mathcal{Q})$, or more generally any finite collection of such sets, can be captured by a formula of $\mathrm{FO}(\mathcal{Q})$.

The \textbf{quantifier depth} of an $\mathrm{FO}(\mathcal{Q})$-formula is the maximum number of nested quantifiers in the formula. We use $\mathrm{FO}(\mathcal{Q})^d$ to denote the set of formulas of $\mathrm{FO}(\mathcal{Q})$ that have a quantifier depth of at most $d$, and $\equiv^d_{\mathrm{FO}(\mathcal{Q})}$ to denote equivalence in $\mathrm{FO}(\mathcal{Q})^d$.

Let $\mathfrak{A}$ be a $\tau$-model with assignment $f$, and let $x\in\text{VAR}$. We say that a subset $X\subseteq A$ of the domain is \textbf{closed under} $\equiv^d_{\mathrm{FO}(\mathcal{Q})}$ relative to $(\mathfrak{A},f)$ and $x$ if $a \in X$ and $\mathfrak{A},f\frac{a}{x}\equiv^d_{\mathrm{FO}(\mathcal{Q})} \mathfrak{A},f\frac{a'}{x}$ implies $a'\in X$, and that $X$ is \textbf{definable} by a formula $\varphi$ relative to $(\mathfrak{A},f)$ and $x$ if $\norm{\varphi}_x^{\mathfrak{A},f}=X$.

For each $a \in A$, the $d$\textbf{-type} of $a$ relative to $(\mathfrak{A},f)$ and $x$ is the equivalence class
\[
    [a]^d_x:=\{b \in A\mid \mathfrak{A},f\frac{a}{x}\equiv^d_{\mathrm{FO}(\mathcal{Q})}\mathfrak{A},f\frac{b}{x}\}.
\]
Since the definition of a $d$-type is directly based on equivalence in $\mathrm{FO}(\mathcal{Q})^d$, it is rather easy to see that all $d$-types are indeed definable in $\mathrm{FO}(\mathcal{Q})^d$. The only non-obvious part is showing that the defining formula is finitary, which it is for all models with finite index.

\begin{lemma}\label{lem:X-definable}
    Let $\mathfrak{A}$ be a $\tau$-model with assignment $f$, where $\tau$ is finite. Then for each $d\in\mathbb{N}$ and $a\in A$, the $d$-type $[a]^d_x$ is definable by a formula $\chi^d_a(x)\in\mathrm{FO}(\mathcal{Q})^d$.
\end{lemma}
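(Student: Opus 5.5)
The plan is the standard ``Hintikka formula'' argument. Throughout I will assume, as the remark before the lemma implicitly does, that the Boolean connectives $\neg$ and $\land$ can be used to combine formulas without increasing quantifier depth. If they are instead available only as the quantifiers $Q_\neg,Q_\land$, the lemma as stated would fail, e.g.\ for a $\mathcal{Q}$ with no negating quantifier. So this reading is a genuine assumption, and I will state it explicitly.

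Let $V:=\mathrm{dom}(f)\cup\{x\}$, which is finite because assignments have finite domain. Let $\Phi^d_V$ be the set of formulas of $\mathrm{FO}(\mathcal{Q})^d$ whose free variables lie in $V$. By the semantics, only these formulas can be evaluated at $(\mathfrak{A},f\frac{b}{x})$. Hence $b\in[a]^d_x$ if and only if, for every $\varphi\in\Phi^d_V$,
\[
\mathfrak{A},f\tfrac{a}{x}\models\varphi \iff \mathfrak{A},f\tfrac{b}{x}\models\varphi.
\]

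The key step is to show that $\Phi^d_V$ is finite up to logical equivalence. I would prove this by induction on $d$, simultaneously for all finite $V$.
\begin{enumerate}
\item For $d=0$ there are only finitely many atomic formulas over the finite vocabulary $\tau$ with variables in $V$. Their Boolean combinations are finite up to equivalence, since a Boolean combination is determined by its truth table.
\item For the inductive step, let $m$ be the largest arity occurring in the types of quantifiers in $\mathcal{Q}$. Any formula $Q\mathbf{x}_1,\dots,\mathbf{x}_k(\varphi_1,\dots,\varphi_k)$ of depth $\le d+1$ with free variables in $V$ can be rewritten, by renaming bound variables, so that each $\mathbf{x}_i$ uses variables from a fixed finite set $W\supseteq V$ of size $|V|+m$. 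This works because an isomorphism-closed quantifier cannot distinguish renamings of the bound tuple.
\item Each $\varphi_i$ then lies in $\Phi^d_W$, which is finite up to equivalence by the induction hypothesis. Replacing $\varphi_i$ by an equivalent formula does not change its extension, so the number of such quantified formulas up to equivalence is finite. Here we use that $\mathcal{Q}$ is finite, widths are bounded, and there are finitely many choices of the tuples $\mathbf{x}_i$ from $W$.
\item Closing under $\neg,\land$ again gives only finitely many classes, by the truth-table argument.
\end{enumerate}

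Choose representatives $\varphi_1,\dots,\varphi_N$ of the equivalence classes of $\Phi^d_V$ and define
\[
\chi^d_a(x):=\bigwedge_{\mathfrak{A},f\frac{a}{x}\models\varphi_j}\varphi_j\;\land\bigwedge_{\mathfrak{A},f\frac{a}{x}\not\models\varphi_j}\neg\varphi_j .
\]
This is a finite formula of depth $\le d$. By the characterization of $[a]^d_x$ above, $\mathfrak{A},f\frac{b}{x}\models\chi^d_a$ holds exactly when $b\in[a]^d_x$, so $\norm{\chi^d_a}^{\mathfrak{A},f}_x=[a]^d_x$.

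The main obstacle is step 2 of the finiteness argument, namely controlling the variables. Quantifiers may bind arbitrary fresh variables, and a bound tuple may repeat variables, so I must check two things. First, renaming bound variables preserves equivalence under the repetition-respecting semantics. Second, a single finite pool $W$ suffices at each level, which I would handle by letting $W$ grow by $m$ at each depth inside the induction.
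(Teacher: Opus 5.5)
Your proof is correct, but it is organized differently from the paper's.

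\textbf{The paper's route.} It inducts on $d$ by refinement. It sets $\chi^{d+1}_a:=\chi^d_a\land\bigwedge_i\theta_{b_i}$, where $b_1,\dots,b_k$ represent the $(d+1)$-types inside $[a]^d_x$ other than $[a]^{d+1}_x$. Each $\theta_{b_i}=Q_{b_i}y\,\psi_{b_i}(x,y)$ is a depth-$(d+1)$ formula cutting that type off. Finiteness of the conjunction is justified in a single sentence: there are finitely many $d$-types of pairs, hence finitely many non-equivalent $\psi_b$.

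\textbf{Your route.} You first prove that formulas of depth $\le d$ over any finite variable set are finite up to equivalence. You then take the full Hintikka conjunction in one step.

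\textbf{What each buys.} Your route makes explicit the finiteness fact both proofs rest on. The paper's stated induction hypothesis (definability of single-variable $d$-types in a fixed model) does not by itself bound the number of types of pairs. Your strengthening to all finite $V$ simultaneously is exactly what is needed, since quantifier bodies carry extra free variables. It also covers arbitrary width and type directly, whereas the paper's step is written for type-$(1)$ quantifiers. The paper's version has the mild advantage of producing a smaller formula: it needs only one separating formula per competing type rather than a literal for every representative of $\mathrm{FO}(\mathcal{Q})^d$ over $V$.

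Your explicit assumption that $\neg$ and $\land$ are depth-free connectives is the same one the paper makes implicitly, since it uses them in $\chi^0_a$ and $\chi^{d+1}_a$. You are also right that the lemma fails without them already at $d=0$.

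\textbf{One small correction.} Invariance under renaming bound variables does not come from isomorphism closure of $Q$. After a capture-avoiding injective renaming of $\mathbf{x}_i$, together with its free occurrences in $\varphi_i$, the extension $\norm{\varphi_i}^{\mathfrak{A},f}_{\mathbf{x}_i}$ is literally the same set of tuples. The repetition pattern is also preserved, because the renaming is injective. So all you need is the usual substitution lemma, proved by induction on formulas using the infinitude of $\mathrm{VAR}$ to avoid capture.
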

\begin{proof}

    We proceed by induction on $d$.

    \textbf{Base case.} Let $\Phi$ be a maximal set of non-equivalent atomic formulas of $\mathrm{FO}(\mathcal{Q})$ with free variables in $\text{dom}(f)\cup\{x\}$. Since $\text{dom}(f)$ and $\tau$ are both finite, $\Phi$ is also finite. Let $\Phi_a:=\{\varphi\in \Phi \mid \mathfrak{A},f\frac{a}{x}\models \varphi\}$. Now, the desired formula is
    \[
        \chi^0_a(x):=\bigwedge_{\varphi\in \Phi_a} \varphi \land \bigwedge_{\varphi \in \Phi\setminus\Phi_a}\neg \varphi,
    \]
    since $\mathfrak{A},f\frac{b}{x}\models\chi^0_a(x)$ if and only if $b\in[a]^0_x$.

    \textbf{Induction case.} Suppose the class $[a]^d_x$ is definable by a formula $\chi^d_a(x)\in\mathrm{FO}(\mathcal{Q})^d$ for some $d\in \mathbb{N}$. Each $b\in[a]^d_x\setminus [a]^{d+1}_x$ is separated from $a$ by some formula of depth $d+1$, which is, without loss of generality, of the form $\theta_b(x):=Q_b y\ \psi_b(x,y)$, where $Q_b \in \mathcal{Q}$ and $\psi_b(x,y)\in\mathrm{FO}(\mathcal{Q})^d$; here, ``separated'' means that $\mathfrak{A},f\frac{v}{x}\models\theta_b(x)$ for each $v \in [a]^{d+1}_x$ and $\mathfrak{A},f\frac{v'}{x}\not\models\theta_b(x)$ for each $v' \in [b]^{d+1}_x$.

    We verify that $[a]^d_x$ splits into finitely many $(d+1)$-types, so that the construction below is finitary. By the induction hypothesis, the $d$-types for pairs $(x,y)$ are finitely many, and hence there are finitely many non-equivalent formulas $\psi_b(x,y)\in\mathrm{FO}(\mathcal{Q})^d$. Combined with the finiteness of $\mathcal{Q}$, the number of non-equivalent formulas $\theta_b(x)$ is finite, so $[a]^d_x$ contains finitely many $(d+1)$-types.
    
    Let $b_1,\dots,b_k$ be representatives of the $(d+1)$-types in $[a]^d_x\setminus [a]^{d+1}_x$. The desired formula is thus
    \[
        \chi^{d+1}_a(x):=\chi^d_a(x)\land \bigwedge_{i=1}^k\theta_{b_i}(x),
    \]
    since $\mathfrak{A},f\frac{v}{x}\models \chi^{d+1}_a(x)$ if and only if $v \in [a]^d_x$ (by the induction hypothesis) and $v\notin [b]^{d+1}_x$ for each $b\in [a]^d_x\setminus [a]^{d+1}_x$.
\end{proof}

\begin{corollary}\label{cor:closed-definable}
    Let $\mathfrak{A}$ be a $\tau$-model with assignment $f$, where $\tau$ is finite. Then each $X\subseteq A$ closed under $\equiv^d_{\mathrm{FO}(\mathcal{Q})}$ is definable by a formula $\theta(x)\in\mathrm{FO}(\mathcal{Q})^d$.
\end{corollary}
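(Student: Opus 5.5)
The plan is to write $X$ as a finite union of $d$-types and then take the disjunction of their defining formulas from Lemma~\ref{lem:X-definable}. Since $X$ is closed under $\equiv^d_{\mathrm{FO}(\mathcal{Q})}$ relative to $(\mathfrak{A},f)$ and $x$, every $a\in X$ satisfies $[a]^d_x\subseteq X$. Hence
\[
    X=\bigcup_{a\in X}[a]^d_x ,
\]
so $X$ is a union of $d$-types.

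The key step, and the only real point needing care, is to show that there are only finitely many distinct $d$-types relative to $(\mathfrak{A},f)$ and $x$. Otherwise the disjunction would be infinitary. I would obtain this by the same counting used inside the proof of Lemma~\ref{lem:X-definable}. Up to equivalence, there are only finitely many formulas of $\mathrm{FO}(\mathcal{Q})^d$ whose free variables lie in $\mathrm{dom}(f)\cup\{x\}$. This follows by induction on $d$, using that $\tau$, $\mathcal{Q}$ and $\mathrm{dom}(f)$ are finite, together with the finiteness of the types (for tuples of variables) at the previous level, as argued in the lemma. A $d$-type is determined by which of these finitely many formulas hold. Therefore there are finitely many $d$-types, and we can choose representatives $a_1,\dots,a_m\in X$ with $X=[a_1]^d_x\cup\dots\cup[a_m]^d_x$.

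Finally, set
\[
    \theta(x):=\bigvee_{i=1}^m\chi^d_{a_i}(x),
\]
where $\chi^d_{a_i}(x)\in\mathrm{FO}(\mathcal{Q})^d$ is given by Lemma~\ref{lem:X-definable}. The disjunction is expressed through the available connectives, e.g.\ $\neg\bigwedge\neg$, as in the lemma's use of $\land$ and $\neg$. Since connectives are not counted as raising the depth in the lemma's construction, $\theta(x)\in\mathrm{FO}(\mathcal{Q})^d$. Then $\norm{\theta}^{\mathfrak{A},f}_x=\bigcup_i[a_i]^d_x=X$.

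If $X=\emptyset$, I would instead take a contradictory atomic-based formula such as $\chi\land\neg\chi$ for any $\chi$, for example $x=x\land\neg(x=x)$.
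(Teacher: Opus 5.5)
Your proposal is correct and follows the paper's proof: you write $X$ as a union of $d$-types and take the disjunction $\bigvee_i \chi^d_{a_i}(x)$ of the defining formulas from Lemma~\ref{lem:X-definable}. Two details the paper leaves implicit are filled in by your proposal: the explicit argument that there are only finitely many $d$-types (because there are finitely many formulas of depth $\leq d$ up to equivalence), and the separate treatment of $X=\emptyset$, where the paper's disjunction would be empty.
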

\begin{proof}
    By the closure assumption, $X$ is a union of $d$-types. Let $a_1,\dots,a_k\in X$ be representatives of the $d$-types in $X$. Then $\theta(x):=\bigvee^k_{i=1}\chi^d_{a_i}(x)$ defines $X$.
\end{proof}

\section{The EF-Game for Generalized Quantifiers}\label{sec:ef-game}

In this section, we present our EF-game that characterizes separability in $\mathrm{FO}(\mathcal{Q})$ by quantifier depth. The core idea is to modify the regular EF-game such that instead of individual points, \textit{sets} of points belonging to the quantifiers in $\mathcal{Q}$ are chosen.

We say that a pair of assignments $(f,g)$ with $\mathrm{dom}(f)=\mathrm{dom}(g)$ \textbf{induces a partial isomorphism} between $\tau$-models $\mathfrak{A}$ and $\mathfrak{B}$ if $(\mathfrak{A},f)$ and $(\mathfrak{B},g)$ agree on all atomic formulas, i.e., $\mathfrak{A},f\models\alpha$ if and only if $\mathfrak{B},g\models\alpha$ for every atomic formula $\alpha$ with free variables in $\mathrm{dom}(f)$.

\begin{definition}
    Let $\mathfrak{A}$ and $\mathfrak{B}$ be $\tau$-models, where $\tau$ is finite, and let $f$ and $g$ be (possibly empty) assignments over $\mathfrak{A}$ and $\mathfrak{B}$ respectively with $\mathrm{dom}(f)=\mathrm{dom}(g)$. The $\mathrm{EF}\{\mathcal{Q}\}(\mathfrak{A},\mathfrak{B},f,g)$-game is a two-player game that starts from the position $(\mathfrak{A},\mathfrak{B},f,g)$, with Player $\mathbb{I}$ starting as the \textbf{attacker} and Player $\mathbb{II}$ starting as the \textbf{defender}. The $i$th round proceeds from a position $(\mathfrak{M},\mathfrak{N},h,h')$, where $h$ and $h'$ are assignments over $\mathfrak{M}$ and $\mathfrak{N}$ with $\mathrm{dom}(h)=\mathrm{dom}(h')$, as follows:
    \begin{enumerate}
        \item The attacker chooses a quantifier $Q\in \mathcal{Q}$ and a variable $x\in\mathrm{VAR}$.
        \item The attacker chooses a \textbf{witness set} $X$ from the domain of either model (without loss of generality, suppose $X\subseteq M$) such that $(M,X)\in Q$ and a \textbf{spillover set} $P\subseteq N$ from the domain of the other model.\footnote{Another way to think about model selection is to give the attacker the option to switch which model is called $\mathfrak{M}$ and which $\mathfrak{N}$ before he chooses the sets.}
        \item The defender chooses a corresponding witness set $X'\subseteq N$ from the domain of the other model such that $(N,X')\in Q$ and $P\subseteq X'$.
        \item The attacker does one of the following:
        \begin{enumerate}
            \item Chooses $v'\in N\backslash X'$ and $v\in X$. The players swap roles. The next position is         $(\mathfrak{M},\mathfrak{N},h\frac{v}{x},h'\frac{v'}{x})$.
            \item Chooses $v'\in X'$. The defender now either
            \begin{itemize}
                \item chooses $v\in X$, after which the next position is             $(\mathfrak{M},\mathfrak{N},h\frac{v}{x},h'\frac{v'}{x})$, or
                \item chooses $v\in P$, after which the next position is             $(\mathfrak{N},\mathfrak{N},\frac{v}{x},\frac{v'}{x})$.
            \end{itemize}
        \end{enumerate}
        \item Let $(\mathfrak{M}',\mathfrak{N}',h_*,h'_*)$ be the position     determined in the previous step. If the pair $(h_*,h'_*)$ does not     induce a partial isomorphism between $\mathfrak{M}'$ and     $\mathfrak{N}'$, then the game ends and the attacker wins. Otherwise
        a new round begins from this position.
    \end{enumerate}
    At any point during steps 2--3, immediately after a set is chosen, the opposing player may \textbf{contest} that choice instead of letting the round continue normally. When a contestation occurs, the remaining steps of the round are skipped and replaced as follows:
    
    \begin{itemize}
        \item \textbf{Contesting a witness set.} After $X$ is chosen in step     2 or $X'$ is chosen in step 3, the opposing player may contest     that the set breaks equivalence by choosing $u \in Y$ and     $u' \in W \setminus Y$, where $Y$ is the contested set and $W$ is     its domain. If the contesting player is the attacker,     then the players swap roles. The next position is     $(\mathfrak{W},\mathfrak{W},h_W\frac{u}{x},h_W\frac{u'}{x})$, where     $h_W=h$ if $W=M$ and $h_W=h'$ if $W=N$.
    
        \item \textbf{Contesting a spillover set.} After $P$ is chosen in     step 2, the opposing player may contest that it contains a     type realized in $M$ by choosing $v' \in P$ and $v \in M$. The next     position is     $(\mathfrak{M},\mathfrak{N},h\frac{v}{x},h'\frac{v'}{x})$.
    \end{itemize}
    In both cases, the game then proceeds to step 5: the partial isomorphism check is performed on the new position, and if it passes, a new round begins from that position.
\end{definition}
Notice that Player $\mathbb{I}$ always plays the first move of the first round of the game, but each subsequent round can be started by either Player $\mathbb{I}$ or Player $\mathbb{II}$, depending on which one of them is the attacker. If at any point a player must make a move but cannot do so (e.g.\ there does not exist an $X\subseteq M$ such that $(M,X)\in Q$), then the other player wins immediately.

The intuition for the game is as follows (cf. Figure \ref{fig:ef-game}). First, if the formula separating $\mathfrak{A}$ and $\mathfrak{B}$ is $Qx\,\varphi$, then the strategy for the attacker is to choose $X$ to be precisely the extension of $\varphi$ in $\mathfrak{A}$. The defender cannot respond with an equivalent $X'$, since either it is too small (meaning it does not capture enough points that satisfy $\varphi$, which the attacker points out with his first option) or too large (meaning that some point in it does not satisfy $\varphi$, which the attacker points out with his second option). The point of the spillover set is to force the defender to also include those types that satisfy $\varphi$ but are not realized in $\mathfrak{A}$. The point of the contestation rules is to ensure that neither player cheats by choosing a witness set that is not expressible in $\mathrm{FO}(\mathcal{Q})$, and that the attacker's choice of the spillover set includes only types not realized in $\mathfrak{A}$.

\begin{figure}
    \centering
    \begin{subfigure}{0.45\textwidth}
        \centering
        \vspace{2.5em}
        \begin{tikzpicture}
            \tikzset{>={Latex[scale=1.0]}}
            \draw[dashed, purple!80!black, rounded corners=8pt] (-0.6,-0.6) rectangle (3.6,0.6);
            \node[purple!80!black] at (1.5, 1.0) {$X = \|B(x)\lor R(x)\|^{\mathfrak{A}}_x$};
            \node[draw, circle, fill=blue!25, inner sep=3pt] (a1) at (0,0) {};
            \node[draw, circle, fill=blue!25, inner sep=3pt] (a2) at (1.5,0) {};
            \node[draw, circle, fill=blue!25, inner sep=3pt] (a3) at (3,0) {};
            \node[draw, circle, inner sep=3pt] (a4) at (4.5,0) {};
        \end{tikzpicture}
        \caption{The model $\mathfrak{A}$.}
        \vspace{4.5em}
    \end{subfigure}
    \hfill
    \begin{subfigure}{0.45\textwidth}
        \centering
        \begin{tikzpicture}
            \tikzset{>={Latex[scale=1.0]}}
            \draw[dashed, orange, thick, rounded corners=8pt] (-0.6,-0.6) rectangle (3.6,0.6);
            \node[orange] at (1.5, 1.0) {$X' \neq  \|B(x)\lor R(x)\|^{\mathfrak{B}_1}_x$};
            \node[draw, circle, fill=blue!25, inner sep=3pt] (c1) at (0,0) {};
            \node[draw, circle, fill=blue!25, inner sep=3pt] (c2) at (1.5,0) {};
            \node[draw, circle, fill=blue!25, inner sep=3pt] (c3) at (4.5,0) {};
            \node[draw, circle, fill=red!25, inner sep=3pt] (c4) at (3.0,0) {};

            \node[draw, circle, dashed, minimum size=0.7cm, red] (q2) at (3.0,0) {};
            \node[red] at (3.83, 0) {$P$};
        \end{tikzpicture}
        \caption{The model $\mathfrak{B}_1$.}
        
        \vspace{2em}

        \begin{tikzpicture}
            \tikzset{>={Latex[scale=1.0]}}
            \draw[dashed, orange, thick, rounded corners=8pt] (-0.6,-0.6) rectangle (3.6,0.6);
            \node[orange] at (1.5, 1.0) {$X' \neq  \|B(x)\lor R(x)\|^{\mathfrak{B}_2}_x$};

            \node[draw, circle, dashed, minimum size=0.7cm, red] (q2) at (1.5,0) {};
            \node[red] at (2.1, 0) {$P$};
            
            \node[draw, circle, fill=blue!25, inner sep=3pt] (b1) at (0,0) {};
            \node[draw, circle, fill=red!25, inner sep=3pt] (b2) at (1.5,0) {};
            \node[draw, circle, inner sep=3pt] (b3) at (3,0) {};
            \node[draw, circle, inner sep=3pt] (b4) at (4.5,0) {};
        \end{tikzpicture}
        
        \caption{The model $\mathfrak{B}_2$.}
    \end{subfigure}
    \caption{Suppose the quantifier $\exists_{=3}$, meaning ``there exist exactly three'', is in $\mathcal{Q}$, and suppose $\tau=\{R,B\}$ where $R$ and $B$ are both unary. The model $\mathfrak{A}$ is separated from both $\mathfrak{B}_1$ and $\mathfrak{B}_2$ by the formula $\exists_{=3}x \,(B(x) \lor R(x))$, where nodes in the interpretation of $B$ are coloured in blue and nodes in the interpretation of $R$ in red. Player $\mathbb{I}$ thus has a winning strategy in the respective EF$\{\mathcal{Q}\}$-games by choosing the witness set \textcolor{purple!80!black}{$X$} and the spillover set \textcolor{red!95!black}{$P$}, since Player $\mathbb{II}$'s witness set \textcolor{orange}{$X'$} is then either too small in $\mathfrak{B}_1$ (since it must exclude a point that is blue) or too large in $\mathfrak{B}_2$ (since it must include a point that is neither blue nor red).}
    \label{fig:ef-game}
\end{figure}

By restricting the amount of rounds played $d$, we obtain a limit for the quantifier depth of the separating formula. The winner of the $\mathrm{EF}_d\{\mathcal{Q}\}(\mathfrak{A},\mathfrak{B},f,g)$\textbf{-game} is defined recursively as follows:
\begin{enumerate}
    \item The $\mathrm{EF}_0\{\mathcal{Q}\}(\mathfrak{A},\mathfrak{B},f,g)$-game is won by Player $\mathbb{I}$ if the pair $(f,g)$ does not induce a partial isomorphism between $\mathfrak{A}$ and $\mathfrak{B}$. Otherwise, it is won by Player $\mathbb{II}$.
    \item The $\mathrm{EF}_{d+1}\{\mathcal{Q}\}(\mathfrak{A},\mathfrak{B},f,g)$-game is won by Player $\mathbb{II}$ if Player $\mathbb{I}$ has not won the game after $d+1$ rounds have been played.
\end{enumerate}

\begin{lemma}\label{lem:basecase}
    Player $\mathbb{I}$ has a winning strategy in the $\mathrm{EF}_0\{\mathcal{Q}\}(\mathfrak{A},\mathfrak{B},f,g)$-game if and only if $(\mathfrak{A},f)$ and $(\mathfrak{B},g)$ are separable in $\mathrm{FO}(\mathcal{Q})^0$.
\end{lemma}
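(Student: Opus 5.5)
The plan is to unfold the definition of the $\mathrm{EF}_0\{\mathcal{Q}\}$-game and check that both sides of the equivalence reduce to the same condition on atomic formulas. By clause 1 of the recursive definition, Player $\mathbb{I}$ wins the $\mathrm{EF}_0\{\mathcal{Q}\}(\mathfrak{A},\mathfrak{B},f,g)$-game exactly when $(f,g)$ fails to induce a partial isomorphism between $\mathfrak{A}$ and $\mathfrak{B}$. So the lemma amounts to showing that $(f,g)$ fails to induce a partial isomorphism if and only if some $\varphi\in\mathrm{FO}(\mathcal{Q})^0$ satisfies $\mathfrak{A},f\models\varphi$ and $\mathfrak{B},g\not\models\varphi$.

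The key point is identifying $\mathrm{FO}(\mathcal{Q})^0$. With the grammar of Section~\ref{subsec:definitions}, every non-atomic formula begins with some $Q\in\mathcal{Q}$ and so has depth at least $1$. Hence $\mathrm{FO}(\mathcal{Q})^0$ is exactly the set of atomic formulas, namely identities $x_1=x_2$ and relational atoms $R(x_1,\dots,x_k)$. This matches the paper's convention that connectives count as quantifiers: the language has no separate negation, so depth-$0$ formulas cannot be negated.

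For the direction from separability to the game, let an atomic $\alpha$ satisfy $\mathfrak{A},f\models\alpha$ and $\mathfrak{B},g\not\models\alpha$. Since we only evaluate formulas whose free variables lie in the domain of the assignment, the free variables of $\alpha$ are in $\mathrm{dom}(f)=\mathrm{dom}(g)$. Thus $(\mathfrak{A},f)$ and $(\mathfrak{B},g)$ disagree on an atomic formula, $(f,g)$ does not induce a partial isomorphism, and Player $\mathbb{I}$ wins.

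For the converse, suppose $(f,g)$ does not induce a partial isomorphism. Then there is an atomic $\alpha$ with free variables in $\mathrm{dom}(f)$ on which the two structures disagree. There are two cases.
\begin{enumerate}
\item If $\mathfrak{A},f\models\alpha$ and $\mathfrak{B},g\not\models\alpha$, then $\alpha$ itself separates them.
\item If $\mathfrak{A},f\not\models\alpha$ and $\mathfrak{B},g\models\alpha$, we cannot simply take $\neg\alpha$, because $\neg\alpha$ has depth $1$.
\end{enumerate}

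The second case is the main obstacle. Separability as defined is asymmetric, while both the partial-isomorphism condition and the game are symmetric in the two structures. Disagreement in only this direction would give separability of $(\mathfrak{B},g)$ from $(\mathfrak{A},f)$, not the other way round. For example, take $f(x)\neq f(y)$ and $g(x)=g(y)$, with an empty vocabulary. Then the only atoms are identities, each true in $\mathfrak{B}$ whenever it is true in $\mathfrak{A}$, so no atom is true in $\mathfrak{A}$ and false in $\mathfrak{B}$.

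To resolve this, I would read ``separable'' in the lemma symmetrically, as separation in one direction or the other, which is how it is used in the inductive EF-characterization. Under that reading the second case is handled by $\alpha$ separating $(\mathfrak{B},g)$ from $(\mathfrak{A},f)$. If the one-directional reading were intended, the lemma would need $Q_\neg$ available at depth $0$, and I would flag this explicitly. I would commit to the symmetric reading and note the point in the proof.
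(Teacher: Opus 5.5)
There is a genuine gap: the proposal proves a modified statement, not the lemma. The cause is your identification of $\mathrm{FO}(\mathcal{Q})^0$ with the atomic formulas. Read literally, the grammar in Section~\ref{subsec:definitions} has no connectives outside $\mathcal{Q}$. But the paper treats Boolean connectives as free with respect to quantifier depth throughout:
\begin{itemize}
\item the base case of Lemma~\ref{lem:X-definable} puts $\chi^0_a(x)$, a conjunction of atoms and negated atoms, into $\mathrm{FO}(\mathcal{Q})^0$;
\item Lemma~\ref{lem:game-implies-equivalence} treats $\theta\lor\psi$ as a depth-$d$ formula;
\item the EF-game has no connective moves at all.
\end{itemize}
Your counterexample ($f(x)\neq f(y)$, $g(x)=g(y)$, empty vocabulary) is correct under the literal reading. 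The remedy, however, is to adopt the paper's convention, not to make separability symmetric, which contradicts the paper's explicitly one-directional definition. Your reading would also break the surrounding argument. With only atoms at depth $0$, the base case of Lemma~\ref{lem:X-definable} fails: for unary $P,R$, the $0$-type of an element satisfying $P$ but not $R$ is in general not defined by any atom. That definability, via Corollary~\ref{cor:closed-definable}, is exactly what Lemma~\ref{lem:game-implies-equivalence} needs at $d=0$.

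Once quantifier-free formulas count as depth $0$, both directions go through as in the paper.
\begin{itemize}
\item Suppose $(f,g)$ fails to induce a partial isomorphism because of an atom $\alpha$ with $\mathfrak{A},f\not\models\alpha$ and $\mathfrak{B},g\models\alpha$. Then $\neg\alpha\in\mathrm{FO}(\mathcal{Q})^0$ separates $(\mathfrak{A},f)$ from $(\mathfrak{B},g)$, so your ``main obstacle'' disappears.
\item In the other direction you only considered an atomic separating formula. What is needed is this: if $(f,g)$ induces a partial isomorphism, the two pairs agree on all atoms. By induction on the connectives they then agree on every quantifier-free formula, so nothing in $\mathrm{FO}(\mathcal{Q})^0$ separates them.
\end{itemize}
Under this convention $\mathrm{FO}(\mathcal{Q})^d$ is closed under negation, so one-directional and symmetric separability coincide at every depth. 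This is why the paper never needs your symmetric reading.
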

\begin{proof}
    ($\implies$): If Player $\mathbb{I}$ wins the $0$-round game, then $(f,g)$ does not induce a partial isomorphism. This means there exists an atomic formula $\alpha$ with free variables in $\mathrm{dom}(f)$ such that $\mathfrak{A},f\models\alpha$ and $\mathfrak{B},g\not\models\alpha$ (or vice versa, in which case $\neg\alpha$ separates them). Thus $(\mathfrak{A},f)$ and $(\mathfrak{B},g)$ are separable in $\mathrm{FO}(\mathcal{Q})^0$.

    ($\impliedby$): If $(f,g)$ induces a partial isomorphism, then $(\mathfrak{A},f)$ and $(\mathfrak{B},g)$ agree on all atomic formulas, and hence on all Boolean combinations thereof. Thus they are not separable in $\mathrm{FO}(\mathcal{Q})^0$.
\end{proof}

We now make the induction hypothesis that the following two statements are
equivalent for some $d\in\mathbb{N}$, for all $\tau$-models
$\mathfrak{A},\mathfrak{B}$, and for all assignments $f,g$ over
$\mathfrak{A}$ and $\mathfrak{B}$ with
$\mathrm{dom}(f)=\mathrm{dom}(g)$:
\begin{enumerate}
    \item Player $\mathbb{I}$ has a winning strategy in the $\mathrm{EF}_d\{\mathcal{Q}\}(\mathfrak{A},\mathfrak{B},f,g)$-game.
    \item $(\mathfrak{A},f)$ and $(\mathfrak{B},g)$ are separable in $\mathrm{FO}(\mathcal{Q})^d$.
\end{enumerate}

\begin{lemma}\label{lem:game-implies-equivalence}
    If Player $\mathbb{I}$ has a winning strategy in the $\mathrm{EF}_{d+1}\{\mathcal{Q}\}(\mathfrak{A},\mathfrak{B},f,g)$-game, then $(\mathfrak{A},f)$ and $(\mathfrak{B},g)$ are separable in $\mathrm{FO}(\mathcal{Q})^{d+1}$.
\end{lemma}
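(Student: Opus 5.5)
The plan is to argue by contraposition. Assume $(\mathfrak{A},f)\equiv^{d+1}_{\mathrm{FO}(\mathcal{Q})}(\mathfrak{B},g)$. I will describe a strategy for Player $\mathbb{II}$ for the first round. It guarantees that the round ends in one of two kinds of position:
\begin{enumerate}
\item[(i)] roles are unchanged and the two sides of the new position are $\equiv^d_{\mathrm{FO}(\mathcal{Q})}$-equivalent;
\item[(ii)] roles are swapped and the two sides of the new position are separable in $\mathrm{FO}(\mathcal{Q})^d$.
\end{enumerate}
In either case the induction hypothesis finishes the argument: the remaining $d$ rounds are won by whichever player is then attacker exactly when the position is separable in depth $d$. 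This uses a role-symmetric reading of the hypothesis, justified because the rules never refer to the names $\mathbb{I},\mathbb{II}$, and because a separating formula for the reversed pair is obtained by applying $Q_\neg$ (or by the symmetry already used in Lemma~\ref{lem:basecase}). So Player $\mathbb{I}$ cannot win the $(d+1)$-round game.

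I read the contest rules as follows: the contester exhibits a pair that \emph{is} $\equiv^d$-equivalent although the contested set separates it, and the opponent must then separate that pair. With this reading, $\mathbb{II}$'s strategy at steps 2--3 goes as follows, where $\mathbb{I}$ has chosen $Q$, $x$, $X\subseteq M$ with $(M,X)\in Q$, and $P\subseteq N$.

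\textbf{Contest $X$ if needed.} If $X$ is not closed under $\equiv^d$ relative to $(\mathfrak{M},h)$ and $x$, $\mathbb{II}$ contests $X$ with $u\in X$ and $u'\notin X$ that are $\equiv^d$-equivalent. Roles do not swap, so $\mathbb{I}$ must attack an equivalent pair, which is case (i).

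\textbf{Contest $P$ if needed.} If some $v'\in P$ has the same $d$-type as some $v\in M$ (relative to $h'$ and $h$ respectively), $\mathbb{II}$ contests $P$ with that pair. This is again case (i).

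\textbf{Otherwise, build $\theta'$ and play its extension.} If neither contest applies, Corollary~\ref{cor:closed-definable} gives $\theta(x)\in\mathrm{FO}(\mathcal{Q})^d$, a disjunction of type formulas $\chi^d_a$, with $\norm{\theta}^{\mathfrak{M},h}_x=X$. Apply Corollary~\ref{cor:closed-definable} in $\mathfrak{N}$ to get $\psi(x)$ defining the union of the $d$-types of points of $P$. Put
\[
\theta'(x):=\theta(x)\lor\Big(\psi(x)\land\bigwedge_{a}\neg\chi^d_a(x)\Big),
\]
where $a$ ranges over representatives of the finitely many $d$-types realized in $M$; finiteness comes from the proof of Lemma~\ref{lem:X-definable}. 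In $\mathfrak{M}$ the second disjunct is empty, so $\norm{\theta'}^{\mathfrak{M},h}_x=X$ and $\mathfrak{M},h\models Qx\,\theta'$. This sentence has depth $d+1$, so by the assumed equivalence $\mathfrak{N},h'\models Qx\,\theta'$. $\mathbb{II}$ therefore plays $X':=\norm{\theta'}^{\mathfrak{N},h'}_x$. Then $(N,X')\in Q$, and $P\subseteq X'$ because every point of $P$ satisfies $\psi$ and, by the previous step, no $\chi^d_a$. $\mathbb{I}$ cannot profitably contest $X'$: it is defined by a depth-$d$ formula, so any pair across its boundary is separable. After the role swap this puts $\mathbb{II}$, as attacker, in case (ii).

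\textbf{Step 4.} Suppose $\mathbb{I}$ picks $v'\in N\setminus X'$ and $v\in X$. Then $\theta'$ separates $(\mathfrak{M},h\frac{v}{x})$ from $(\mathfrak{N},h'\frac{v'}{x})$, roles swap, and we are in case (ii). Suppose instead $\mathbb{I}$ picks $v'\in X'$. If $v'$ satisfies some disjunct $\chi^d_a$ of $\theta$, $\mathbb{II}$ answers with $v:=a\in X$. If $v'$ satisfies the $\psi$-disjunct, $\mathbb{II}$ answers with a point $v\in P$ of the same $d$-type in $\mathfrak{N}$, giving an equivalent $(\mathfrak{N},\mathfrak{N})$ position. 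Both outcomes are case (i).

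\textbf{Main obstacle.} The delicate point is the cross-model use of $\chi^d_a$ in the first of these replies. Lemma~\ref{lem:X-definable} only guarantees that $\chi^d_a$ defines the $d$-type of $a$ \emph{inside} $\mathfrak{M}$, but I need that any $v'\in N$ satisfying $\chi^d_a$ is $\equiv^d$-equivalent to $a$ across the two models. To get this, I would strengthen Lemma~\ref{lem:X-definable} by the same induction to a Hintikka-style statement. The formula $\chi^d_a$ should fix the truth values of a finite set of representatives of all depth-$d$ formulas over the finite vocabulary and the finite variable set, so that satisfying it determines the full $d$-type in any model. Finiteness of this representative set again rests on finiteness of $\tau$, $\mathcal{Q}$ and $\mathrm{dom}(h)$.
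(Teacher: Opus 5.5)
Your proposal is correct and is essentially the paper's proof: contraposition, the two contestation rules forcing $X$ to be $\equiv^d$-closed and $P$ to avoid $d$-types realized in the other model, the defender answering with $X'=\norm{\theta\lor\psi}^{\mathfrak{B},g}_x$ (legitimate because $Qx(\theta\lor\psi)$ transfers by $(d+1)$-equivalence), and the same case analysis of step~4. Your guard $\bigwedge_a\neg\chi^d_a$ and the Hintikka-style strengthening of Lemma~\ref{lem:X-definable} make explicit the cross-model property of the type formulas, which the paper uses without comment when it asserts $\norm{\psi}^{\mathfrak{A},f}_x=\emptyset$ and in its case 2(b); note that your own step $P\subseteq X'$ also needs that strengthening, and once you have it the guard is redundant.
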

\begin{proof}
    Assume contrapositively that $(\mathfrak{A},f)$ and $(\mathfrak{B},g)$ are not separable in $\mathrm{FO}(\mathcal{Q})^{d+1}$. Suppose Player $\mathbb{I}$ chooses a quantifier $Q\in\mathcal{Q}$, a variable $x\in\mathrm{VAR}$, a witness set $X\subseteq A$ and a spillover set $P\subseteq B$.
    
    We begin by observing that the first contestation rule forces the players to choose witness sets that are closed under $\equiv^d_{\mathrm{FO}(\mathcal{Q})}$ relative to their respective models and assignments and $x$, i.e., are unions of $d$-types. Otherwise, Player $\mathbb{II}$ could pick $v\in X$ and $v'\in A\setminus X$ such that $\mathfrak{A},f\frac{v}{x}\equiv^d_{\mathrm{FO}(\mathcal{Q})} \mathfrak{A},f\frac{v'}{x}$, granting them a winning strategy by the induction hypothesis (and the same applies for Player $\mathbb{I}$ contesting $X'$).

    By Corollary \ref{cor:closed-definable}, the witness set $X$ is thus defined by the disjunction of the formulas defining the $d$-types contained in $X$; formally, $X=\norm{\theta}^{\mathfrak{A},f}_x$, where $\theta:=\bigvee_{1\leq i\leq k}\chi_{a_i}^d$, the points $a_1,\dots,a_k$ are representatives of the $d$-types realized in $X$, and each $\chi^d_{a_i}$ is the defining formula of $[a_i]^d_x$ as in the proof of Lemma \ref{lem:X-definable}. Similarly, there exists a formula $\psi\in\mathrm{FO}(\mathcal{Q})^d$ that defines the closure of $P$ under $\equiv^d_{\mathrm{FO}(\mathcal{Q})}$, i.e., such that
    \[
        \norm{\psi}^{\mathfrak{B},g}_x=P\cup\{v\in B\mid
        \mathfrak{B},g\frac{v}{x}\equiv^d_{\mathrm{FO}(\mathcal{Q})}
        \mathfrak{B},g\frac{v'}{x}\text{ for some }v'\in P\}.
    \]
    By the second contestation rule, the extension of $\psi$ in $\mathfrak{A}$ is empty; otherwise there would exist a point $v\in A$ such that $\mathfrak{A},f\frac{v}{x}\equiv^d_{\mathrm{FO}(\mathcal{Q})} \mathfrak{B},g\frac{v'}{x}$ for some $v'\in P$, and Player $\mathbb{II}$ could choose the pair $(v,v')$ and obtain a winning strategy by the induction hypothesis.
    
    Since $\norm{\theta}^{\mathfrak{A},f}_x=X$ and $\norm{\psi}^{\mathfrak{A},f}_x=\emptyset$, we have $\norm{\theta\lor\psi}_x^{\mathfrak{A},f}=X$, which combined with the fact that $(A,X)\in Q$ implies that $\mathfrak{A},f\models Qx(\theta\lor \psi)$. Recalling the assumption that $(\mathfrak{A},f)$ and $(\mathfrak{B},g)$ are equivalent in $\mathrm{FO}(\mathcal{Q})^{d+1}$, we thus also have $\mathfrak{B},g\models Qx(\theta\lor\psi)$. The strategy for Player $\mathbb{II}$ is now to respond with the witness set $X'=\norm{\theta\lor\psi}_x^{\mathfrak{B},g}$, to which Player $\mathbb{I}$ has two possible responses:
    \begin{enumerate}
        \item Suppose Player $\mathbb{I}$ chooses $v'\in B\setminus X'$ and     $v\in X$ and the players swap roles. Since     $\mathfrak{A},f\frac{v}{x}\models\theta$ and     $\mathfrak{B},g\frac{v'}{x}\not\models\theta$, then     $(\mathfrak{A},f\frac{v}{x})$ and $(\mathfrak{B},g\frac{v'}{x})$     are separable in $\mathrm{FO}(\mathcal{Q})^d$. By the induction     hypothesis, the attacker, who is now Player $\mathbb{II}$, thus has     a winning strategy from this position.
        \item Suppose Player $\mathbb I$ chooses $v'\in X'$.
        \begin{enumerate}
            \item If $v'\in\norm{\psi}^{\mathfrak{B},g}_x$, then since $\norm{\psi}^{\mathfrak{B},g}_x$ is the closure of $P$ under $\equiv^d_{\mathrm{FO}(\mathcal{Q})}$, there exists a $v\in P$ such that $\mathfrak{B},g\frac{v'}{x}\equiv^d_{\mathrm{FO}(\mathcal{Q})} \mathfrak{B},g\frac{v}{x}$. The defender chooses this $v$, and the game continues from $(\mathfrak{B},\mathfrak{B},\frac{v}{x},\frac{v'}{x})$. By the induction hypothesis, Player $\mathbb{II}$ has a winning strategy from this position.
            \item If $v'\in\norm{\theta}_x^{\mathfrak{B},g}$, then $v'$         satisfies the defining formula of some $d$-type present in         $\theta$; denote it by $\chi^d_{a_i}$. Then Player $\mathbb{II}$         simply selects the point $v\in X$ that satisfies the same         formula $\chi^d_{a_i}$. Since these points belong to the same         $d$-type, we have         $\mathfrak{A},f\frac{v}{x}\equiv^d_{\mathrm{FO}(\mathcal{Q})}         \mathfrak{B},g\frac{v'}{x}$, which, by the induction hypothesis,         means Player $\mathbb{II}$ has a winning strategy from this         position.\qedhere
        \end{enumerate}
    \end{enumerate}
\end{proof}

\begin{lemma}\label{lem:equivalence-implies-game}
    If $(\mathfrak{A},f)$ and $(\mathfrak{B},g)$ are separable in $\mathrm{FO}(\mathcal{Q})^{d+1}$, then Player $\mathbb{I}$ has a winning strategy in the $\mathrm{EF}_{d+1}\{\mathcal{Q}\}(\mathfrak{A},\mathfrak{B},f,g)$-game.
\end{lemma}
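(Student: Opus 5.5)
We argue by induction on $d$ and assume the hypothesis stated before Lemma \ref{lem:game-implies-equivalence}: for depth $d$, separability and Player $\mathbb{I}$'s winning strategy coincide for all models and all assignments. Let $\varphi\in\mathrm{FO}(\mathcal{Q})^{d+1}$ separate $(\mathfrak{A},f)$ and $(\mathfrak{B},g)$, so $\mathfrak{A},f\models\varphi$ and $\mathfrak{B},g\not\models\varphi$. First we reduce $\varphi$ to a quantified formula. Since there is no primitive negation or conjunction, $\varphi$ is atomic or of the form $Q\mathbf{x}_1,\dots,\mathbf{x}_k(\varphi_1,\dots,\varphi_k)$ with each $\varphi_i\in\mathrm{FO}(\mathcal{Q})^d$. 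If $\varphi$ is atomic, the pair $(f,g)$ fails to induce a partial isomorphism, and Player $\mathbb{I}$ wins by Lemma \ref{lem:basecase}. Otherwise the outermost operator is some $Q\in\mathcal{Q}$. As in the proof of Lemma \ref{lem:game-implies-equivalence}, we handle the simple case of one bound variable, $\varphi=Qx\,\psi$ with $\psi\in\mathrm{FO}(\mathcal{Q})^d$. The general width-$k$ and tuple-type case should follow by the same argument applied componentwise; we only need the game to range over all the sets, which it implicitly does.

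The strategy for Player $\mathbb{I}$ is to choose $Q$, $x$, the witness set $X:=\norm{\psi}^{\mathfrak{A},f}_x$, which satisfies $(A,X)\in Q$ since $\mathfrak{A},f\models\varphi$, and the spillover set
\[
P:=\{v'\in\norm{\psi}^{\mathfrak{B},g}_x \mid \mathfrak{B},g\tfrac{v'}{x}\not\equiv^d_{\mathrm{FO}(\mathcal{Q})}\mathfrak{A},f\tfrac{v}{x}\text{ for all }v\in A\}.
\]
We then check Player $\mathbb{II}$'s possible replies.

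\textbf{Contestations by Player $\mathbb{II}$.} Player $\mathbb{II}$ may contest $X$ by picking $u\in X$ and $u'\in A\setminus X$. Then $\psi$ separates $(\mathfrak{A},f\frac{u}{x})$ from $(\mathfrak{A},f\frac{u'}{x})$ at depth $d$, and Player $\mathbb{I}$, who remains attacker, wins by the induction hypothesis. Player $\mathbb{II}$ may instead contest $P$ with $v'\in P$ and $v\in A$. By the definition of $P$ the two positions are $d$-inequivalent, so Player $\mathbb{I}$ again wins by induction.

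\textbf{Response $X'$.} Otherwise Player $\mathbb{II}$ answers with $X'\subseteq B$ such that $(B,X')\in Q$ and $P\subseteq X'$. Since $\mathfrak{B},g\not\models Qx\,\psi$, we get $X'\neq\norm{\psi}^{\mathfrak{B},g}_x$, and there are two cases.

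\emph{Case (a).} There is $v'\in\norm{\psi}^{\mathfrak{B},g}_x\setminus X'$. Then $v'\notin P$, so some $v\in A$ is $d$-equivalent to $v'$; since $v'$ satisfies $\psi$, this $v$ does too, so $v\in X$. Player $\mathbb{I}$ plays option (a) with $v\in X$ and $v'$. After the role swap, Player $\mathbb{II}$ is the attacker from a pair of positions that are not separable at depth $d$, so by the induction hypothesis Player $\mathbb{II}$ has no winning strategy. Hence the defender, Player $\mathbb{I}$, wins.

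\emph{Case (b).} There is $v'\in X'$ with $\mathfrak{B},g\frac{v'}{x}\not\models\psi$. Player $\mathbb{I}$ picks this $v'$. If Player $\mathbb{II}$ chooses $v\in X$, then $\psi$ separates the resulting pair at depth $d$. If Player $\mathbb{II}$ chooses $v\in P$, then $v$ satisfies $\psi$ and $v'$ does not, so $\psi$ separates $(\mathfrak{B},\frac{v}{x})$ from $(\mathfrak{B},\frac{v'}{x})$. In either case Player $\mathbb{I}$, still attacker, wins by induction.

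Player $\mathbb{I}$ also never needs to contest $X'$; he may simply refrain.

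\textbf{Main obstacle.} The delicate point is the position $(\mathfrak{B},\mathfrak{B},\frac{v}{x},\frac{v'}{x})$ in case (b). There the assignment is reset, so separation by $\psi$ must be checked relative to the empty assignment extended by $x$, whereas $\psi$ may have other free variables from $\mathrm{dom}(g)$. I would first try to resolve this by reading that position as carrying $g$, i.e.\ $(\mathfrak{B},\mathfrak{B},g\frac{v}{x},g\frac{v'}{x})$, consistent with the contestation rules. Failing that, I would argue with $d$-types only when $\psi$'s free variables are among $x$. A second, minor obstacle is the role swap in case (a). There we need the defender to win, which is exactly the contrapositive direction of the induction hypothesis: non-separability at depth $d$ means the attacker has no winning strategy, and since the game is finite and determined, the defender wins.
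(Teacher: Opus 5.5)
Your proposal is correct and follows the paper's proof step for step. It uses the same witness set $X=\norm{\psi}^{\mathfrak{A},f}_x$, the same spillover set $P$ (the $\psi$-points of $\mathfrak{B}$ whose $d$-type is not realized in $\mathfrak{A}$), the same dismissal of contestations, and the same two cases according to whether $X'$ omits a $\psi$-point (Player $\mathbb{I}$ swaps roles and defends) or contains a $\neg\psi$-point (he stays attacker).

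The reset-assignment issue you flag at $(\mathfrak{B},\mathfrak{B},\frac{v}{x},\frac{v'}{x})$ is a real imprecision in the game's definition. The paper's own proof tacitly reads that position as $(\mathfrak{B},\mathfrak{B},g\frac{v}{x},g\frac{v'}{x})$, which is exactly your proposed fix.
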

\begin{proof}
    Without loss of generality, assume that $\mathfrak{A},f\models Qx\,\varphi$ and $\mathfrak{B},g\models\neg Qx\,\varphi$ for some $Q\in\mathcal{Q}$, $x\in\mathrm{VAR}$ and $\varphi\in\mathrm{FO}(\mathcal{Q})^d$. Player $\mathbb{I}$ begins the winning strategy by choosing $Q$, the variable $x$, the witness set
    $X:=\norm{\varphi}^{\mathfrak{A},f}_x$, and the spillover set
    \[
        P:=\{v'\in\norm{\varphi}_x^{\mathfrak{B},g}\mid
        \mathfrak{B},g\frac{v'}{x}\not\equiv^d_{\mathrm{FO}(\mathcal{Q})}
        \mathfrak{A},f\frac{v}{x}\text{ for all }v\in A\}.
    \]
    Clearly $X$ is closed under $\equiv^d_{\mathrm{FO}(\mathcal{Q})}$, which means Player $\mathbb{II}$ contesting the choice of $X$ would give Player $\mathbb{I}$ a winning strategy. Moreover, if Player $\mathbb{II}$ would contest the choice of $P$, then since $\mathfrak{B},g\frac{v'}{x}\not\equiv^d_{\mathrm{FO}(\mathcal{Q})} \mathfrak{A},f\frac{v}{x}$ for all $v'\in P$ and $v\in A$, by the induction hypothesis, Player $\mathbb{I}$ would have a winning strategy.

    Player $\mathbb{II}$ must thus choose a witness set $X'$ of their own. Since $\mathfrak{B},g\models\neg Qx\,\varphi$, we know that $(B,\norm{\varphi}_x^{\mathfrak{B},g})\notin Q$ and hence $X'\neq\norm{\varphi}_x^{\mathfrak{B},g}$. We now have two cases:
    \begin{enumerate}
        \item Suppose there exists a $v'\in X'$ such that    $\mathfrak{B},g\frac{v'}{x}\models\neg\varphi$. By the definition of    $P$, we have $v'\in X'\setminus P$. Player $\mathbb{I}$ now chooses    $v'$ and Player $\mathbb{II}$ is forced to respond by choosing    $v\in X$ such that $\mathfrak{A},f\frac{v}{x}\models\varphi$ or    $v\in P$ such that $\mathfrak{B},g\frac{v}{x}\models\varphi$. Since    in both cases $v$ is separated from $v'$ by    $\varphi\in\mathrm{FO}(\mathcal{Q})^d$, by the induction hypothesis,    we conclude that the attacker, i.e.\ Player $\mathbb{I}$, has a    winning strategy from this position.
        \item Suppose there exists $v'\in B\setminus X'$ such that    $\mathfrak{B},g\frac{v'}{x}\models\varphi$. Since $P\subseteq X'$,    we have $v'\notin P$ and by the definition of $P$, we have    $\mathfrak{B},g\frac{v'}{x}\equiv^d_{\mathrm{FO}(\mathcal{Q})}    \mathfrak{A},f\frac{v}{x}$ for some $v\in A$. Hence    $\mathfrak{A},f\frac{v}{x}\models\varphi$, so $v\in X$. Player    $\mathbb{I}$ now chooses $v'$ and $v$ and becomes the defender. By    the induction hypothesis, the defender, i.e.\ Player $\mathbb{I}$,    has a winning strategy from this position.\qedhere
    \end{enumerate}
\end{proof}

Since $\mathfrak{A}$ and $\mathfrak{B}$ are separable in $\mathrm{FO}(\mathcal{Q})$ if and only if there is a $d\in\mathbb{N}$ such that $\mathfrak{A}$ and $\mathfrak{B}$ are separable in $\mathrm{FO}(\mathcal{Q})^{d}$, we now have the desired result.
\begin{theorem}\label{thm:ef-game-characterizes-equivalence}
    Player $\mathbb{I}$ has a winning strategy in the $\mathrm{EF}\{\mathcal{Q}\}(\mathfrak{A},\mathfrak{B},\emptyset,\emptyset)$-game if and only if $\mathfrak{A}$ and $\mathfrak{B}$ are separable in $\mathrm{FO}(\mathcal{Q})$.
\end{theorem}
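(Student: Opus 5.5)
The plan is to assemble the theorem from the depth-bounded results. First I would fix, for every $d\in\mathbb{N}$, the equivalence between ``Player $\mathbb{I}$ has a winning strategy in the $\mathrm{EF}_d\{\mathcal{Q}\}(\mathfrak{A},\mathfrak{B},f,g)$-game'' and ``$(\mathfrak{A},f)$ and $(\mathfrak{B},g)$ are separable in $\mathrm{FO}(\mathcal{Q})^d$''. This holds for all models and all assignments with equal domains, and I would prove it by induction on $d$. Lemma~\ref{lem:basecase} gives $d=0$. For the step, assume the statement at $d$ (the stated induction hypothesis). Lemma~\ref{lem:game-implies-equivalence} then yields the direction from game to formula, and Lemma~\ref{lem:equivalence-implies-game} the direction from formula to game, both at $d+1$.

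For the direction ($\impliedby$) of the theorem, a separating sentence $\varphi$ has some finite quantifier depth $d$. By the above, Player $\mathbb{I}$ wins the $\mathrm{EF}_d$-game from $(\mathfrak{A},\mathfrak{B},\emptyset,\emptyset)$. Such a strategy forces a win (a failed partial isomorphism check, a contestation loss, or the opponent being unable to move) within $d$ rounds. Player $\mathbb{I}$ plays the same strategy in the unbounded game, and the win occurs there too, because the rules of each round are identical.

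For ($\implies$) I would argue contrapositively. Suppose $\mathfrak{A}\equiv_{\mathrm{FO}(\mathcal{Q})}\mathfrak{B}$. Then the two structures are equivalent in $\mathrm{FO}(\mathcal{Q})^d$ for every $d$. The proof of Lemma~\ref{lem:game-implies-equivalence} describes how Player $\mathbb{II}$ should respond so that every reachable position stays unseparable at the relevant depth. I would upgrade this to full $\equiv_{\mathrm{FO}(\mathcal{Q})}$: let Player $\mathbb{II}$ keep the invariant that the current position $(\mathfrak{M},\mathfrak{N},h,h')$ satisfies $\mathfrak{M},h\equiv_{\mathrm{FO}(\mathcal{Q})}\mathfrak{N},h'$. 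An invariant position induces a partial isomorphism, so Player $\mathbb{I}$ can never win by the atomic check, and Player $\mathbb{II}$ survives forever. Since the unbounded game is won by Player $\mathbb{I}$ only by reaching a losing check or a stuck move in finitely many rounds, an infinite play counts as a win for Player $\mathbb{II}$.

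The main obstacle is keeping this invariant with full equivalence rather than depth-$d$ equivalence. Lemma~\ref{lem:game-implies-equivalence} relies on Corollary~\ref{cor:closed-definable}, which needs only finitely many $d$-types, and full $\mathrm{FO}(\mathcal{Q})$-types need not be finitely definable. My first attempt is the standard route: show that for each position, if for every $d$ the defender has a depth-$d$ response, then some single response works for all $d$. This would use that finite vocabulary and finite $\mathcal{Q}$ make each depth-$d$ type partition finite, and that these partitions refine one another, so a compactness or König-style argument applies. If this fails for infinite structures, I would fall back on the interpretation the paper's remark suggests: the game is determined by the depth-indexed versions, read as ``Player $\mathbb{I}$ wins iff he wins some $\mathrm{EF}_d$-game''. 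Under that reading, the ($\implies$) direction follows directly from the depth-bounded equivalence.
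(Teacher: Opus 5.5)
Your skeleton is the paper's own argument. It proves the depth-indexed equivalence by induction from Lemmas~\ref{lem:basecase}, \ref{lem:game-implies-equivalence} and \ref{lem:equivalence-implies-game}, then uses the fact that separability in $\mathrm{FO}(\mathcal{Q})$ means separability in some $\mathrm{FO}(\mathcal{Q})^d$. Your ($\impliedby$) direction is fine. The problem is your primary plan for ($\implies$): keeping the invariant $\mathfrak{M},h\equiv_{\mathrm{FO}(\mathcal{Q})}\mathfrak{N},h'$ via compactness or K\"onig's lemma. It cannot work, because under the reading it is meant to support, the implication is false. In that reading Player $\mathbb{I}$ must win after finitely many rounds, no bound is fixed in advance, and infinite plays go to Player $\mathbb{II}$. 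Take $\mathcal{Q}=\{\exists,\forall\}$, $\mathfrak{A}=(\omega,<)$ and $\mathfrak{B}=(\omega+\mathbb{Z},<)$, which are elementarily equivalent. Player $\mathbb{I}$ plays $\exists$, takes $X\subseteq B$ to be the $\mathbb{Z}$-part, and takes $P=\emptyset$.
\begin{itemize}
\item If Player $\mathbb{II}$ contests $X$ with some $u'=m\in\omega$, the formula ``$x$ has at least $m+1$ predecessors'' separates the new position. Player $\mathbb{I}$ then wins in finitely many further rounds by the depth-bounded equivalence.
\item Otherwise she must give a nonempty $X'\subseteq\omega$. Player $\mathbb{I}$ picks $n\in X'$ by option (b), and she must answer with some $v$ in the $\mathbb{Z}$-part. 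The same formula with $n$ in place of $m$ wins for him, after a number of rounds that depends on $n$, i.e.\ on her choice of $X'$.
\end{itemize}
So Player $\mathbb{I}$ wins the unbounded game although $\mathfrak{A}\equiv_{\mathrm{FO}(\mathcal{Q})}\mathfrak{B}$.

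This shows exactly where your K\"onig step fails. For each $d$, Player $\mathbb{II}$ does have an answer surviving $d$ further rounds, namely $X'_d=\{m\in\omega\mid m\geq N_d\}$ for a suitable threshold $N_d$. But these sets decrease to $\emptyset$, and $(\omega,\emptyset)\notin\exists$. The only answer respecting full types is the set of realizations in $\mathfrak{A}$ of the type of the $\mathbb{Z}$-elements, and $\mathfrak{A}$ omits that type, so this set is empty. Finiteness of the $d$-type partitions bounds the candidates at each level. However, a generalized quantifier is an arbitrary isomorphism-closed class with no continuity property, so nothing forces any limit of the $X'_d$ to lie in $Q$. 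Your fallback is therefore not optional. The theorem holds when ``Player $\mathbb{I}$ has a winning strategy in the $\mathrm{EF}\{\mathcal{Q}\}$-game'' is read as ``for some $d$, Player $\mathbb{I}$ wins the $\mathrm{EF}_d\{\mathcal{Q}\}$-game''; equivalently, he commits to a round bound at the outset. That is the reading the paper's one-line proof uses tacitly, and under it both directions follow at once from the depth-indexed equivalence. Drop the first attempt and state that reading explicitly.
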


\section{The Formula-Size Game for Generalized Quantifiers}

In this section, we present a game that characterizes a more fine-grained distinction between models by measuring the smallest \textit{size} of their separating formula. This is a modification of the formula-size game originally introduced for FO by Hella and Väänänen \cite{hellavaananen}. Our game extends this to $\mathrm{FO}(\mathcal{Q})$ and also considers the effect of negation on the size of the formula.

The \textbf{size} $s(\varphi)$ of an $\mathrm{FO}(\mathcal{Q})$-formula $\varphi$ is defined recursively as follows:
\begin{enumerate}
    \item $s(\varphi):=1$ for atomic $\varphi$.
    \item $s(\neg\varphi):=s(\varphi)+1$.
    \item $s(\varphi\land\psi):=s(\varphi)+s(\psi)$.
    \item $s(Qx\, \varphi):=s(\varphi)+1$ for all $Q \in \mathcal{Q}$.
\end{enumerate}

\subsection{Playing Between Classes}

We call a pair $(\mathfrak{A},f)$, where $\mathfrak{A}$ is a model and $f$ is an assignment over $\mathfrak{A}$, a $\tau$\textbf{-pair}. Consider \textit{classes} of these pairs. We say that two such classes $\mathcal{A}$ and $\mathcal{B}$ are \textbf{separable} in $\mathrm{FO}(\mathcal{Q})$ if there is a formula $\varphi\in\mathrm{FO}(\mathcal{Q})$ such that $\mathfrak{A},f\models \varphi$ for all $(\mathfrak{A},f)\in \mathcal{A}$ and $\mathfrak{B},g\not\models\varphi$ for all $(\mathfrak{B},g)\in \mathcal{B}$. Notice that $\mathcal{A}$ and $\mathcal{B}$ are possibly empty, and that the empty class is separable from any other class, including itself.

\begin{definition}
    Let $\mathcal{A}$ and $\mathcal{B}$ be classes of $\tau$-pairs and $s \in \mathbb{Z}_+$ be a \textbf{budget}. The $\mathrm{FS}_{s}\{\mathcal{Q}\}(\mathcal{A},\mathcal{B})$-game is a two-player game that starts in the position $(s,\mathcal{A},\mathcal{B})$. The $i$th round proceeds from the position $(s_i,\mathcal{A}_i,\mathcal{B}_i)$ as follows:
    \begin{enumerate}
        \item If there is an atomic $\mathrm{FO}(\mathcal{Q})$-formula that separates $\mathcal{A}_i$ and $\mathcal{B}_i$, then the game ends and Player $\mathbb{I}$ wins.
        \item Otherwise, if $s_i=1$, then the game ends and Player $\mathbb{II}$ wins.
        \item If neither of the above conditions holds, then Player $\mathbb{I}$ chooses one of the following three options:
        \begin{enumerate}
            \item \textbf{Swap classes (negation).} A new round begins from the position $(s_i-1,\mathcal{B}_i,\mathcal{A}_i)$.
            \item \textbf{Right splitting move (conjunction).} Player $\mathbb{I}$ chooses $u,v\in \mathbb{Z}_+$ such that $u+v=s_i$, and chooses (possibly overlapping) sets $\mathcal{C},\mathcal{D}\subseteq \mathcal{B}_i$ such that $\mathcal{C}\cup\mathcal{D}=\mathcal{B}_i$. Player $\mathbb{II}$ then responds by choosing whether the next round starts from the position $(u,\mathcal{A}_i,\mathcal{C})$ or from the position $(v,\mathcal{A}_i,\mathcal{D})$.
            \item \textbf{Supplementing move (quantifier).} Player $\mathbb{I}$ first chooses a quantifier $Q \in \mathcal{Q}$ and a variable symbol $x\in \text{VAR}$. Player $\mathbb{I}$ then chooses a function $P:\mathcal{A}_i\cup\mathcal{B}_i\rightarrow \bigcup\{\mathcal{P}(C)\mid (\mathfrak{C},h)\in\mathcal{A}_i\cup\mathcal{B}_i\}$ such that $P(\mathfrak{C},h)\in \mathcal{P}(C)$ and 
            \begin{align*}
                &(A,P(\mathfrak{A},f))\in Q \text{ for all } (\mathfrak{A},f)\in \mathcal{A}_i\text{ and }\\
                &(B,P(\mathfrak{B},g))\notin Q \text{ for all }(\mathfrak{B},g)\in \mathcal{B}_i.
            \end{align*}
            (If such a $P$ does not exist, Player $\mathbb{I}$ cannot choose this move.) The next round then starts from the position $(s_i-1,\mathcal{C}^+,\mathcal{C}^-)$, where
            \begin{align*}
                &\mathcal{C}^+:=\{(\mathfrak{C},h \frac{v}{x})\mid (\mathfrak{C},h)\in \mathcal{A}_i\cup \mathcal{B}_i \text{ and }v\in P(\mathfrak{C},h)\} \text{ and}\\
                &\mathcal{C}^-:=\{(\mathfrak{C},h \frac{v}{x})\mid (\mathfrak{C},h)\in \mathcal{A}_i\cup \mathcal{B}_i \text{ and }v\in C\setminus P(\mathfrak{C},h)\}.
            \end{align*}
        \end{enumerate}
    \end{enumerate}
\end{definition}

Intuitively, Player $\mathbb{I}$ wants to move precisely those models (from both classes) that satisfy the formula $Qx\, \varphi$ to the left and those models that don't to the right. So, for each model, he picks a subset of the domain that can be captured by the quantifier; models extended with an atom from within that subset go to the left and those extended with an atom from outside that subset go to the right. If $Qx\, \varphi$ indeed separates the classes, Player $\mathbb{I}$ can pick $P$ such that the split works as intended. And if Player $\mathbb{I}$ can pick $P$ such that the split works as intended (i.e., if Player $\mathbb{I}$ has a winning strategy in the game), by the definitions of the $\mathcal{C}$-classes and $P$, there must be a formula that separates the classes. See the proof below.

\begin{theorem}\label{thm:width-game}
    Player $\mathbb{I}$ has a winning strategy in the $\mathrm{FS}_s\{\mathcal{Q}\}(\mathcal{A},\mathcal{B})$-game if and only if $\mathcal{A}$ and $\mathcal{B}$ are separable by a formula of $\mathrm{FO}(\mathcal{Q})$ of size $\leq s$.
\end{theorem}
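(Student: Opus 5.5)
We prove both directions simultaneously by induction on the budget $s$, for all classes $\mathcal{A},\mathcal{B}$ at once. Throughout we use the size measure above, so the relevant formulas are built from atoms by $\neg$, $\land$ and $Qx$ with $Q\in\mathcal{Q}$. Each move corresponds to one of these constructors and costs exactly what the constructor adds to $s(\cdot)$.

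\textbf{Base case $s=1$.} A formula of size $1$ is atomic. Player $\mathbb{I}$ wins the $\mathrm{FS}_1$-game exactly when the game ends in step 1, i.e.\ when some atomic formula separates $\mathcal{A}$ and $\mathcal{B}$. Note that the empty class is trivially separated by any atom, so degenerate positions are harmless.

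\textbf{Game $\Rightarrow$ formula at budget $s+1$.} Suppose Player $\mathbb{I}$ wins from $(s+1,\mathcal{A},\mathcal{B})$. If he wins immediately via step 1, an atom of size $1\le s+1$ separates the classes. Otherwise we examine his first move.
\begin{itemize}
\item For a swap, he wins $(s,\mathcal{B},\mathcal{A})$. The induction hypothesis gives $\varphi$ of size $\le s$ that is true on $\mathcal{B}$ and false on $\mathcal{A}$, so $\neg\varphi$ has size $\le s+1$ and separates $\mathcal{A}$ from $\mathcal{B}$.
\item For a right split with $u+v=s+1$ and $\mathcal{C}\cup\mathcal{D}=\mathcal{B}$, he wins both $(u,\mathcal{A},\mathcal{C})$ and $(v,\mathcal{A},\mathcal{D})$, since Player $\mathbb{II}$ chooses the position. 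The induction hypothesis (both $u,v\le s$) gives $\varphi,\psi$, and $\varphi\land\psi$ has size $\le u+v$. It is true on $\mathcal{A}$, and every member of $\mathcal{B}$ lies in $\mathcal{C}$ or $\mathcal{D}$, so it falsifies one conjunct.
\item For a supplementing move with $Q,x,P$, he wins $(s,\mathcal{C}^+,\mathcal{C}^-)$. The induction hypothesis gives $\varphi$ of size $\le s$ with $\varphi$ true on $\mathcal{C}^+$ and false on $\mathcal{C}^-$. Since these two classes together exhaust all extensions $(\mathfrak{C},h\frac{v}{x})$, we get $\norm{\varphi}^{\mathfrak{C},h}_x=P(\mathfrak{C},h)$ for every $(\mathfrak{C},h)\in\mathcal{A}\cup\mathcal{B}$. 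By the choice of $P$, the formula $Qx\,\varphi$, of size $\le s+1$, therefore separates.
\end{itemize}

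\textbf{Formula $\Rightarrow$ game.} Let $\varphi$ separate the classes with $s(\varphi)\le s+1$, and induct on the structure of $\varphi$. If $\varphi$ is atomic, Player $\mathbb{I}$ wins at step 1.
\begin{itemize}
\item If $\varphi=\neg\psi$, he swaps. We must check that budget monotonicity holds: a formula of size $\le s$ is in particular one of size $\le s'$ for any $s'\ge s$. The induction hypothesis then applies to $\psi$ at budget $s$.
\item If $\varphi=\psi_1\land\psi_2$, he splits with $u=s(\psi_1)$ and $v=s+1-u\ge s(\psi_2)$. He takes $\mathcal{C}=\{(\mathfrak{B},g)\in\mathcal{B}\mid\mathfrak{B},g\not\models\psi_1\}$ and $\mathcal{D}$ the remainder, which falsify $\psi_2$.
\item If $\varphi=Qx\,\psi$, he chooses $P(\mathfrak{C},h):=\norm{\psi}^{\mathfrak{C},h}_x$. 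This is legal by the semantics, and $\psi$ then separates $\mathcal{C}^+$ from $\mathcal{C}^-$ with size $\le s$.
\end{itemize}

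\textbf{Main obstacle.} The main care point is the interaction of budgets with early termination. The step-1 atomic check must not block a strategy, and the converse uses the fact that a winning strategy at a smaller budget remains winning at a larger one. I would handle this by having the induction statement quantify over $\le s$ in both directions. Minor subtleties remain:
\begin{itemize}
\item Splits require $u,v\ge1$, which holds since every formula has size $\ge1$.
\item The case where some class is empty makes the split or $P$ trivially available.
\item The paper's grammar takes $\neg,\land$ as quantifiers $Q_\neg,Q_\land$. For size purposes I treat them as the primitive connectives in the size definition, which is consistent with the game moves.
\end{itemize}
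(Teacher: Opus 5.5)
Your proof is correct and follows essentially the same route as the paper: strong induction on the budget, with the swap, right-splitting and supplementing moves matched to $\neg$, $\land$ and $Qx\,\varphi$ in both directions, and the key observation that separation of $\mathcal{C}^+$ from $\mathcal{C}^-$ forces $\norm{\varphi}^{\mathfrak{C},h}_x=P(\mathfrak{C},h)$. Your side remarks on empty classes are not accurate as stated: an empty class is separated by \emph{any} atom only if the other class is empty too, and a legal $P$ is not automatically available. Nothing in the argument uses them, though, since the induction hypothesis covers empty classes like any others.
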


\begin{proof}

    \textbf{Base case.} Let $s=1$. By definition, Player $\mathbb{I}$ wins the $\mathrm{FS}_1\{\mathcal{Q}\}(\mathcal{A},\mathcal{B})$-game if and only if there is an atomic formula $\varphi$ (which is thus of size $1$) that separates $\mathcal{A}$ and $\mathcal{B}$.

    Suppose now that $s > 1$ and that the statement holds for all $l<s$.

    \textbf{Induction case: negation.} Suppose Player $\mathbb{I}$ has a winning strategy for the $\mathrm{FS}_{s}\{Q\}(\mathcal{A},\mathcal{B})$-game that begins by swapping the classes. This is true if and only if Player $\mathbb{I}$ has a winning strategy in the $\mathrm{FS}_{s-1}\{Q\}(\mathcal{B},\mathcal{A})$-game. By the induction hypothesis, this is equivalent to $\mathcal{B}$ and $\mathcal{A}$ being separated by some formula $\varphi$ of size $\leq s-1$. By the definition of separation, this is equivalent to $\mathcal{A}$ and $\mathcal{B}$ being separated by $\neg\varphi$, which is of size $\leq s$.

    \textbf{Induction case: conjunction.} We first prove the $\implies$ direction. Suppose Player $\mathbb{I}$ has a winning strategy for the $s$-game that begins by choosing $u,v\in\mathbb{Z}_+$ such that $u+v=s$ and $\mathcal{C},\mathcal{D}\subseteq \mathcal{B}$ such that $\mathcal{C}\cup\mathcal{D}=\mathcal{B}$. Since the strategy is winning, Player $\mathbb{I}$ has a winning strategy in both $\mathrm{FS}_u\{\mathcal{Q}\}(\mathcal{A},\mathcal{C})$ and $\mathrm{FS}_v\{\mathcal{Q}\}(\mathcal{A},\mathcal{D})$. By the induction hypothesis, there thus exist formulas $\varphi$ and $\psi$ such that $s(\varphi)\leq u$ and $s(\psi)\leq v$, and such that $\mathcal{A}$ and $\mathcal{C}$ are separated by $\varphi$ and $\mathcal{A}$ and $\mathcal{D}$ are separated by $\psi$. By the definition of separation, this means $\mathfrak{A},f\models\varphi$ and $\mathfrak{A},f\models\psi$, and hence $\mathfrak{A},f\models\varphi\land\psi$ for all $(\mathfrak{A},f)\in \mathcal{A}$. Moreover, $\mathfrak{B},g\not\models\varphi$ for all $(\mathfrak{B},g)\in \mathcal{C}$ and $\mathfrak{B},g\not\models\psi$ for all $(\mathfrak{B},g)\in \mathcal{D}$. Thus $\mathfrak{B},g\not\models\varphi\land\psi$ for all $(\mathfrak{B},g)\in\mathcal{B}$. This means $\mathcal{A}$ and $\mathcal{B}$ are separated by $\varphi\land\psi$ of length $s(\varphi\land\psi)=s(\varphi)+s(\psi)\leq u+v=s$.

    We then prove the $\impliedby$ direction. Suppose $\varphi\land\psi$ of size $\leq s$ separates $\mathcal{A}$ and $\mathcal{B}$. Let $\mathcal{C}:=\{(\mathfrak{B},g)\in \mathcal{B}\mid \mathfrak{B},g\not\models \varphi\}$ and $\mathcal{D}:=\{(\mathfrak{B},g)\in \mathcal{B}\mid \mathfrak{B},g\not\models \psi\}$. Since $\mathfrak{B},g\not\models\varphi\land\psi$ for all $(\mathfrak{B},g)\in \mathcal{B}$, we have $\mathcal{B}=\mathcal{C}\cup\mathcal{D}$. Moreover, since $\mathfrak{A},f\models \varphi\land\psi$ for all $(\mathfrak{A},f)\in\mathcal{A}$, we see that $\varphi$ separates $\mathcal{A}$ and $\mathcal{C}$ and $\psi$ separates $\mathcal{A}$ and $\mathcal{D}$. Also, since $s(\varphi\land\psi)\leq s$, there exist $u,v\in\mathbb{Z}_+$ such that $u\geq s(\varphi)$, $v\geq s(\psi)$ and $u+v=s$. By the induction hypothesis, Player $\mathbb{I}$ has a winning strategy in the games $\mathrm{FS}_u(\mathcal{A},\mathcal{C})$ and $\mathrm{FS}_v(\mathcal{A},\mathcal{D})$. Player $\mathbb{I}$ thus has a winning strategy in the $\mathrm{FS}_s\{\mathcal{Q}\}(\mathcal{A},\mathcal{B})$-game by choosing $u$, $v$, $\mathcal{C}$ and $\mathcal{D}$ as his first move.

    \textbf{Induction case: quantifier.} We first prove the $\implies$ direction. Suppose Player $\mathbb{I}$ has a winning strategy for the $s$-game that begins by choosing $Q \in \mathcal{Q}$, a variable $x\in\mathrm{VAR}$ and a function $P$, inducing the sets $\mathcal{C}^+$ and $\mathcal{C}^-$. Since Player $\mathbb{I}$ now has a winning strategy for the $\mathrm{FS}_{s-1}\{\mathcal{Q}\}(\mathcal{C}^+,\mathcal{C}^-)$-game, by the induction hypothesis, there exists a formula $\varphi$ of size $\leq s-1$ that separates $\mathcal{C}^+$ and $\mathcal{C}^-$.

    We first show that $ P(\mathfrak{C},h)=\norm{\varphi}^{\mathfrak{C},h}_x$ for all $(\mathfrak{C},h)\in \mathcal{A}\cup\mathcal{B}$. If $v \in P(\mathfrak{C},h)$, then $(\mathfrak{C},h\frac{v}{x})\in\mathcal{C}^+$ by definition, so $\mathfrak{C},h\frac{v}{x}\models \varphi$ by separation. Conversely, if $v \notin P(\mathfrak{C},h)$, then $(\mathfrak{C},h\frac{v}{x})\in\mathcal{C}^-$ by definition, so $\mathfrak{C},h\frac{v}{x}\not\models \varphi$ by separation. Together, we thus have $v\in P(\mathfrak{C},h)\iff \mathfrak{C},h\frac{v}{x}\models \varphi$, establishing the desired equality.

    It follows that for all $(\mathfrak{A},f)\in\mathcal{A}$, we have $P(\mathfrak{A},f)=\norm{\varphi}^{\mathfrak{A},f}_x$, and since $(A,P(\mathfrak{A},f))\in Q$ by the definition of $P$, we conclude $\mathfrak{A},f\models Qx\varphi$. Similarly, for all $(\mathfrak{B},g)\in\mathcal{B}$, we have $P(\mathfrak{B},g)=\norm{\varphi}^{\mathfrak{B},g}_x$, and since $(B,P(\mathfrak{B},g))\notin Q$ by the definition of $P$, we conclude $\mathfrak{B},g\not\models Qx\varphi$. Thus $Qx\varphi$ of size $\leq s$ separates $\mathcal{A}$ and $\mathcal{B}$.
    
    We then prove the $\impliedby$ direction. Suppose $Qx\varphi$ of size $\leq s$ separates $\mathcal{A}$ and $\mathcal{B}$. Player $\mathbb{I}$ chooses $P$ such that $P(\mathfrak{C},h):=\norm{\varphi}^{\mathfrak{C},h}_x$ for all $(\mathfrak{C},h)\in \mathcal{A}\cup\mathcal{B}$. This is a valid move, since for all $(\mathfrak{A},f)\in\mathcal{A}$, we have $\mathfrak{A},f\models Qx\varphi$, so $(A,P(\mathfrak{A},f))\in Q$, and for all $(\mathfrak{B},g)\in\mathcal{B}$, we have $\mathfrak{B},g\not\models Qx\varphi$, so $(B,P(\mathfrak{B},g))\notin Q$.
    
    The game continues from the position $(s-1,\mathcal{C}^+,\mathcal{C}^-)$. Now, for any $(\mathfrak{C},h)\in\mathcal{A}\cup\mathcal{B}$ and $v \in C$, if $v\in P(\mathfrak{C},h)$, then $\mathfrak{C},h\frac{v}{x}\models \varphi$ and $(\mathfrak{C},h\frac{v}{x})\in \mathcal{C}^+$, and if $v\notin P(\mathfrak{C},h)$, then $\mathfrak{C},h\frac{v}{x}\not\models \varphi$ and $(\mathfrak{C},h\frac{v}{x})\in \mathcal{C}^-$. Thus $\mathcal{C}^+$ and $\mathcal{C}^-$ are separated by $\varphi$, and since $\varphi$ is of size $\leq s-1$, by the induction hypothesis, Player $\mathbb{I}$ has a winning strategy from this position.
\end{proof}

\subsection{Playing Between Models}

There are multiple ways to interpret the statement ``$\mathcal{A}$ and $\mathcal{B}$ are separable'', and the one we have used so far results in a rather strong condition. In this subsection, we investigate an alternative, weaker notion of separability. We say that two classes of $\tau$-pairs $\mathcal{A}$ and $\mathcal{B}$ are \textbf{weakly separable} in $\mathrm{FO}(\mathcal{Q})$ if for all $(\mathfrak{A},f)\in \mathcal{A}$ and $(\mathfrak{B},g)\in \mathcal{B}$, there is a formula $\varphi\in\mathrm{FO}(\mathcal{Q})$ such that $\mathfrak{A},f\models \varphi$ and $\mathfrak{B},g\not\models\varphi$. It is not hard to see that when the classes are infinite, separability implies weak separability but weak separability does not necessarily imply separability. 
\begin{example}
    Let $\mathcal{Q}=\{\exists,\forall\}$ and $\tau=\{P\}$, where $P$ is unary. For all $n\in \mathbb{Z}_+$, let $\mathfrak{A}_n$ be the model with domain $\mathbb{N}$ such that $\mathfrak{A}_n\models \exists_{=n}x\ P(x)$. (Note that $\exists_{=n}$ can be expressed using ordinary first-order quantifiers.) Clearly the classes $\{\mathfrak{A}_n\mid n\text{ is odd}\}$ and $\{\mathfrak{A}_n\mid n\text{ is even}\}$ are weakly separable but not separable.
\end{example}

The reason why we played with possibly infinite model classes in the previous section was because we wanted to characterize specifically the stronger notion of separability. If we restrict to the finite case, these two notions of separability coincide. One way of showing this is simply to define a naíve version of the formula-size game where Player $\mathbb{II}$ first picks the model pair to separate, and the game is then played on singletons formed from that pair.
\begin{definition}\label{weak_game}
    Let $\mathcal{A}$ and $\mathcal{B}$ be classes of $\tau$-pairs. The $\mathrm{FS}^*_{s}\{\mathcal{Q}\}(\mathcal{A},\mathcal{B})$-game is a two-player game that starts in the position $(s,\mathcal{A},\mathcal{B})$ as follows:
    \begin{enumerate}
        \item Player $\mathbb{I}\mathbb{I}$ chooses $(\mathfrak{A},f)\in\mathcal{A}$ and $(\mathfrak{B},g)\in  \mathcal{B}$.
        \item Then, the $\mathrm{FS}_s\{\mathcal{Q}\}(\{(\mathfrak{A},f)\},\{(\mathfrak{B},g)\})$-game is played (with Player $\mathbb{I}$ making the first move) to determine the winner.
    \end{enumerate}
\end{definition}

\begin{corollary}\label{cor:weak-game}
    Player $\mathbb{I}$ has a winning strategy in the $\mathrm{FS}^*_s\{\mathcal{Q}\}(\mathcal{A},\mathcal{B})$-game if and only if $\mathcal{A}$ and $\mathcal{B}$ are weakly separable by formulas of $\mathrm{FO}(\mathcal{Q})$ of size $\leq s$.
\end{corollary}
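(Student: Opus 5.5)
The proof unfolds the two stages of the $\mathrm{FS}^*_s\{\mathcal{Q}\}(\mathcal{A},\mathcal{B})$-game and applies Theorem \ref{thm:width-game} to singleton classes. The central observation is that for single $\tau$-pairs, separability of the classes $\{(\mathfrak{A},f)\}$ and $\{(\mathfrak{B},g)\}$ by a formula of size $\leq s$ means exactly that some $\varphi$ with $s(\varphi)\leq s$ satisfies $\mathfrak{A},f\models\varphi$ and $\mathfrak{B},g\not\models\varphi$. This is precisely the pairwise condition in the definition of weak separability, read with the size bound, that is, ``for all pairs there is a formula of size $\leq s$''.

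For the ($\implies$) direction, assume Player $\mathbb{I}$ has a winning strategy in the $\mathrm{FS}^*_s$-game, and fix arbitrary $(\mathfrak{A},f)\in\mathcal{A}$ and $(\mathfrak{B},g)\in\mathcal{B}$. Player $\mathbb{II}$ may choose this pair in step 1, so the strategy restricted to that branch is a winning strategy in the $\mathrm{FS}_s\{\mathcal{Q}\}(\{(\mathfrak{A},f)\},\{(\mathfrak{B},g)\})$-game. By Theorem \ref{thm:width-game}, the two singletons are then separated by a formula of size $\leq s$. Since the pair was arbitrary, $\mathcal{A}$ and $\mathcal{B}$ are weakly separable by formulas of size $\leq s$.

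For the ($\impliedby$) direction, assume weak separability by formulas of size $\leq s$. Whatever pair Player $\mathbb{II}$ chooses in step 1, there is a formula $\varphi$ of size $\leq s$ separating the two singletons. By Theorem \ref{thm:width-game}, Player $\mathbb{I}$ then has a winning strategy in the corresponding $\mathrm{FS}_s$-game. Choosing such a strategy for each possible choice of Player $\mathbb{II}$ gives a winning strategy for Player $\mathbb{I}$ in the whole $\mathrm{FS}^*_s$-game.

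The only point needing care is the degenerate case where $\mathcal{A}$ or $\mathcal{B}$ is empty. Then Player $\mathbb{II}$ cannot move in step 1, so by the standing convention that a player unable to move loses, Player $\mathbb{I}$ wins. At the same time the weak separability condition holds vacuously, so both sides of the equivalence are true. I do not expect any real obstacle beyond stating this convention explicitly; everything else is a direct application of Theorem \ref{thm:width-game}.
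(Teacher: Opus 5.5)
Your proposal is correct and takes essentially the same route as the paper. The paper proves the corollary by a chain of equivalences that unfolds Player $\mathbb{II}$'s initial choice of a pair and applies Theorem~\ref{thm:width-game} to the resulting singleton classes; your explicit treatment of the case where $\mathcal{A}$ or $\mathcal{B}$ is empty is a small addition that the paper leaves implicit, since there its ``for each pair'' statements hold vacuously.
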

\begin{proof}
    The following statements are equivalent, with the $2.\iff 3.$ equivalence being an implication of Theorem \ref{thm:width-game}:
    \begin{enumerate}
        \item Player $\mathbb{I}$ has a winning strategy in the $\mathrm{FS}^*_s\{\mathcal{Q}\}(\mathcal{A},\mathcal{B})$-game.
        \item For each $(\mathfrak{A},f)\in \mathcal{A}$ and $(\mathfrak{B},g)\in \mathcal{B}$, Player $\mathbb{I}$ has a winning strategy for the $\mathrm{FS}_s\{\mathcal{Q}\}(\{(\mathfrak{A},f)\},\{(\mathfrak{B},g)\})$-game.
        \item For each $(\mathfrak{A},f)\in \mathcal{A}$ and $(\mathfrak{B},g)\in \mathcal{B}$, the classes $\{(\mathfrak{A},f)\}$ and $\{(\mathfrak{B},g)\}$ are separable by a formula of $\mathrm{FO}(\mathcal{Q})$ of size $\leq s$.
        \item For each $(\mathfrak{A},f)\in \mathcal{A}$ and $(\mathfrak{B},g)\in \mathcal{B}$, there exists a formula $\varphi$ of $\mathrm{FO}(\mathcal{Q})$ of size $\leq s$ such that $\mathfrak{A},f\models\varphi$ and $\mathfrak{B},g\not\models\varphi$.
        \item $\mathcal{A}$ and $\mathcal{B}$ are weakly separable by formulas of $\mathrm{FO}(\mathcal{Q})$ of size $\leq s$.\qedhere
    \end{enumerate}
\end{proof}

\begin{theorem}\label{thm:weak-strong-equivalence}
    Let $\mathcal{A}$ and $\mathcal{B}$ be finite classes of $\tau$-pairs. Then they are separable in $\mathrm{FO}(\mathcal{Q})$ if and only if they are weakly separable in $\mathrm{FO}(\mathcal{Q})$.
\end{theorem}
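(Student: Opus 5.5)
The forward direction (separability implies weak separability) is immediate and holds without any finiteness assumption. If a single formula $\varphi$ separates $\mathcal{A}$ and $\mathcal{B}$, then the same $\varphi$ works for every pair $(\mathfrak{A},f)\in\mathcal{A}$, $(\mathfrak{B},g)\in\mathcal{B}$.

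For the converse, I will build the uniform separating formula explicitly as a Boolean combination of the pairwise separators. Assume $\mathcal{A}$ and $\mathcal{B}$ are finite and weakly separable. For each $(\mathfrak{A},f)\in\mathcal{A}$ and $(\mathfrak{B},g)\in\mathcal{B}$, fix a formula $\varphi_{(\mathfrak{A},f),(\mathfrak{B},g)}$ true in $(\mathfrak{A},f)$ and false in $(\mathfrak{B},g)$. Then set
\[
    \Phi:=\bigvee_{(\mathfrak{A},f)\in\mathcal{A}}\ \bigwedge_{(\mathfrak{B},g)\in\mathcal{B}}\varphi_{(\mathfrak{A},f),(\mathfrak{B},g)}.
\]
This is a finite formula because both classes are finite. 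Each $(\mathfrak{A},f)\in\mathcal{A}$ satisfies its own inner conjunction, so it satisfies $\Phi$. Each $(\mathfrak{B},g)\in\mathcal{B}$ falsifies every disjunct, namely the disjunct indexed by $(\mathfrak{A},f)$ via its conjunct $\varphi_{(\mathfrak{A},f),(\mathfrak{B},g)}$. Hence $\Phi$ separates $\mathcal{A}$ and $\mathcal{B}$.

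I also need to dispose of edge cases. If $\mathcal{A}$ is empty, the classes are separable by the convention stated earlier. If $\mathcal{B}$ is empty, we need some formula true in all of $\mathcal{A}$; an atomic formula $x=x$ serves this, provided its variable is interpreted. If the assignments in $\mathcal{A}$ have empty domain, I instead take $\varphi\lor\neg\varphi$ for any sentence $\varphi$, or treat this case by noting that separability from the empty class is declared trivial.

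The main obstacle is a syntactic subtlety. In this paper $\mathrm{FO}(\mathcal{Q})$ is officially generated only from atoms and quantifiers in $\mathcal{Q}$, while $\land$ and $\neg$ are identified with the quantifiers $Q_\land$ and $Q_\neg$. The size measure in the formula-size game, however, treats $\neg$ and $\land$ as available connectives, and the proof of Lemma~\ref{lem:X-definable} already freely uses finite conjunctions, disjunctions and negations. I will follow that established usage: I treat Boolean connectives as available and express $\lor$ via $\neg$ and $\land$. I will remark that this is the same convention under which Corollary~\ref{cor:closed-definable} was proved.

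Finally, the free variables of the $\varphi$'s must lie in the domains of the relevant assignments. Since all formulas are evaluated under assignments covering their free variables, I will assume, as the paper implicitly does for classes of $\tau$-pairs, that the assignments in $\mathcal{A}\cup\mathcal{B}$ share a common domain. Under that assumption $\Phi$ is evaluable throughout, and the argument goes through.
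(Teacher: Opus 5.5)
Your proof is correct and is essentially the paper's proof. The forward direction is immediate, and for the converse the paper builds exactly the formula $\bigvee_{(\mathfrak{A},f)\in\mathcal{A}}\bigwedge_{(\mathfrak{B},g)\in\mathcal{B}}\varphi^{\mathfrak{A},f}_{\mathfrak{B},g}$ and verifies it the same way. Your remarks on empty classes, on treating $\land$ and $\neg$ as available connectives, and on the assignments sharing a common domain are careful additions that the paper leaves implicit.
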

\begin{proof}
    If $\mathcal{A}$ and $\mathcal{B}$ are separable by a formula $\varphi\in\mathrm{FO}(\mathcal{Q})$, then they are clearly weakly separable in $\mathrm{FO}(\mathcal{Q})$, since choosing $\varphi$ for each pair suffices.
    
    Suppose then that $\mathcal{A}$ and $\mathcal{B}$ are weakly separable in $\mathrm{FO}(\mathcal{Q})$. For each $(\mathfrak{A},f)\in \mathcal{A}$ and $(\mathfrak{B},g)\in\mathcal{B}$, let $\varphi^{\mathfrak{A},f}_{\mathfrak{B},g}$ be a formula that separates the $\tau$-pairs. Consider now the disjunction
    \[
        \psi:=\bigvee_{(\mathfrak{A},f)\in \mathcal{A}}\bigwedge_{(\mathfrak{B},g)\in\mathcal{B}}\varphi^{\mathfrak{A},f}_{\mathfrak{B},g},
    \]
    which is a finitary formula of $\mathrm{FO}(\mathcal{Q})$ by the finiteness of $\mathcal{A}$ and $\mathcal{B}$. Since by the definition of weak separation each $(\mathfrak{A},f)\in\mathcal{A}$ satisfies $\varphi^{\mathfrak{A},f}_{\mathfrak{B},g}$ for every $(\mathfrak{B},g)\in\mathcal{B}$, we know that each $(\mathfrak{A},f)\in\mathcal{A}$ satisfies the conjunction $\bigwedge_{(\mathfrak{B},g) \in \mathcal{B}}\varphi^{\mathfrak{A},f}_{\mathfrak{B},g}$ and hence the disjunction $\psi$. Conversely, for each $(\mathfrak{B},g)\in\mathcal{B}$ and $(\mathfrak{A},f)\in\mathcal{A}$, the formula $\varphi^{\mathfrak{A},f}_{\mathfrak{B},g}$ is false in $(\mathfrak{B},g)$, and hence the conjunction $\bigwedge_{(\mathfrak{B},g) \in \mathcal{B}}\varphi^{\mathfrak{A},f}_{\mathfrak{B},g}$ is false in $(\mathfrak{B},g)$. Since this holds for every $(\mathfrak{A},f)\in\mathcal{A}$, the formula $\psi$ is false in $(\mathfrak{B},g)$. Thus $\psi$ separates $\mathcal{A}$ and $\mathcal{B}$.
\end{proof}

The game we used to prove this theorem is a rather heavy hammer, and does not give Player $\mathbb{II}$ much agency. We now present a more natural version of the game that is played directly between a pair of models. This game is more suited for showing lower-bound separability results between individual models.

\begin{definition}
    Let $(\mathfrak{A},f)$ and $(\mathfrak{B},g)$ be $\tau$-pairs and
    $s \in \mathbb{Z}_+$ be a budget. The
    $\mathrm{FS}_s\{\mathcal{Q}\}(\mathfrak{A},f,\mathfrak{B},g)$-game is a
    two-player game that starts in the position
    $(s,\mathfrak{A},f,\mathfrak{B},g)$. The $i$th round proceeds from
    the position $(s_i,\mathfrak{M}_i,h_i,\mathfrak{N}_i,h'_i)$ as follows:
    \begin{enumerate}
        \item If there is an atomic $\mathrm{FO}(\mathcal{Q})$-formula
        that separates $(\mathfrak{M}_i,h_i)$ and
        $(\mathfrak{N}_i,h'_i)$, then the game ends and Player
        $\mathbb{I}$ wins.
        \item Otherwise, if $s_i=1$, then the game ends and Player
        $\mathbb{II}$ wins.
        \item If neither of the above conditions holds, then Player
        $\mathbb{I}$ chooses one of the following three options:
        \begin{enumerate}
            \item \textbf{Swap models (negation).} A new round begins from the position $(s_i-1,\mathfrak{N}_i,h'_i,\mathfrak{M}_i,h_i)$.
            \item \textbf{Split budget (conjunction).} Player
            $\mathbb{I}$ chooses $u,v\in\mathbb{Z}_+$ such that
            $u+v=s_i$. Player $\mathbb{II}$ then responds by
            choosing whether the next round starts from the
            position $(u,\mathfrak{M}_i,h_i,\mathfrak{N}_i,h'_i)$
            or from the position
            $(v,\mathfrak{M}_i,h_i,\mathfrak{N}_i,h'_i)$.
            \item \textbf{Supplementing move (quantifier).} Player
            $\mathbb{I}$ first chooses a quantifier
            $Q\in\mathcal{Q}$ and a variable symbol
            $x\in\mathrm{VAR}$. Player $\mathbb{I}$ then chooses
            subsets $M'\subseteq M$ and $N'\subseteq N$ such that
            $(M,M')\in Q$ and $(N,N')\notin Q$. (If no such
            choice exists, Player $\mathbb{I}$ cannot choose this
            move.) Player $\mathbb{II}$ then picks one of the
            following:
            \begin{itemize}
                \item An element $a\in M'$ and an element
                $b\in N\setminus N'$. The next position is
                $(s_i-1,\mathfrak{M}_i,h_i\frac{a}{x},
                \mathfrak{N}_i,h'_i\frac{b}{x})$.
                \item An element $a\in M\setminus M'$ and an
                element $b\in N'$. The next position is
                $(s_i-1,\mathfrak{N}_i,h'_i\frac{b}{x},
                \mathfrak{M}_i,h_i\frac{a}{x})$.
                \item Two elements $a\in M'$ and $a'\in M\setminus
                M'$. The next position is
                $(s_i-1,\mathfrak{M}_i,h_i\frac{a}{x},
                \mathfrak{M}_i,h_i\frac{a'}{x})$.
                \item Two elements $b\in N'$ and $b'\in N\setminus
                N'$. The next position is
                $(s_i-1,\mathfrak{N}_i,h'_i\frac{b}{x},
                \mathfrak{N}_i,h'_i\frac{b'}{x})$.
            \end{itemize}
        \end{enumerate}
    \end{enumerate}
\end{definition}

Note that in this game, Player $\mathbb{I}$ never benefits from choosing the budget splitting move. This is due to the fact that if $(\mathfrak{A},f)$ and $(\mathfrak{B},g)$ are separable by a formula $\varphi\land\psi$, then they are also separable by either $\varphi$ or $\psi$.

\begin{lemma}\label{lem:model-game-sound}
    If $(\mathfrak{A},f)$ and $(\mathfrak{B},g)$ are separable by a formula of $\mathrm{FO}(\mathcal{Q})$ of size $\leq s$, then Player $\mathbb{I}$ has a winning strategy in the $\mathrm{FS}_s\{\mathcal{Q}\}(\mathfrak{A},f,\mathfrak{B},g)$-game.
\end{lemma}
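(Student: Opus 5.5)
We argue by strong induction on $s$, following the structure of the proof of Theorem \ref{thm:width-game}. We prove the statement simultaneously for all $\tau$-pairs, since the recursive positions of the game involve different models and assignments. Fix a separating formula $\varphi$ with $s(\varphi)\leq s$, $\mathfrak{A},f\models\varphi$ and $\mathfrak{B},g\not\models\varphi$.

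The strategy of Player $\mathbb{I}$ is guided by the outermost operator of $\varphi$. Throughout, if some atomic formula separates the current pair, the game ends at step 1 and Player $\mathbb{I}$ wins. So we may assume this does not happen.

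\textbf{Base case.} If $s=1$, then $\varphi$ is atomic, and Player $\mathbb{I}$ wins immediately at step 1.

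\textbf{Negation.} If $\varphi=\neg\psi$, then $\psi$ separates $(\mathfrak{B},g)$ from $(\mathfrak{A},f)$ and has size $\leq s-1$. Player $\mathbb{I}$ swaps the models and wins by the induction hypothesis.

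\textbf{Conjunction.} If $\varphi=\psi_1\land\psi_2$, then $(\mathfrak{B},g)$ falsifies some $\psi_j$, while $(\mathfrak{A},f)$ satisfies both. Hence $\psi_j$ alone separates the pair, with $s(\psi_j)<s(\varphi)\leq s$. Player $\mathbb{I}$ does not split the budget here; he simply continues the strategy for $\psi_j$ within budget $s$. To make this legitimate, the induction should be run on $s(\varphi)$ rather than on $s$. We therefore first observe a monotonicity fact: a winning strategy with budget $s(\varphi)$ is also winning with any larger budget. This holds because the moves never require the budget to be small, and any spare budget can simply be left unused. Concretely, when the negation move or the quantifier move is made with leftover budget, the induction hypothesis for that larger remaining budget still applies, since it is stated for all budgets $\geq$ the formula size.

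\textbf{Quantifier.} If $\varphi=Qx\,\psi$ with $s(\psi)\leq s-1$, Player $\mathbb{I}$ chooses $Q$, $x$, $M':=\norm{\psi}^{\mathfrak{A},f}_x$ and $N':=\norm{\psi}^{\mathfrak{B},g}_x$. This is a legal move, since $(A,M')\in Q$ and $(B,N')\notin Q$. We then check each of the four responses of Player $\mathbb{II}$:
\begin{enumerate}
    \item For $a\in M'$ and $b\in B\setminus N'$, the pair $(\mathfrak{A},f\frac{a}{x})$, $(\mathfrak{B},g\frac{b}{x})$ is separated by $\psi$.
    \item For $a\in A\setminus M'$ and $b\in N'$, the new position places $(\mathfrak{B},g\frac{b}{x})$ on the left and $(\mathfrak{A},f\frac{a}{x})$ on the right, and $\psi$ separates them in that order.
    \item For $a\in M'$ and $a'\notin M'$, the pair $(\mathfrak{A},f\frac{a}{x})$, $(\mathfrak{A},f\frac{a'}{x})$ is separated by $\psi$.
    \item For $b\in N'$ and $b'\notin N'$, the pair $(\mathfrak{B},g\frac{b}{x})$, $(\mathfrak{B},g\frac{b'}{x})$ is separated by $\psi$.
\end{enumerate}
In all four cases the remaining budget is $s-1\geq s(\psi)$, so the induction hypothesis gives Player $\mathbb{I}$ a winning strategy.

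The only real subtlety is the conjunction case: the game's splitting move is not needed, and the budget is not tight. The cleanest way to handle this is to state and prove the claim for all budgets $s\geq s(\varphi)$ by induction on $s(\varphi)$, quantifying over all $\tau$-pairs. Done this way, neither a separate monotonicity lemma nor the split move is required.
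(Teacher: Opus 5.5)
Your proof is correct. Its quantifier case is exactly the paper's argument: the same choice $M'=\norm{\psi}^{\mathfrak{A},f}_x$, $N'=\norm{\psi}^{\mathfrak{B},g}_x$ and the same check of Player $\mathbb{II}$'s four responses.

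Where you genuinely differ is the conjunction case, and with it the induction measure. The paper inducts on $s$ and says the conjunction case goes ``as in'' Theorem~\ref{thm:width-game}, i.e.\ via the splitting move. In this pair game, however, the split keeps the same pair and lets Player $\mathbb{II}$ choose the budget. She can take the branch intended for the conjunct that $(\mathfrak{B},g)$ satisfies, leaving Player $\mathbb{I}$ a budget that need not suffice. So the class-game argument does not carry over literally; this is why the paper remarks that splitting never helps Player $\mathbb{I}$.

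What is actually needed is what you do: pass to the conjunct $\psi_j$ that separates, and absorb the budget slack. A plain induction on $s$ only gives Player $\mathbb{I}$ a win at budget $s-1$ at that point, so it would need a separate monotonicity lemma or a nested induction on formulas. Your strengthened claim (for all budgets $s\geq s(\varphi)$, by induction on $s(\varphi)$) builds monotonicity in. The conjunction case then needs no move at all, and the negation and quantifier cases go through because $s-1\geq s(\varphi)-1$ and $s\geq 2$ for non-atomic $\varphi$.

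When you write it up, state the strengthened claim at the outset. Drop the opening ``strong induction on $s$'' framing and the informal monotonicity sentence, since your final formulation supersedes both.
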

\begin{proof}
    Induction on $s$. The base case as well as the negation and conjunction induction cases are as in the proof of Lemma \ref{thm:width-game}, so it is sufficient to check the quantifier case.

    Suppose $Qx\,\varphi$ of size $\leq s$ separates
    $(\mathfrak{A},f)$ from $(\mathfrak{B},g)$. Player $\mathbb{I}$ chooses
    $Q$, $x$, $M':=\|\varphi\|^{\mathfrak{A},f}_x$ and
    $N':=\|\varphi\|^{\mathfrak{B},g}_x$. This is a valid move since
    $(A,M')\in Q$ and $(B,N')\notin Q$. Player $\mathbb{II}$ now has four options:
    \begin{itemize}
        \item If Player $\mathbb{II}$ picks $a\in M'$ and
        $b\in N\setminus N'$, then
        $\mathfrak{A},f\frac{a}{x}\models\varphi$ and
        $\mathfrak{B},g\frac{b}{x}\not\models\varphi$, so $\varphi$
        separates them. By the induction hypothesis, Player $\mathbb{I}$ wins at budget
        $s-1$.
        \item If Player $\mathbb{II}$ picks $a\in M\setminus M'$ and
        $b\in N'$, then
        $\mathfrak{B},g\frac{b}{x}\models\varphi$ and
        $\mathfrak{A},f\frac{a}{x}\not\models\varphi$, so $\varphi$
        separates $(\mathfrak{B},g\frac{b}{x})$ from
        $(\mathfrak{A},f\frac{a}{x})$. By the induction hypothesis, Player $\mathbb{I}$
        wins at budget $s-1$ (note the swap of left/right).
        \item If Player $\mathbb{II}$ picks $a\in M'$ and
        $a'\in M\setminus M'$, then
        $\mathfrak{A},f\frac{a}{x}\models\varphi$ and
        $\mathfrak{A},f\frac{a'}{x}\not\models\varphi$, so $\varphi$
        separates them within $\mathfrak{A}$. By the induction hypothesis, Player
        $\mathbb{I}$ wins at budget $s-1$.
        \item If Player $\mathbb{II}$ picks $b\in N'$ and
        $b'\in N\setminus N'$, then
        $\mathfrak{B},g\frac{b}{x}\models\varphi$ and
        $\mathfrak{B},g\frac{b'}{x}\not\models\varphi$, so $\varphi$
        separates them within $\mathfrak{B}$. By the induction hypothesis, Player
        $\mathbb{I}$ wins at budget $s-1$.\qedhere
    \end{itemize}
\end{proof}

The converse of Lemma \ref{lem:model-game-sound} does not hold in general: Player $\mathbb{I}$ may have a winning strategy at budget $s$ even though the smallest separating formula is strictly larger than $s$. The issue is that in the quantifier move, Player $\mathbb{I}$'s choice of $M'$ and $N'$ determines the intended extension of the subformula $\varphi$, but the induction hypothesis only yields a \textit{different} separating formula for each of Player $\mathbb{II}$'s responses. Assembling a single $\varphi$ with $\|\varphi\|^{\mathfrak{A},f}_x = M'$ and $\|\varphi\|^{\mathfrak{B},g}_x = N'$ may require a formula much larger than $s - 1$, as demonstrated by the following example.

\begin{example}\label{ex:model-game-gap}
    Let $\tau=\{P_1,P_2,P_3\}$ consist of three unary relations, let
    $\mathcal{Q}=\{\exists_{=3}\}$, and consider the models
    $\mathfrak{A}$ with domain $\{a_1,a_2,a_3,a_4\}$ where
    $P_i^{\mathfrak{A}}=\{a_i\}$ for $i\leq 3$, and $\mathfrak{B}$ with
    domain $\{b\}$ where
    $P_1^{\mathfrak{B}}=P_2^{\mathfrak{B}}=P_3^{\mathfrak{B}}=\emptyset$.

    Player $\mathbb{I}$ wins the
    $\mathrm{FS}_2\{\mathcal{Q}\}(\mathfrak{A},\emptyset,\mathfrak{B},
    \emptyset)$-game by choosing $\exists_{= 3}$, some variable $x$,
    $M'=\{a_1,a_2,a_3\}$ and $N'=\emptyset$. For Player $\mathbb{II}$'s
    third option (the only non-trivial one, since $N'$ is empty), Player
    $\mathbb{II}$ picks some $a_i\in M'$ and $a_4\in A\setminus M'$. The
    atomic formula $P_i(x)$ then separates them at budget $1$.

    However, no formula of size $\leq 1$ defines
    $M'=\{a_1,a_2,a_3\}$ in $\mathfrak{A}$: the formula
    $P_1(x)\lor P_2(x)\lor P_3(x)$ is needed, which
    has size strictly greater than $1$ (even if we treat disjunction as a primitive in the logical language, like conjunction). Thus the smallest formula separating
    $\mathfrak{A}$ from $\mathfrak{B}$ is strictly larger than the budget
    at which Player $\mathbb{I}$ wins.
\end{example}

\section{Minor Quantifiers}

In this section, we present a game which unifies the EF- and formula-size games and also covers \textit{minor quantifiers}, which are a generalization of generalized quantifiers. 

\subsection{Definitions}

In this subsection, we define minor quantifiers following \cite{kuusisto2015double}. We first define the simplest version of minor quantifiers: those of width $1$ and type $(1)$. Afterwards, we provide a general definition.

Let $Q$ be a generalized quantifier of width $1$ and type $(1)$. The \textbf{complement of} $Q$ is defined as $\overline{Q}:=\{(D,P)\mid (D,P)\notin Q\}$. Let $\mathcal{C}$ be an isomorphism-closed class of triples $(D,P_+,P_-)$. We say that $\mathcal{C}$ \textbf{witnesses} $Q$ if the following three conditions hold:
\begin{enumerate}
    \item $D\neq\emptyset$, $P_+\cup P_-\subseteq D$ and $P_+\cap P_-=\emptyset$ for each $(D,P_+,P_-)\in \mathcal{C}$.
    \item For each $(D,P)\in Q$, there exists a triple $(D,P_+,P_-)\in\mathcal{C}$ such that $P_+\subseteq P$ and $P_-\subseteq D\setminus P$.
    \item For each $(D,P_+,P_-)\in\mathcal{C}$, there does \textit{not} exist a pair $(D,P')\in \overline{Q}$ such that $P_+\subseteq P'$ and $P_-\subseteq D\setminus P'$.
\end{enumerate}
We call a pair $M=(\mathcal{C},\mathcal{D})$, where $\mathcal{C}$ witnesses $Q$ and $\mathcal{D}$ witnesses $\overline{Q}$, a \textbf{minor} of $Q$ and define
\begin{align*}
    &\mathfrak{A},f\models Mx\, \varphi\\
    &\iff (A,P_+,P_-)\in \mathcal{C}
    \text{ for some } P_+ \subseteq \norm{\varphi}^{\mathfrak{A},f}_x \text{ and } P_-\subseteq A \setminus \norm{\varphi}^{\mathfrak{A},f}_x.
\end{align*}

\begin{lemma}\label{lem:minor-equivalent}
    Let $M=(\mathcal{C},\mathcal{D})$ be a minor of $Q$. Then $M$ is equivalent to $Q$ in the sense that if $\varphi'$ is obtained from $\varphi$ by replacing each instance of $Q$ by $M$ (or vice-versa), then $\varphi'\equiv\varphi$.
\end{lemma}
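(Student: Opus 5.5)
The plan is to reduce the statement to a single semantic fact and then lift it to arbitrary formulas by structural induction. The key fact is that for every model $\mathfrak{A}$, assignment $f$, variable $x$ and formula $\varphi$,
\[
    \mathfrak{A},f\models Mx\,\varphi \iff \mathfrak{A},f\models Qx\,\varphi .
\]
Write $P:=\norm{\varphi}^{\mathfrak{A},f}_x$. Note that the truth of $Mx\,\varphi$, like that of $Qx\,\varphi$, depends only on $A$ and $P$. So it suffices to prove that for every $D\neq\emptyset$ and $P\subseteq D$, we have $(D,P)\in Q$ if and only if there exist $P_+\subseteq P$ and $P_-\subseteq D\setminus P$ with $(D,P_+,P_-)\in\mathcal{C}$.

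For the forward direction, suppose $(D,P)\in Q$. Condition 2 in the definition of ``$\mathcal{C}$ witnesses $Q$'' gives exactly such a triple in $\mathcal{C}$. For the converse, suppose $(D,P_+,P_-)\in\mathcal{C}$ with $P_+\subseteq P$ and $P_-\subseteq D\setminus P$. If $(D,P)\notin Q$, then $(D,P)\in\overline{Q}$, and $P$ itself is a $P'$ forbidden by condition 3, a contradiction. Hence $(D,P)\in Q$. This argument uses only the first component $\mathcal{C}$. The component $\mathcal{D}$, which witnesses $\overline{Q}$, is not needed for this equivalence; it presumably matters only for the game later. Isomorphism closure of $\mathcal{C}$ and $Q$ plays no role here, since the domain is the same throughout.

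Next, I lift the fact to arbitrary formulas by induction on $\varphi$. The precise claim is that for all $\mathfrak{A}$ and $f$, $\mathfrak{A},f\models\varphi$ if and only if $\mathfrak{A},f\models\varphi'$, where $\varphi'$ is $\varphi$ with every occurrence of $Q$ replaced by $M$. Atomic formulas are unchanged. For a formula $Q'\mathbf{x}_1,\dots,\mathbf{x}_k(\varphi_1,\dots,\varphi_k)$, the induction hypothesis, applied to every assignment $f\frac{\mathbf{a}}{\mathbf{x}_i}$, gives $\norm{\varphi_i}^{\mathfrak{A},f}_{\mathbf{x}_i}=\norm{\varphi_i'}^{\mathfrak{A},f}_{\mathbf{x}_i}$. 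If $Q'\neq Q$, the truth values therefore agree immediately. If $Q'=Q$, we combine the equality of extensions with the key fact. The reverse replacement, from $M$ back to $Q$, is the same induction read in the other direction.

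The only delicate point is bookkeeping. Since $M$ is a minor of a quantifier of width $1$ and type $(1)$, the occurrences of $Q$ have the form $Qx\,\psi$. The inductive hypothesis must be stated for all assignments, not just the current one, so that equality of extensions follows. I expect no real obstacle beyond noticing that condition 3 is applied with $P'=P$.
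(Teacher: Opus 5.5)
Your proposal is correct and follows the paper's proof. The forward direction is condition~2 of ``$\mathcal{C}$ witnesses $Q$'', the converse is condition~3 applied with $P'=\|\varphi\|^{\mathfrak{A},f}_x$, and, as you note, only $\mathcal{C}$ is used. The one difference is that the paper stops at the single-occurrence equivalence $Mx\,\varphi\equiv Qx\,\varphi$ and leaves implicit the structural induction over all assignments, which you spell out to cover arbitrary replacements.
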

\begin{proof}
    Suppose that $\mathfrak{A},f\models Qx\, \varphi$. This means that $(A,\norm{\varphi}^{\mathfrak{A},f}_x)\in Q$. Hence, by the second clause of the witnessing definition, there exists a triple $(A,P_+,P_-)\in \mathcal{C}$ such that $P_+\subseteq \norm{\varphi}^{\mathfrak{A},f}_{x}$ and $P_-\subseteq A\setminus \norm{\varphi}^{\mathfrak{A},f}_{x}$. By the semantics above, we thus have $\mathfrak{A},f\models Mx\, \varphi$.

    Suppose then that $\mathfrak{A},f\models Mx\,\varphi$. This means that there exists a triple $(A,P_+,P_-)\in \mathcal{C}$ such that $P_+\subseteq \norm{\varphi}^{\mathfrak{A},f}_{x}$ and $P_-\subseteq A\setminus \norm{\varphi}^{\mathfrak{A},f}_{x}$. Hence, by the third clause of the witnessing definition, the pair $(A,\norm{\varphi}^{\mathfrak{A},f}_{x})$ cannot belong to $\overline{Q}$, so it must belong to $Q$. Thus $\mathfrak{A},f\models Qx\,\varphi$.
\end{proof}

Each generalized quantifier has multiple minors. We call
\[
    M_Q:=(\{(D,P,D\setminus P)\mid (D,P)\in Q\},\{(D,P,D\setminus P)\mid (D,P)\in\overline{Q}\}),
\]
the \textbf{canonical minor of} $Q$. Generalized quantifiers can thus be seen as special cases of minor quantifiers.

\begin{example}
    Define two minors of the existential quantifier $\exists$ as follows. The \textbf{strict existential quantifier} $\exists^s$ is the minor quantifier $(\mathcal{C},\mathcal{D})$, where
    \begin{enumerate}
        \item $(D,P_+,P_-)\in \mathcal{C}$ if and only if $D\neq\emptyset$, $P_+\subseteq D$ is a singleton set, and $P_-=\emptyset$, and
        \item $(D,P_+,P_-)\in\mathcal{D}$ if and only if $D\neq\emptyset$, $P_+=\emptyset$ and $P_-=D$.
    \end{enumerate}    
    The \textbf{lax existential quantifier} $\exists^l$ is the same, except that in the first clause, $P_+\subseteq D$ only has to be non-empty. Notice that neither $\exists^s$ nor $\exists^l$ is equal to the canonical minor $M_\exists$. However, $\exists^s$, $\exists^l$, $M_\exists$ and $\exists$ are all equivalent.
\end{example}

Now, let $Q$ be a generalized quantifier of width $k\in\mathbb{N}$ and type $\textbf{n}\in \mathbb{N}^k$. Let $\mathcal{C}$ be an isomorphism-closed class of structures $(D,P_+^1,\dots,P_+^k,P_-^1,\dots,P^k_-)$. We say that $\mathcal{C}$ \textbf{witnesses} $Q$ if the following three conditions hold:
\begin{enumerate}
    \item $D\neq\emptyset$, $P_+^j\cup P_-^j\subseteq D^{\textbf{n}(j)}$ and $P_+^j\cap P_-^j=\emptyset$ for each $j\in\{1,\dots,k\}$.
    \item For each $(D,P_1,\dots,P_k)\in Q$, there exists a tuple
    \[
        (D,P_+^1,\dots,P_+^k,P_-^1,\dots,P^k_-)\in\mathcal{C}
    \]
    such that $P^j_+\subseteq P^j$ and $P^j_-\subseteq D^{\textbf{n}(j)}\setminus P^j$ for each $j\in\{1,\dots,k\}$.
    \item For each
    \[
        (D,P_+^1,\dots,P_+^k,P_-^1,\dots,P^k_-)\in\mathcal{C},
    \]
    there does \textit{not} exist a tuple $(D,P_1,\dots,P_k)\in \overline{Q}$ such that $P^j_+\subseteq P^j$ and $P^j_-\subseteq D^{\textbf{n}(j)}\setminus P^j$ for each $j\in\{1,\dots,k\}$.
\end{enumerate}
We call a pair $M=(\mathcal{C},\mathcal{D})$, where $\mathcal{C}$ witnesses $Q$ and $\mathcal{D}$ witnesses $\overline{Q}$, a \textbf{minor} of $Q$ and define
\begin{align*}
    &\mathfrak{A},f\models M \textbf{x}_1,\dots,\textbf{x}_k(\varphi_1,\dots,\varphi_k)\\
    &\iff (A,P_{1,+},\dots,P_{k,+},P_{1,-},\dots,P_{k,-})\in\mathcal{C}\\
    &\text{ where }P_{j,+}\subseteq \norm{\varphi}^{\mathfrak{A},f}_{\textbf{x}_j} \text{ and } P_{j,-}\subseteq A^{\textbf{n}(j)}\setminus \norm{\varphi}^{\mathfrak{A},f}_{\textbf{x}_j} \text{ for each } j\in\{1,\dots,k\}.
\end{align*}
The canonical minor of a generalized quantifier of width $k$ and type $\textbf{n}$ is
\begin{align*}
    M_Q:=(&\{(D,P_1,\dots,P_k,D^{\textbf{n}(1)}\setminus P_1,\dots,D^{\textbf{n}(k)}\setminus P_k)\mid (D,P_1,\dots,P_k)\in Q\},\\
    &\{(D,P_1,\dots,P_k,D^{\textbf{n}(1)}\setminus P_1,\dots,D^{\textbf{n}(k)}\setminus P_k)\mid (D,P_1,\dots,P_k)\in \overline{Q}\}).
\end{align*}

\begin{example}
    The canonical minor of conjunction is $M_\land=(\mathcal{C}_\land,\mathcal{D}_\land)$, where
    \begin{align*}
        \mathcal{C}_\land&=\{(D,\{\emptyset\},\{\emptyset\}, \emptyset, \emptyset)\} \text{ and}\\
        \mathcal{D}_\land&=\{(D,\{\emptyset\},\emptyset,\emptyset,\{\emptyset\}),(D,\emptyset,\{\emptyset\},\{\emptyset\},\emptyset),(D,\emptyset,\emptyset,\{\emptyset\},\{\emptyset\})\}.
    \end{align*}
    (In each case, $D$ is an arbitrary set; we omit the $\mid$ notation for clarity.) $\mathcal{C}_\land$ contains precisely one case: when both the first and the second conjunct are true. Conversely, the first tuple of $\mathcal{D}_\land$ corresponds to when the first conjunct is true and the second false, the second tuple to when the first conjunct is false and the second true, and the third tuple to when both of them are false.

    Another, stricter minor of conjunction is $M'_\land=(\mathcal{C}_\land,\mathcal{D}_\land')$, where
    \begin{align*}
        \mathcal{D}_\land'&=\{(D,\emptyset,\emptyset,\emptyset,\{\emptyset\}),(D,\emptyset,\emptyset,\{\emptyset\},\emptyset)\}.
    \end{align*}
    Here, the first tuple corresponds to the second conjunct being false and the second tuple to the first conjunct being false, with neither case claiming anything ``extra'' about the truth value of the other conjunct. In other words, $\mathcal{D}'_\land$ contains precisely the prime implicants of the complement of conjunction.

    It is not hard to see that $\mathcal{C}_\land$ witnesses $\land$ (and is in fact the only set of tuples that does so), and both $\mathcal{D}_\land$ and $\mathcal{D}'_\land$ witness $\overline{\land}$ with differing amounts of witnesses required.
\end{example}

We say that a minor quantifier $M$ has width $k$ and type $\textbf{n}$ if its underlying generalized quantifier $Q$ does. We note that Lemma \ref{lem:minor-equivalent} holds with trivial modifications for quantifiers of width $k\in\mathbb{N}$ and type $\textbf{n}\in \mathbb{N}^k$. For a finite collection $\mathcal{M}$ of minor quantifiers, we denote first-order logic with the minor quantifiers $\mathcal{M}$ as $\mathrm{FO}(\mathcal{M})$.

\subsection{Formula-Cost Game}

Let $\tau$ be a finite vocabulary and $\mathcal{M}$ be a finite collection of minor quantifiers. A \textbf{cost function} $s:\{=\}\cup\tau\cup\mathcal{M}\to\mathbb{Z}_+$ maps each logical symbol to a strictly positive cost. We then define the cost $s(\varphi)$ of an $\mathrm{FO}(\mathcal{M})$-formula $\varphi$ to be the sum of costs of the symbols present in it.

\begin{definition}
    Let $\mathcal{A}$ and $\mathcal{B}$ be classes of $\tau$-pairs, $s \in \mathbb{Z}_+$ be a \textbf{budget} and $s_{\min}$ be the smallest cost of an atomic formula or a minor quantifier of width $0$. The $\mathrm{FC}_{s}\{\mathcal{M}\}(\mathcal{A},\mathcal{B})$-game is a two-player game that starts in the position $(s,\mathcal{A},\mathcal{B})$. The $i$th round proceeds from the position $(s_i,\mathcal{A}_i,\mathcal{B}_i)$ as follows:
    \begin{enumerate}
        \item \textbf{Check victory.} If $s_i<s_{\min}$, then Player $\mathbb{II}$ wins. Otherwise, Player $\mathbb{I}$ chooses $\varphi$: either an atomic formula of cost $\leq s_i$ or a minor quantifier $M$ of width $0$ and cost $\leq s_i$. If $\varphi$ separates $\mathcal{A}_i$ and $\mathcal{B}_i$, then Player $\mathbb{I}$ wins; otherwise, the game continues as below.
        \item \textbf{Choose witness and falsifier sets in the minor.} Player $\mathbb{I}$ chooses a minor quantifier $M=(\mathcal{C},\mathcal{D})\in\mathcal{M}$ of width $k\in\mathbb{Z}_+$ and type $\textbf{n}\in \mathbb{N}^k$, $k$ variables $\textbf{x}_1,\dots,\textbf{x}_k$ such that each $\textbf{x}_j\in \text{VAR}^{\textbf{n}(j)}$, and $u_1,\dots,u_k\in\mathbb{Z}_+$ such that $u_1+\dots+u_k=s_i-s(M)$. Player $\mathbb{I}$ then chooses, for each $j\in\{1,\dots,k\}$ and each $(\mathfrak{A},f)\in\mathcal{A}_i$, a witness set $P^{\mathfrak{A}}_{j,+}\subseteq A^{\textbf{n}(j)}$ and a falsifier set $P^{\mathfrak{A}}_{j,-}\subseteq A^{\textbf{n}(j)}$, both of which respect $\textbf{x}_j$-repetitions, such that 
        \[
            (A,P^{\mathfrak{A}}_{1,+},\dots,P^{\mathfrak{A}}_{k,+},P^\mathfrak{A}_{1,-},\dots,P^\mathfrak{A}_{k,-})\in \mathcal{C}.
        \]
        Player $\mathbb{I}$ also chooses, for each $j\in\{1,\dots,k\}$ and each $(\mathfrak{B},g)\in\mathcal{B}_i$, a witness set $P^{\mathfrak{B}}_{j,+}\subseteq B^{\textbf{n}(j)}$ and a falsifier set $P^{\mathfrak{B}}_{j,-}\subseteq B^{\textbf{n}(j)}$, both of which respect $\textbf{x}_j$-repetitions, such that 
        \[
            (B,P^{\mathfrak{B}}_{1,+},\dots,P^{\mathfrak{B}}_{k,+},P^\mathfrak{B}_{1,-},\dots,P^\mathfrak{B}_{k,-})\in \mathcal{D}.
        \]
        (If such choices do not exist, Player $\mathbb{II}$ wins the game immediately.)
        \item \textbf{Choose a branch and continue.} Player $\mathbb{II}$ chooses $j\in\{1,\dots,k\}$. The next round then begins from the position $(u_j,\mathcal{A}'_j, \mathcal{B}'_j)$, where
        \begin{align*}
            \mathcal{A}'_j:=&\{(\mathfrak{C},h\frac{\textbf{c}}{\textbf{x}_j})\mid (\mathfrak{C},h)\in \mathcal{A}_i\cup\mathcal{B}_i,\textbf{c}\in P^\mathfrak{C}_{j,+}\}\text{ and}\\
            \mathcal{B}'_j:=&\{(\mathfrak{C},h\frac{\textbf{c}}{\textbf{x}_j})\mid (\mathfrak{C},h)\in \mathcal{A}_i\cup\mathcal{B}_i,\textbf{c}\in P^\mathfrak{C}_{j,-}\}.
        \end{align*}
    \end{enumerate}
\end{definition}

\begin{lemma}\label{minor-base-case}
    Then Player $\mathbb{I}$ has a winning strategy in the $\mathrm{FC}_{s_{\min}}\{\mathcal{M}\}(\mathcal{A},\mathcal{B})$-game if and only if $\mathcal{A}$ and $\mathcal{B}$ are separable by an $\mathrm{FO}(\mathcal{M})$-formula of cost $\leq s_{\min}$.
\end{lemma}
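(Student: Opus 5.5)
The plan is to treat this as the base case of an induction on the budget, and to argue directly from the definition of the $\mathrm{FC}_{s_{\min}}\{\mathcal{M}\}(\mathcal{A},\mathcal{B})$-game together with a description of all formulas of cost $\leq s_{\min}$. The first step is to show that every $\mathrm{FO}(\mathcal{M})$-formula of cost $\leq s_{\min}$ is either atomic or of the form $M$ for a minor quantifier $M$ of width $0$. Every formula is built by the grammar. If it is not atomic, its outermost symbol is a minor quantifier $M$ of some width $k$. If $k\geq 1$, the formula contains $M$ together with $k\geq 1$ immediate subformulas. Each subformula contains at least one atomic formula or width-$0$ quantifier at a leaf, and that leaf costs at least $s_{\min}$. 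Since $s(M)\geq 1$ by strict positivity of the cost function, the total cost is at least $1+s_{\min}>s_{\min}$. Hence only atomic formulas and width-$0$ quantifiers have cost $\leq s_{\min}$.

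For the direction from separability to the game, I would take a separating formula $\varphi$ of cost $\leq s_{\min}$. By the first step, $\varphi$ is an atomic formula or a width-$0$ minor quantifier of cost $\leq s_{\min}=s_1$. In the first round no check of $s_1<s_{\min}$ fails, so Player $\mathbb{I}$ may choose exactly this $\varphi$ in the check-victory step and win immediately.

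For the converse, I would suppose Player $\mathbb{I}$ has a winning strategy and examine the first round. The victory check does not end the game against him, since $s_1=s_{\min}$. If his chosen $\varphi$ separates $\mathcal{A}$ and $\mathcal{B}$, that $\varphi$ is the required formula of cost $\leq s_{\min}$. Otherwise the game continues to step 2, where he must pick a quantifier of width $k\geq 1$ and positive integers $u_1,\dots,u_k$ with sum $s_{\min}-s(M)$. Because $s(M)\geq1$ this sum is $<s_{\min}$, so every branch starts with a budget $u_j<s_{\min}$, and Player $\mathbb{II}$ then wins at the next victory check. It may also be that no admissible $u_j$ or witness sets exist at all, in which case Player $\mathbb{II}$ wins immediately. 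Either way the strategy is not winning, so a winning strategy must separate in step 1.

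The only delicate point is the degenerate cases. If $\mathcal{A}$ or $\mathcal{B}$ is empty, the classes are separable by any formula, and I need some atomic formula of cost $\leq s_{\min}$ to exist for Player $\mathbb{I}$ to choose. This holds by the definition of $s_{\min}$ as a minimum attained by an atomic formula or a width-$0$ quantifier, and such a choice vacuously separates. I do not expect any genuine obstacle beyond carefully stating the cost lower bound in the first step.
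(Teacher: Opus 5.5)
Your proof is correct and follows the same approach as the paper. A quantifier move leaves every branch with budget strictly below $s_{\min}$, so a winning strategy for Player $\mathbb{I}$ must separate already in step 1. Conversely, a separating formula of cost $\leq s_{\min}$ is atomic or a width-$0$ quantifier, which Player $\mathbb{I}$ can simply choose. Your explicit lower bound $s(M)+s_{\min}$ for formulas headed by a quantifier of positive width just makes precise a step that the paper asserts without argument.
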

\begin{proof}
    Suppose Player $\mathbb{I}$ has a winning strategy in the $\mathrm{FC}_{s_{\min}}\{\mathcal{M}\}(\mathcal{A},\mathcal{B})$-game. The winning strategy cannot involve playing a quantifier move, since then Player $\mathbb{I}$ would have a winning strategy in the $\mathrm{FC}_{s}\{\mathcal{M}\}(\mathcal{A}',\mathcal{B}')$-game for some classes $\mathcal{A}'$ and $\mathcal{B}'$ and some $s<s_{\min}$, which is impossible since Player $\mathbb{I}$ loses that game immediately. Thus Player $\mathbb{I}$'s winning strategy must consist of choosing an atomic formula or a width-0 quantifier $\varphi$ of cost $\leq s_{\min}$, which separates $\mathcal{A}$ and $\mathcal{B}$.

    Suppose then that $\mathcal{A}$ and $\mathcal{B}$ are separable by an $\mathrm{FO}(\mathcal{M})$-formula $\varphi$ of cost $\leq s_{\min}$. Then, since $\varphi$ must be an atomic formula or a width-0 quantifier, Player $\mathbb{I}$ has an obvious winning strategy by simply choosing $\varphi$ in step 1.
\end{proof}

Let $s>s_{\min}$. We now make the induction hypothesis that the statements below are equivalent for all $s_{\min}\leq\ell < s$.
\begin{enumerate}
    \item Player $\mathbb{I}$ has a winning strategy in the $\mathrm{FC}_\ell\{\mathcal{M}\}(\mathcal{A},\mathcal{B})$-game.
    \item $\mathcal{A}$ and $\mathcal{B}$ are separable by an $\mathrm{FO}(\mathcal{M})$-formula of cost $\leq \ell$.
\end{enumerate}

\begin{lemma}\label{lem:minor-win-implies-separability}
    If Player $\mathbb{I}$ has a winning strategy in the $\mathrm{FC}_s\{\mathcal{M}\}(\mathcal{A},\mathcal{B})$-game, then $\mathcal{A}$ and $\mathcal{B}$ are separable by an $\mathrm{FO}(\mathcal{M})$-formula of cost $\leq s$.
\end{lemma}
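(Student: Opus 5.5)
The proof will follow the pattern of the quantifier case in Theorem~\ref{thm:width-game}, using the induction hypothesis for all budgets $s_{\min}\leq \ell<s$. We fix a winning strategy of Player~$\mathbb{I}$ in the $\mathrm{FC}_s\{\mathcal{M}\}(\mathcal{A},\mathcal{B})$-game and split according to its first move.

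\textbf{Step 1.} If the strategy wins already in step~1, then Player~$\mathbb{I}$ has chosen an atomic formula or a width-$0$ minor quantifier $\varphi$ of cost $\leq s$ that separates $\mathcal{A}$ and $\mathcal{B}$, and we are done.

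\textbf{Step 2.} Otherwise the strategy prescribes a minor $M=(\mathcal{C},\mathcal{D})$ of width $k\geq 1$ and type $\mathbf{n}$, variable tuples $\mathbf{x}_1,\dots,\mathbf{x}_k$, budgets $u_1+\dots+u_k=s-s(M)$, and sets $P^{\mathfrak{C}}_{j,\pm}$ for every pair in $\mathcal{A}\cup\mathcal{B}$. Since the strategy is winning whichever $j$ Player~$\mathbb{II}$ picks, Player~$\mathbb{I}$ wins each $\mathrm{FC}_{u_j}\{\mathcal{M}\}(\mathcal{A}'_j,\mathcal{B}'_j)$-game. We have $u_j<s$. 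Also $u_j\geq s_{\min}$: if $u_j<s_{\min}$, Player~$\mathbb{II}$ would win immediately by choosing that $j$, contradicting that the strategy is winning. So the induction hypothesis applies and yields formulas $\varphi_j$ of cost $\leq u_j$ separating $\mathcal{A}'_j$ from $\mathcal{B}'_j$.

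\textbf{Step 3.} The candidate separating formula is $\psi:=M\,\mathbf{x}_1,\dots,\mathbf{x}_k(\varphi_1,\dots,\varphi_k)$, whose cost is at most $s(M)+\sum_j u_j=s$.

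\textbf{Step 4.} We verify that $\psi$ separates $\mathcal{A}$ and $\mathcal{B}$. By the definition of $\mathcal{A}'_j$ and $\mathcal{B}'_j$, for each $(\mathfrak{C},h)\in\mathcal{A}\cup\mathcal{B}$ and each $j$,
\[
P^{\mathfrak{C}}_{j,+}\subseteq\norm{\varphi_j}^{\mathfrak{C},h}_{\mathbf{x}_j}\quad\text{and}\quad P^{\mathfrak{C}}_{j,-}\subseteq C^{\mathbf{n}(j)}\setminus\norm{\varphi_j}^{\mathfrak{C},h}_{\mathbf{x}_j}.
\]
The sets respect $\mathbf{x}_j$-repetitions, so the extended assignments are well defined. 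Unlike Theorem~\ref{thm:width-game}, we obtain only inclusions, not equalities, but the minor semantics requires exactly this.

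For $(\mathfrak{A},f)\in\mathcal{A}$, the tuple $(A,P^{\mathfrak{A}}_{1,+},\dots,P^{\mathfrak{A}}_{k,-})$ lies in $\mathcal{C}$ and sits inside the extensions, so $\mathfrak{A},f\models\psi$ directly.

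For $(\mathfrak{B},g)\in\mathcal{B}$, the corresponding tuple lies in $\mathcal{D}$, which witnesses $\overline{Q}$. Set $P_j:=\norm{\varphi_j}^{\mathfrak{B},g}_{\mathbf{x}_j}$. By the third witnessing clause for $\mathcal{D}$, no tuple of $Q$ contains these witness and falsifier sets, so $(B,P_1,\dots,P_k)\in\overline{Q}$. Hence $\mathfrak{B},g\not\models Q\,\mathbf{x}_1,\dots,\mathbf{x}_k(\varphi_1,\dots,\varphi_k)$, and by Lemma~\ref{lem:minor-equivalent} (in its width-$k$ form) $\mathfrak{B},g\not\models\psi$.

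The main subtlety is this last step. Falsity of the minor formula cannot be read off from $\mathcal{D}$ directly, because $\mathcal{D}$ certifies truth of the complement. We therefore route the argument through the underlying $Q$ and Lemma~\ref{lem:minor-equivalent}. We must also make sure that the disjointness of $P_{j,+}$ and $P_{j,-}$ and the repetition-respecting conditions cause no gaps in the inclusions.
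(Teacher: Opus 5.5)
Your proposal is correct and matches the paper's proof step for step. You apply the induction hypothesis to each branch to obtain $\varphi_j$, derive $P^{\mathfrak{C}}_{j,+}\subseteq\norm{\varphi_j}^{\mathfrak{C},h}_{\mathbf{x}_j}$ and $P^{\mathfrak{C}}_{j,-}\subseteq C^{\mathbf{n}(j)}\setminus\norm{\varphi_j}^{\mathfrak{C},h}_{\mathbf{x}_j}$, use $\mathcal{C}$ for truth on $\mathcal{A}$, and get falsity on $\mathcal{B}$ from the third witnessing clause of $\mathcal{D}$ together with Lemma~\ref{lem:minor-equivalent}; your extra checks (the step-1 win and $u_j\geq s_{\min}$, needed for the induction hypothesis as stated) only make explicit what the paper leaves implicit.
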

\begin{proof}
    Suppose Player $\mathbb{I}$ has a winning strategy in the $\mathrm{FC}_s\{\mathcal{M}\}(\mathcal{A},\mathcal{B})$-game that begins by choosing $M=(\mathcal{C},\mathcal{D})$, $\textbf{x}_1,\dots,\textbf{x}_k$, $u_1,\dots,u_k$ and some witness and falsifier sets. Since the strategy is winning, Player $\mathbb{I}$ has a winning strategy for whatever $j$ Player $\mathbb{II}$ picks, meaning that, by the induction hypothesis, there is a formula $\varphi_j$ of cost $\leq u_j<s$ that separates the classes $\mathcal{A}'_j$ and $\mathcal{B}'_j$ of the next round. Thus $P^\mathfrak{A}_{j,+}\subseteq \norm{\varphi_j}^{\mathfrak{A},f}_{\textbf{x}_j}$ and $P^\mathfrak{A}_{j,-}\subseteq A^{\textbf{n}(j)}\setminus \norm{\varphi_j}^{\mathfrak{A},f}_{\textbf{x}_j}$ for every $(\mathfrak{A},f)\in \mathcal{A}$, and likewise for $\mathcal{B}$.
    
    Since $(A,P^\mathfrak{A}_{1,+},\dots,P^\mathfrak{A}_{k,-})\in \mathcal{C}$, we have $\mathfrak{A},f\models M\textbf{x}_1,\dots,\textbf{x}_k(\varphi_1,\dots,\varphi_k)$. Conversely, since $(B,P^\mathfrak{B}_{1,+},\dots,P^\mathfrak{B}_{k,-})\in \mathcal{D}$ and $\mathcal{D}$ witnesses $\overline{Q}$, where $M$ is the minor of $Q$, then $(B,\norm{\varphi_1}^{\mathfrak{B},g}_{\textbf{x}_1},\dots,\norm{\varphi_k}^{\mathfrak{B},g}_{\textbf{x}_k})\notin Q$, so $\mathfrak{B},g\not\models Q\textbf{x}_1,\dots,\textbf{x}_k(\varphi_1,\dots,\varphi_k)$. Hence, by Lemma \ref{lem:minor-equivalent}, $\mathfrak{B},g\not\models M\textbf{x}_1,\dots,\textbf{x}_k(\varphi_1,\dots,\varphi_k)$. Thus the formula $ M\textbf{x}_1,\dots,\textbf{x}_k(\varphi_1,\dots,\varphi_k)$, which is of cost
    \[
        \sum_{j=1}^k s(\varphi_j)+s(M)\leq \sum_{j=1}^k u_j+s(M)=s,
    \]
    separates $\mathcal{A}$ and $\mathcal{B}$.
\end{proof}

\begin{lemma}\label{lem:separability-implies-minor-win}
    If $\mathcal{A}$ and $\mathcal{B}$ are separable by an $\mathrm{FO}(\mathcal{M})$-formula of cost $\leq s$, then Player $\mathbb{I}$ has a winning strategy in the $\mathrm{FC}_s\{\mathcal{M}\}(\mathcal{A},\mathcal{B})$-game.
\end{lemma}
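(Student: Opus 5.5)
Take a separating formula $\varphi$ of minimal cost $\leq s$ and split on its outermost symbol; the induction hypothesis for all $s_{\min}\leq\ell<s$ handles everything below the top symbol.

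\textbf{Atomic or width-$0$ case.} If $\varphi$ is atomic or a minor quantifier of width $0$, its cost is at most $s$ and it is at least $s_{\min}$ by definition of $s_{\min}$. Player $\mathbb{I}$ chooses $\varphi$ in step 1 and wins immediately.

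\textbf{Quantifier case: the move.} Otherwise $\varphi=M\textbf{x}_1,\dots,\textbf{x}_k(\varphi_1,\dots,\varphi_k)$ with $M=(\mathcal{C},\mathcal{D})\in\mathcal{M}$ of width $k\geq 1$, a minor of some $Q$. Since every subformula costs at least $s_{\min}\geq 1$, each $s(\varphi_j)$ is a positive integer. Player $\mathbb{I}$ plays the following in step 2.
\begin{itemize}
\item He chooses $M$ and the variables $\textbf{x}_1,\dots,\textbf{x}_k$.
\item He chooses the budget split $u_j:=s(\varphi_j)$ for $j<k$ and $u_k:=s(\varphi_k)+\bigl(s_i-s(\varphi)\bigr)$. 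These are positive and sum to $s_i-s(M)$, and $u_j\geq s(\varphi_j)$ for all $j$.
\item For each $(\mathfrak{A},f)\in\mathcal{A}$: since $\mathfrak{A},f\models M\textbf{x}_1,\dots,\textbf{x}_k(\varphi_1,\dots,\varphi_k)$, the semantics of minor quantifiers give a tuple $(A,P_{1,+},\dots,P_{k,-})\in\mathcal{C}$ with $P_{j,+}\subseteq\norm{\varphi_j}^{\mathfrak{A},f}_{\textbf{x}_j}$ and $P_{j,-}\subseteq A^{\textbf{n}(j)}\setminus\norm{\varphi_j}^{\mathfrak{A},f}_{\textbf{x}_j}$. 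He plays these sets.
\item For each $(\mathfrak{B},g)\in\mathcal{B}$: since $\mathfrak{B},g\not\models\varphi$, Lemma \ref{lem:minor-equivalent} gives $(B,\norm{\varphi_1}^{\mathfrak{B},g}_{\textbf{x}_1},\dots)\in\overline{Q}$. As $\mathcal{D}$ witnesses $\overline{Q}$ (clause 2), there is a tuple in $\mathcal{D}$ whose witness sets lie inside the extensions and whose falsifier sets lie inside their complements. He plays these sets.
\end{itemize}
So the required choices exist and Player $\mathbb{II}$ does not win at step 2.

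\textbf{Quantifier case: the branches.} Whatever $j$ Player $\mathbb{II}$ picks, every pair in $\mathcal{A}'_j$ has the form $(\mathfrak{C},h\frac{\textbf{c}}{\textbf{x}_j})$ with $\textbf{c}$ in a chosen witness set, hence in $\norm{\varphi_j}^{\mathfrak{C},h}_{\textbf{x}_j}$, so it satisfies $\varphi_j$. Likewise every pair in $\mathcal{B}'_j$ falsifies $\varphi_j$. Thus $\varphi_j$ separates $\mathcal{A}'_j$ and $\mathcal{B}'_j$ with cost $\leq u_j<s$, and $u_j\geq s_{\min}$ because $s(\varphi_j)\geq s_{\min}$. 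The induction hypothesis then gives Player $\mathbb{I}$ a winning strategy from $(u_j,\mathcal{A}'_j,\mathcal{B}'_j)$.

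\textbf{Main obstacle: repetitions.} The chosen sets must respect $\textbf{x}_j$-repetitions, but witness tuples from $\mathcal{C}$ or $\mathcal{D}$ need not. Tuples $\textbf{c}$ that fail to respect $\textbf{x}_j$ cannot be substituted at all. Moreover, the semantics of $\norm{\varphi_j}_{\textbf{x}_j}$ only ever contain respecting tuples, so any tuple in a chosen witness set that is inside the extension already respects repetitions. For falsifier sets, I would show we may intersect each $P_{j,-}$ with the set of respecting tuples while staying in $\mathcal{D}$ (resp. $\mathcal{C}$), and if necessary reduce to this by choosing canonical-style witnesses. I expect this to be the delicate point, possibly requiring a mild convention that membership in $\mathcal{C},\mathcal{D}$ is read relative to $\textbf{x}_j$-respecting tuples.
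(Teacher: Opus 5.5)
Your main argument is the paper's. You play $M$, take the $\mathcal{C}$-tuples on $\mathcal{A}$ straight from the semantics and the $\mathcal{D}$-tuples on $\mathcal{B}$ via Lemma~\ref{lem:minor-equivalent} and clause~2, observe that $\varphi_j$ separates every branch, and apply the induction hypothesis. Your explicit split ($u_j=s(\varphi_j)$, with the surplus put on $u_k$), the check $s_{\min}\le u_j<s$, and the separate atomic/width-$0$ case fill in details that the paper covers by ``without loss of generality''.

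The repetition issue in your last paragraph is real, and the paper's proof does not address it. However, the remedy you sketch cannot work, because shrinking a falsifier set need not preserve membership in $\mathcal{C}$ or $\mathcal{D}$. For a canonical minor $M_Q$, the falsifier set is forced to be the whole complement $D^{\mathbf{n}(j)}\setminus P_{j,+}$. That complement contains non-respecting tuples whenever $\mathbf{x}_j$ repeats a variable and $|D|\ge 2$.

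In fact the lemma fails under the literal rule. Take $Q=\{(D,P)\mid |P|=1\}$ of type $(2)$, $\mathcal{M}=\{M_Q\}$, $\tau=\{U\}$, and all costs $1$. The sentence $M_Q(xx)\,U(x)$ has cost $2$ and separates $\mathfrak{A}$ (domain $\{a,b\}$, $U^{\mathfrak{A}}=\{a\}$) from $\mathfrak{B}$ (domain $\{c,d\}$, $U^{\mathfrak{B}}=\{c,d\}$). Yet Player $\mathbb{I}$ cannot legally play $\mathbf{x}_1=(x,x)$. If he plays $\mathbf{x}_1=(x,y)$ instead, then at budget $1$ he needs an atomic formula true at exactly one pair of $A^2$, and none exists: $U(x)$, $U(y)$ and $x=y$ each hold at two pairs.

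So a change of convention is unavoidable, not merely convenient. The natural one is this:
\begin{itemize}
\item only the witness sets $P_{j,+}$ must respect $\mathbf{x}_j$-repetitions;
\item non-respecting tuples of $P_{j,-}$ are simply discarded when forming $\mathcal{B}'_j$, as the appendix's formula-size game does for $\mathcal{C}^-_j$.
\end{itemize}
Under this rule your proof is complete, since witness sets contained in extensions automatically respect repetitions. Lemma~\ref{lem:minor-win-implies-separability} also survives, because non-respecting tuples never lie in any extension.
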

\begin{proof}
    Without loss of generality, suppose that $M\textbf{x}_1,\dots,\textbf{x}_k(\varphi_1,\dots,\varphi_k)$, a formula of cost $\leq s$, separates $\mathcal{A}$ and $\mathcal{B}$. Player $\mathbb{I}$ begins by choosing $M=(\mathcal{C},\mathcal{D})$, the variables $\textbf{x}_1,\dots,\textbf{x}_k$, and $u_1,\dots,u_k\in\mathbb{Z}_+$ such that $u_1+\dots+u_k=s-s(M)$. Now, for each $(\mathfrak{A},f)\in\mathcal{A}$, since $\mathfrak{A},f\models M\textbf{x}_1,\dots,\textbf{x}_k(\varphi_1,\dots,\varphi_k)$, there exist witness sets and falsifier sets such that $(A,P^\mathfrak{A}_{1,+},\dots,P^\mathfrak{A}_{k,-})\in \mathcal{C}$. Similarly, for each $(\mathfrak{B},g)\in\mathcal{B}$, by Lemma \ref{lem:minor-equivalent}, $(B,\norm{\varphi_1}^{\mathfrak{B},g}_{\textbf{x}_1},\dots,\norm{\varphi_k}^{\mathfrak{B},g}_{\textbf{x}_k})\notin Q$, so there exist witness sets and falsifier sets such that $(B,P^\mathfrak{B}_{1,+},\dots,P^\mathfrak{B}_{k,-})\in \mathcal{D}$. Player $\mathbb{I}$ chooses these sets. Now, for whichever $j$ Player $\mathbb{II}$ picks, every pair in $\mathcal{A}'_j$ satisfies $\varphi_j$ and every pair in $\mathcal{B}'_j$ falsifies $\varphi_j$. Thus $\varphi_j$, which is of cost $\leq u_j<s$, separates $\mathcal{A}'_j$ and $\mathcal{B}'_j$, so by the induction hypothesis, Player $\mathbb{I}$ has a winning strategy.
\end{proof}

\begin{theorem}
    Player $\mathbb{I}$ has a winning strategy in the $\mathrm{FC}_s\{\mathcal{M}\}(\mathcal{A},\mathcal{B})$-game if and only if $\mathcal{A}$ and $\mathcal{B}$ are separable by an $\mathrm{FO}(\mathcal{M})$-formula of cost $\leq s$.
\end{theorem}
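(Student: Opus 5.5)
The plan is to prove the theorem by strong induction on the budget $s$, assembling the results already established for the formula-cost game. The induction runs over all pairs of classes $\mathcal{A},\mathcal{B}$ simultaneously.

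\textbf{Budgets below $s_{\min}$.} If $s<s_{\min}$, Player $\mathbb{II}$ wins immediately by step 1. On the other side, every $\mathrm{FO}(\mathcal{M})$-formula has cost at least $s_{\min}$. This follows by a small structural induction: each formula is either atomic, a width-$0$ minor quantifier, or of the form $M\textbf{x}_1,\dots,\textbf{x}_k(\varphi_1,\dots,\varphi_k)$ with $k\geq 1$, whose cost strictly exceeds that of $\varphi_1$. So no formula of cost $\leq s$ exists, and both sides of the equivalence fail. (If we adopt the convention that the empty class is separable from anything, this degenerate case needs a remark. Equivalently, one may restrict attention to $s\geq s_{\min}$, as the paper's induction does.)

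\textbf{Base case $s=s_{\min}$.} This is exactly Lemma \ref{minor-base-case}.

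\textbf{Inductive step $s>s_{\min}$.} Assume the equivalence for all $s_{\min}\leq\ell<s$ and all classes. This is precisely the induction hypothesis stated before Lemma \ref{lem:minor-win-implies-separability}.

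For the forward direction, a winning strategy for Player $\mathbb{I}$ either wins in step 1 with an atomic formula or width-$0$ quantifier of cost $\leq s$, which is itself a separating formula, or proceeds through step 2. In the latter case Lemma \ref{lem:minor-win-implies-separability} yields a separating formula of cost $\leq s$.

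For the converse, take a separating formula of cost $\leq s$. If it is atomic or a width-$0$ quantifier, Player $\mathbb{I}$ plays it in step 1. Otherwise it has the form $M\textbf{x}_1,\dots,\textbf{x}_k(\varphi_1,\dots,\varphi_k)$ with $k\geq1$, and Lemma \ref{lem:separability-implies-minor-win} applies.

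\textbf{The main obstacle: budget bookkeeping.} The step I expect to need the most care is in Lemma \ref{lem:separability-implies-minor-win}. The game requires $u_1+\dots+u_k=s_i-s(M)$ exactly, with each $u_j\in\mathbb{Z}_+$. If the formula's cost is strictly below $s$, I would choose $u_j\geq s(\varphi_j)$ and put the surplus on $u_1$. This also requires monotonicity: separability at cost $\leq s(\varphi_j)$ gives separability at cost $\leq u_j$, which holds trivially because the notion is ``cost $\leq$''. Each $u_j$ satisfies $s_{\min}\leq s(\varphi_j)\leq u_j<s$, so the induction hypothesis applies. Finally, the witness and falsifier sets chosen from the semantics of $M$ must respect $\textbf{x}_j$-repetitions. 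Since extensions only contain tuples respecting the repetitions, we can intersect the chosen sets with the set of respecting tuples, and membership in $\mathcal{C}$ or $\mathcal{D}$ is preserved by how the semantics is defined.
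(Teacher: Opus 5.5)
Your skeleton is the paper's: it proves the theorem by citing Lemma~\ref{minor-base-case} together with Lemmas~\ref{lem:minor-win-implies-separability} and~\ref{lem:separability-implies-minor-win}. Your extra bookkeeping is correct and makes explicit what the paper leaves implicit: budgets $s<s_{\min}$, step-1 wins at larger budgets, and choosing $u_j\geq s(\varphi_j)$ with the surplus on $u_1$. (The empty-class caveat is unnecessary, since for $s<s_{\min}$ there is no formula of cost $\leq s$ at all.) The genuine gap is your last step, where you found the right difficulty but resolved it wrongly. Witness sets are harmless, because $P_{j,+}\subseteq\norm{\varphi_j}_{\mathbf{x}_j}$ and extensions only contain tuples respecting $\mathbf{x}_j$-repetitions. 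The semantics, however, only demands $P_{j,-}\subseteq A^{\mathbf{n}(j)}\setminus\norm{\varphi_j}_{\mathbf{x}_j}$, and that set contains every non-respecting tuple. Moreover, $\mathcal{C}$ and $\mathcal{D}$ are arbitrary isomorphism-closed witnessing classes, not closed under shrinking $P_{j,-}$. For a canonical minor $M_Q$, membership in $\mathcal{C}$ forces $P_{j,-}=D^{\mathbf{n}(j)}\setminus P_{j,+}$. This contains non-respecting tuples as soon as $\mathbf{x}_j$ repeats a variable and $|D|\geq 2$. So intersecting takes you out of $\mathcal{C}$, and in fact no legal choice exists.

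This cannot be fixed inside the proof: the ($\Leftarrow$) direction is false for the game as defined. The paper's proof of Lemma~\ref{lem:separability-implies-minor-win} (``Player~$\mathbb{I}$ chooses these sets'') silently has the same hole. For a counterexample, let $\tau=\{R\}$ with $R$ binary, and let $Q$ have width $1$ and type $(2)$ with $(D,P)\in Q$ iff $\emptyset\neq P\subseteq\{(d,d)\mid d\in D\}$. Let $\mathcal{M}=\{M_Q\}$ with $M_Q$ canonical, and let all costs be $1$. Take $A=\{a,b\}$ with $R^{\mathfrak{A}}=\{(a,a),(a,b)\}$, and $B=\{c,d\}$ with $R^{\mathfrak{B}}=\{(c,d)\}$. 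The sentence $M_Q(xx)\,R(x,x)$ has cost $2$ and separates $\{\mathfrak{A}\}$ from $\{\mathfrak{B}\}$. In the $\mathrm{FC}_2\{\mathcal{M}\}(\{\mathfrak{A}\},\{\mathfrak{B}\})$-game there is no atomic sentence, so Player~$\mathbb{I}$ must play $M_Q$ with $u_1=1$. For $\mathbf{x}_1=(x,x)$ he has no legal sets, since $(a,b)\in P^{\mathfrak{A}}_{1,-}$. For a repetition-free $\mathbf{x}_1=(x,y)$ he needs, at budget $1$, an atomic $\alpha(x,y)$ with $(A,\norm{\alpha}^{\mathfrak{A}}_{(x,y)})\in Q$ and $(B,\norm{\alpha}^{\mathfrak{B}}_{(x,y)})\notin Q$. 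None exists: $x=y$ yields the diagonal in both models, $x=x$ yields $A^2$, and every $R$-atom has an off-diagonal pair in its extension in $\mathfrak{A}$. So Player~$\mathbb{II}$ wins. The repair belongs in the game. One option is to drop the repetition requirement on falsifier sets and build $\mathcal{B}'_j$ only from those $\mathbf{c}\in P^{\mathfrak{C}}_{j,-}$ that respect $\mathbf{x}_j$-repetitions. The forward direction survives this, since non-respecting tuples lie outside every extension anyway. The other option is to restrict to repetition-free variable tuples. After either repair your argument goes through.
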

\begin{proof}
    This follows from Lemmas \ref{minor-base-case}, \ref{lem:minor-win-implies-separability} and \ref{lem:separability-implies-minor-win}.
\end{proof}

\subsection{EF-Game}

Changing the cost function, and in particular allowing zero costs, changes what kind of separability the game characterizes. We can adjust the cost of relations: formula-size games usually set $s(R)=1$ for all $R\in \tau$, but one could, for example, make the cost of relations increase with their arity. We can also adjust the cost of quantifiers (which also includes connectives): setting $s(\neg)=0$ and $s(\land)=1$ makes it equivalent to the Hella--Väänänen formula-size game, and making negation cost $1$ and all others moves free would make the game characterize the minimum amount of negations needed for separation.

In this section, we show that the game presented in the previous section also subsumes the ordinary EF-game, so long as a few small changes are made. First, since quantifier rank is a maximum over the different branches of the syntax tree rather than a sum, we copy the budget into every branch of a quantifier move rather than split across them. This means that when Player $\mathbb{I}$ plays a minor quantifier $M$, the game always continues with the budget $s_i-s(M)$ no matter which branch Player $\mathbb{II}$ chooses. Second, since some moves now cost nothing, the game could go on for infinitely long; in this case, we define that Player $\mathbb{II}$ wins the game. We call the game defined this way, with budget $s\in\mathbb{N}$, the $\mathrm{EF}_s\{\mathcal{M}\}(\mathcal{A},\mathcal{B})$\textbf{-game}.


Throughout this section, suppose that $\mathcal{M}=\{\neg,\land,\forall,\exists\}$, $s(=)=s(R)=s(\neg)=s(\land)=0$ for all $R\in\tau$ and $s(\forall)=s(\exists)=1$. As usual, we say that the \textbf{quantifier rank} $\mathrm{qr}(\varphi)$ of a formula $\varphi$ is the maximum amount of nested occurrences of $\forall,\exists$ it has.

\begin{lemma}\label{lem:ef-minor-basecase}
    Player $\mathbb{I}$ has a winning strategy in the $\mathrm{EF}_0\{\mathcal{M}\}(\mathcal{A},\mathcal{B})$-game if and only if $\mathcal{A}$ and $\mathcal{B}$ are separable by an $\mathrm{FO}(\mathcal{M})$-formula of quantifier rank $0$.
\end{lemma}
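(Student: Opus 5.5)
Both directions can be handled together by an induction on the structure of formulas and of plays, since with budget $0$ Player $\mathbb{I}$ may only make moves of cost $0$: choosing an atomic formula (cost $0$) in step 1, or playing $\neg$ or $\land$ in step 2. The quantifiers $\exists,\forall$ cost $1$ and are unavailable. Budget is copied rather than split, so every position reached remains at budget $0$. The formulas of quantifier rank $0$ are exactly the $\{\neg,\land\}$-combinations of atomic formulas. The lemma therefore says: Player $\mathbb{I}$ wins the budget-$0$ game iff some quantifier-free formula separates $\mathcal{A}$ and $\mathcal{B}$. Here $\neg$ and $\land$ are taken as the minors listed in $\mathcal{M}$, for instance canonical or as in the examples.

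For the direction from separability to a winning strategy, I would induct on the construction of the quantifier-free separating formula $\varphi$, following the proof of Lemma~\ref{lem:separability-implies-minor-win}.
\begin{itemize}
\item If $\varphi$ is atomic, Player $\mathbb{I}$ chooses it in step 1 and wins, since its cost $0\le 0$.
\item If $\varphi=\neg\psi$, Player $\mathbb{I}$ plays the minor $\neg$ with the witness and falsifier sets given by the truth values of $\psi$, exactly as in Lemma~\ref{lem:separability-implies-minor-win}. The next position is $(0,\mathcal{B}',\mathcal{A}')$, essentially the swapped classes, and $\psi$ separates it.
\item If $\varphi=\psi_1\land\psi_2$, the sets are chosen in the same way. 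Whichever branch $j$ Player $\mathbb{II}$ picks, $\psi_j$ separates the new classes, and the budget is still $0$.
\end{itemize}
Since $\varphi$ is finite, this strategy wins after finitely many rounds. That matters because infinite plays are won by Player $\mathbb{II}$.

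For the converse, the main issue is that a winning strategy might a priori involve arbitrarily long sequences of free moves. Player $\mathbb{II}$ wins every infinite play, so a winning strategy for Player $\mathbb{I}$ yields a game tree with no infinite branches. Each node has finitely many children: $\neg$ has width $1$ and $\land$ has width $2$. By König's lemma the tree is therefore finite. I would then induct on the height of this well-founded strategy tree, reusing the argument of Lemma~\ref{lem:minor-win-implies-separability} with copied budgets.
\begin{itemize}
\item At a leaf, Player $\mathbb{I}$ won by choosing a separating atomic formula, of quantifier rank $0$.
\item At an internal node, Player $\mathbb{I}$ played $\neg$ or $\land$. By the inductive hypothesis each child position is separated by a quantifier-free $\varphi_j$.
\item The witness and falsifier set argument, together with Lemma~\ref{lem:minor-equivalent}, then shows that $M(\varphi_1,\dots,\varphi_k)$ separates the current classes, where $M$ is the minor played. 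This formula has quantifier rank $0$.
\end{itemize}
Any $\exists$ or $\forall$ move is impossible at budget $0$, since its cost $1$ exceeds the budget, so no other case arises.

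I expect the main obstacle to be justifying this well-foundedness of the winning strategy: the induction on $s$ used earlier collapses, because the budget stays at $0$. The step I would try first is to define winning as the least fixed point, i.e.\ as reaching a victory in finitely many rounds along every play. Since branching is finite, König's lemma then bounds each winning strategy's depth, and the induction runs on that depth.
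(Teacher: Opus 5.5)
Your proof is correct and follows the same plan as the paper. You use induction on the quantifier-free separating formula for one direction, and induction on the finite length of Player~$\mathbb{I}$'s winning strategy for the other, with atomic, $\neg$ and $\land$ cases. The differences are refinements rather than a different route. First, you treat the $\neg$ and $\land$ moves literally as minor-quantifier moves with witness and falsifier sets, reusing the arguments of Lemmas~\ref{lem:minor-win-implies-separability} and~\ref{lem:separability-implies-minor-win} together with Lemma~\ref{lem:minor-equivalent}; the paper instead describes these moves directly as the Hella--V\"a\"an\"anen swap and cover moves. Second, you use K\"onig's lemma to justify that the strategy tree is finite, which the paper's ``induction on the amount of moves played'' tacitly assumes.
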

\begin{proof}
    ($\implies$) Suppose Player $\mathbb{I}$ has a winning strategy in the $\mathrm{EF}_0\{\mathcal{M}\}(\mathcal{A},\mathcal{B})$-game. Since Player $\mathbb{II}$ wins all infinite-length games, the strategy must involve playing a finite (possibly $0$) number of connective moves and a final atomic move, and we perform an induction on the amount of moves played.
    
    In the base case Player $\mathbb{I}$ wins at step 1, so an atomic formula separates $\mathcal A,\mathcal B$ and we are done. Otherwise Player $\mathbb{I}$'s first move is a negation or conjunction move. If it is a negation move to $(0,\mathcal B,\mathcal A)$, the residual strategy is winning with fewer moves, so by the induction hypothesis, some quantifier-free $\varphi$ separates $\mathcal B,\mathcal A$, and $\neg\varphi$ separates $\mathcal A,\mathcal B$. If it is a conjunction move with cover $\mathcal C\cup\mathcal D=\mathcal B$ and budget $0$ copied to both branches, then Player $\mathbb{I}$ wins both $(0,\mathcal A,\mathcal C)$ and $(0,\mathcal A,\mathcal D)$ with fewer moves, so by the induction hypothesis, there are quantifier-free $\varphi,\chi$ separating $\mathcal A,\mathcal C$ and $\mathcal A,\mathcal D$ respectively; then $\varphi\land\chi$ separates $\mathcal A,\mathcal B$, since every $(\mathfrak A,f)\in\mathcal A$ satisfies both conjuncts while every $(\mathfrak B,g)\in\mathcal B$ lies in $\mathcal C$ or $\mathcal D$ and so fails the corresponding conjunct. In either case the separating formula is quantifier-free, i.e.\ of rank $0$.

    ($\impliedby$) Suppose an $\mathrm{FO}(\mathcal{M})$-formula $\varphi$ of quantifier rank $0$ separates $\mathcal{A}$ and $\mathcal{B}$. We perform an induction on the structure of $\varphi$. If $\varphi$ is atomic, Player $\mathbb{I}$ wins in step 1 by choosing $\varphi$. If $\varphi=\neg\psi$, then $\psi$ separates $\mathcal B,\mathcal A$; Player $\mathbb{I}$ plays the negation move and the game moves to the position $(0,\mathcal B,\mathcal A)$, from which Player $\mathbb{I}$ wins by the induction hypothesis. If $\varphi=\psi\land\chi$, then $\varphi$ is false throughout $\mathcal B$, so $\mathcal B=\mathcal C\cup\mathcal D$ where $\mathcal C=\{(\mathfrak B,g)\in\mathcal B:\mathfrak B,g\not\models\psi\}$ and $\mathcal D=\{(\mathfrak B,g)\in\mathcal B:\mathfrak B,g\not\models\chi\}$; Player $\mathbb{I}$ plays the conjunction move with this cover, copying budget $0$ to both branches. Since $\mathfrak A,f\models\varphi$ gives $\mathfrak A,f\models\psi$ and $\mathfrak A,f\models\chi$ for all $(\mathfrak A,f)\in\mathcal A$, the formula $\psi$ separates $\mathcal A,\mathcal C$ and $\chi$ separates $\mathcal A,\mathcal D$; whichever branch Player $\mathbb{II}$ picks, Player $\mathbb{I}$ wins by the induction hypothesis.
\end{proof}

Suppose now that the following two statements are equivalent for all $\ell < s$:
\begin{enumerate}
    \item Player $\mathbb{I}$ has a winning strategy in the $\mathrm{EF}_\ell\{\mathcal{M}\}(\mathcal{A},\mathcal{B})$-game.
    \item $\mathcal{A}$ and $\mathcal{B}$ are separable by an $\mathrm{FO}(\mathcal{M})$-formula of quantifier rank $\leq \ell$.
\end{enumerate}

\begin{theorem}
    Player $\mathbb{I}$ has a winning strategy in the $\mathrm{EF}_s\{\mathcal{M}\}(\mathcal{A},\mathcal{B})$-game if and only if $\mathcal{A}$ and $\mathcal{B}$ are separable by an $\mathrm{FO}(\mathcal{M})$-formula of quantifier rank $\leq s$.
\end{theorem}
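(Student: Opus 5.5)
The plan is to argue by an outer induction on $s$, with Lemma~\ref{lem:ef-minor-basecase} as the base case and the displayed equivalence for all $\ell<s$ as the outer induction hypothesis. For each direction we also need an inner induction, because negation and conjunction moves are free. In the ($\implies$) direction the inner induction is on the number of moves in a winning strategy. Player $\mathbb{II}$ wins every infinite play, so a winning strategy for Player $\mathbb{I}$ yields a well-founded game tree, and we induct on its rank, i.e.\ the maximal number of moves before a win. In the ($\impliedby$) direction the inner induction is on the structure of a separating formula $\varphi$ with $\mathrm{qr}(\varphi)\leq s$.

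\textbf{($\impliedby$).} We induct on the structure of $\varphi$. The atomic, $\neg$ and $\land$ cases are verbatim those of Lemma~\ref{lem:ef-minor-basecase}. The only change is that the budget $s$ is copied unchanged, which is legitimate since $s(\neg)=s(\land)=0$, and the subformulas still have rank $\leq s$. If $\varphi=\exists x\,\psi$ or $\forall x\,\psi$, then $\mathrm{qr}(\psi)\leq s-1$. Player $\mathbb{I}$ plays the minor $M$ for $\exists$ or $\forall$ exactly as in the proof of Lemma~\ref{lem:separability-implies-minor-win}. For each pair in $\mathcal{A}$ he picks witness and falsifier sets in $\mathcal{C}$ contained in the extension of $\psi$ and its complement. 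For each pair in $\mathcal{B}$ he picks sets in $\mathcal{D}$; these exist by Lemma~\ref{lem:minor-equivalent} and the witnessing clauses. The next position then has budget $s-1$, and its classes $\mathcal{A}'_1,\mathcal{B}'_1$ are separated by $\psi$. The outer hypothesis with $\ell=s-1$ gives Player $\mathbb{I}$ a winning strategy from there.

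\textbf{($\implies$).} We induct on the rank of the winning strategy tree. If Player $\mathbb{I}$ wins immediately at step 1, an atomic formula separates $\mathcal{A},\mathcal{B}$. If his first move is $\neg$ or $\land$, the residual strategies from the successor positions have budget $s$ and smaller rank. The inner hypothesis then gives formulas of rank $\leq s$, which we combine as in Lemma~\ref{lem:ef-minor-basecase}. If his first move is $\exists$ or $\forall$, the successor position has budget $s-1$. The outer hypothesis gives $\psi$ with $\mathrm{qr}(\psi)\leq s-1$ separating $\mathcal{A}'_1$ and $\mathcal{B}'_1$. The argument of Lemma~\ref{lem:minor-win-implies-separability}, using the third witnessing clause for $\mathcal{D}$ and Lemma~\ref{lem:minor-equivalent}, then shows that $Mx\,\psi$ separates $\mathcal{A}$ and $\mathcal{B}$, and this formula has rank $\leq s$.

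\textbf{Main obstacle.} The hard part is making the inner induction in ($\implies$) rigorous. A winning strategy here is a tree that is well-founded because infinite plays are lost by Player $\mathbb{I}$, but it need not have finite height when Player $\mathbb{II}$ has infinitely many choices. I would avoid this by not using a numeric height. Instead I would apply well-founded induction on the positions reachable under the strategy (ordinal rank). Alternatively, I would note that every quantifier-free step is a $\neg$ or binary $\land$ move, so Player $\mathbb{II}$ has at most two choices at each such step. By K\H{o}nig's lemma, the maximal run of consecutive free moves is then finite, and the induction on the number of moves goes through exactly as in Lemma~\ref{lem:ef-minor-basecase}.
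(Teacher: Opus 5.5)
Your proposal is correct and follows the paper's proof. It uses an outer induction on $s$ with Lemma~\ref{lem:ef-minor-basecase} as the base case, handles connective moves at unchanged budget exactly as in that lemma, and handles $\exists/\forall$ moves via Lemmas~\ref{lem:minor-equivalent}, \ref{lem:minor-win-implies-separability} and \ref{lem:separability-implies-minor-win} together with the hypothesis at $s-1$. Two details you add are improvements. Your explicit inner induction makes rigorous what the paper leaves implicit; in fact Player~$\mathbb{II}$ only ever chooses a branch index $j\leq k$, so K\"onig's lemma makes the whole strategy tree finite. And choosing witness and falsifier sets inside the extensions, rather than the full extensions as the paper does, also works for non-canonical minors.
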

\begin{proof}
    ($\implies$) If Player $\mathbb{I}$'s winning strategy wins with an atomic formula or begins with a negation or conjunction, the argument of the ($\implies$) direction of Lemma \ref{lem:ef-minor-basecase} applies. Suppose then that Player $\mathbb{I}$ has a winning strategy in the $\mathrm{EF}_s\{\mathcal{M}\}(\mathcal{A},\mathcal{B})$-game. Player $\mathbb{I}$'s first move is a quantifier move: he plays $M\in\{\exists,\forall\}$, variables $\mathbf x_1,\dots,\mathbf x_k$, and witness and falsifier sets realizing tuples of $\mathcal C$ on $\mathcal A$ and of $\mathcal D$ on $\mathcal B$; every branch $j$ continues from $(s-1,\mathcal A'_j,\mathcal B'_j)$. Since the strategy is winning, Player $\mathbb{I}$ wins each of these games, so by the induction hypothesis there is, for each $j$, a formula $\varphi_j$ of rank $\le s-1$ separating $\mathcal A'_j$ and $\mathcal B'_j$. Exactly as in the proof of Lemma \ref{lem:minor-win-implies-separability}, separation of the continuation classes forces $P^{\mathfrak C}_{j,+}\subseteq\|\varphi_j\|^{\mathfrak C,h}_{\mathbf x_j}$ and $P^{\mathfrak C}_{j,-}\subseteq C^{\mathbf n(j)}\setminus\|\varphi_j\|^{\mathfrak C,h}_{\mathbf x_j}$ for every $(\mathfrak C,h)\in\mathcal A\cup\mathcal B$; combined with $(A,P^\mathfrak{A}_{1,+},\dots,P^\mathfrak{A}_{k,-})\in\mathcal C$ this yields $\mathfrak A,f\models M\mathbf x_1,\dots,\mathbf x_k(\varphi_1,\dots,\varphi_k)$ for all $(\mathfrak A,f)\in\mathcal A$, and combined with $(B,P^\mathfrak{B}_{1,+},\dots,P^\mathfrak{B}_{k,-})\in\mathcal D$ and Lemma \ref{lem:minor-equivalent} it yields $\mathfrak B,g\not\models M\mathbf x_1,\dots,\mathbf x_k(\varphi_1,\dots,\varphi_k)$ for all $(\mathfrak B,g)\in\mathcal B$. The separating formula has rank
    \[
        1+\max_j\mathrm{qr}(\varphi_j)\le 1+(s-1)=s.
    \]

    ($\impliedby$) If $\mathcal{A}$ and $\mathcal{B}$ are separated by a formula that starts with a connective, the argument of the ($\impliedby$) direction of Lemma \ref{lem:ef-minor-basecase} applies. Suppose then that $\psi=M\mathbf x_1,\dots,\mathbf x_k(\varphi_1,\dots,\varphi_k)$ with $M\in\{\exists,\forall\}$ of rank $\le s$ separates $\mathcal{A}$ and $\mathcal{B}$. Notice that each $\mathrm{qr}(\varphi_j)\le s-1$. Player $\mathbb{I}$ plays $M$, the variables $\mathbf x_j$ and the sets $P^{\mathfrak C}_{j,+}:=\|\varphi_j\|^{\mathfrak C,h}_{\mathbf x_j}$ and $P^{\mathfrak C}_{j,-}:=C^{\mathbf n(j)}\setminus\|\varphi_j\|^{\mathfrak C,h}_{\mathbf x_j}$ for every $(\mathfrak C,h)\in\mathcal A\cup\mathcal B$. As in the proof of Lemma \ref{lem:separability-implies-minor-win}, this is a legal move: $\mathfrak A,f\models\psi$ places the tuple on each $(\mathfrak A,f)\in\mathcal A$ in $\mathcal C$, and $\mathfrak B,g\not\models\psi$ together with Lemma \ref{lem:minor-equivalent} places the tuple on each $(\mathfrak B,g)\in\mathcal B$ in $\mathcal D$. For whichever $j$ Player $\mathbb{II}$ picks, every pair in $\mathcal A'_j$ satisfies $\varphi_j$ and every pair in $\mathcal B'_j$ falsifies it, so $\varphi_j$ separates them with rank $\le s-1$, and Player $\mathbb{I}$ wins the game $(s-1,\mathcal A'_j,\mathcal B'_j)$ by the induction hypothesis.
\end{proof}

\section*{Acknowledgements}

The authors were supported by the project \textit{Perspectives on computational logic}, funded by the Research Council of Finland, project number 369424.

\bibliographystyle{plainurl}
\bibliography{sources}

@incollection{hellavaananen,
  title={The size of a formula as a measure of complexity},
  author={Hella, Lauri and V{\"a}{\"a}n{\"a}nen, Jouko},
  booktitle={Logic without Borders: Essays on Set Theory, Model Theory, Philosophical Logic and Philosophy of Mathematics},
  pages={193--214},
  year={2015},
  publisher={Walter De Gruyter}
}

@article{jaakkola2025graph,
  title={{Graph Learning via Logic-Based Weisfeiler-Leman Variants and Tabularization}},
  author={Jaakkola, Reijo and Janhunen, Tomi and Kuusisto, Antti and Ortiz, Magdalena and Selin, Matias and {\v{S}}imkus, Mantas},
  journal={arXiv preprint arXiv:2508.10651},
  year={2025}
}

@article{kuusisto2015double,
  title={A double team semantics for generalized quantifiers},
  author={Kuusisto, Antti},
  journal={Journal of Logic, Language and Information},
  volume={24},
  number={2},
  pages={149--191},
  year={2015},
  publisher={Springer}
}

@inproceedings{DBLP:journals/corr/Kuusisto14,
  author       = {Antti Kuusisto},
  editor       = {Adriano Peron and
                  Carla Piazza},
  title        = {{Some Turing-Complete Extensions of First-Order Logic}},
  booktitle    = {Proceedings Fifth International Symposium on Games, Automata, Logics
                  and Formal Verification, GandALF 2014, Verona, Italy, September 10-12,
                  2014},
  series       = {{EPTCS}},
  volume       = {161},
  pages        = {4--17},
  year         = {2014},
  url          = {https://doi.org/10.4204/EPTCS.161.4},
  doi          = {10.4204/EPTCS.161.4},
  bibsource    = {dblp computer science bibliography, https://dblp.org}
}

@article{lindstrom1966first,
  title={First order predicate logic with generalized quantifiers},
  author={Lindstr{\"o}m, Per},
  journal={Theoria},
  volume={32},
  number={3},
  year={1966}
}

@book{ebbinghaus2005finite,
  title={Finite Model Theory},
  author={Ebbinghaus, Heinz-Dieter and Flum, J{\"o}rg},
  year={2005},
  publisher={Springer Science \& Business Media}
}

@incollection {MR3485648,
    AUTHOR = {Haber, Simi and Shelah, Saharon},
     TITLE = {An extension of the {E}hrenfeucht-{F}ra\"iss\'e{} game for
              first order logics augmented with {L}indstr\"om quantifiers},
 BOOKTITLE = {Fields of logic and computation. {II}},
    SERIES = {Lecture Notes in Comput. Sci.},
    VOLUME = {9300},
     PAGES = {226--236},
 PUBLISHER = {Springer, Cham},
      YEAR = {2015},
      ISBN = {978-3-319-23534-9; 978-3-319-23533-2},
   MRCLASS = {03C85 (03C55)},
  MRNUMBER = {3485648},
MRREVIEWER = {Renling\ Jin}
}

@article {MR1336413,
    AUTHOR = {Kolaitis, Phokion G. and V\"a\"an\"anen, Jouko A.},
     TITLE = {Generalized quantifiers and pebble games on finite structures},
   JOURNAL = {Ann. Pure Appl. Logic},
  FJOURNAL = {Annals of Pure and Applied Logic},
    VOLUME = {74},
      YEAR = {1995},
    NUMBER = {1},
     PAGES = {23--75},
      ISSN = {0168-0072,1873-2461},
   MRCLASS = {03C13 (03C75 03C80 03C95)},
  MRNUMBER = {1336413},
MRREVIEWER = {G.\ Fuhrken}
}

\clearpage
\appendix

\section{Quantifiers of Arbitrary Width and Type}\label{app:arbitrary}

\subsection{EF-Game}

\begin{definition}
    Let $\mathfrak{A}$ and $\mathfrak{B}$ be $\tau$-models, where $\tau$ is finite, and let $f$ and $g$ be (possibly empty) assignments over $\mathfrak{A}$ and $\mathfrak{B}$ respectively, with $\mathrm{dom}(f)=\mathrm{dom}(g)$. The EF$\{\mathcal Q\}(\mathfrak{A},\mathfrak{B}, f,g)$-game is a two-player game that starts from the position $(\mathfrak{A},\mathfrak{B},f,g)$, with Player $\mathbb I$ starting as the \textbf{attacker} and Player $\mathbb I\mathbb I$ starting as the \textbf{defender}. The $i$th round proceeds from a position $(\mathfrak{M},\mathfrak{N},h,h')$, where $h$ and $h'$ are assignments over $\mathfrak{M}$ and $\mathfrak{N}$ respectively with $\mathrm{dom}(h)=\mathrm{dom}(h')$, as follows:
    \begin{enumerate}
        \item The attacker chooses a quantifier $Q\in \mathcal{Q}$. Suppose it has width $k\in\mathbb{Z}_+$ and type $\textbf{n}\in\mathbb{Z}_+^k$. The attacker also chooses $k$ tuples     of variable symbols     $\mathbf{x}_1,\ldots,\mathbf{x}_k$, where $\mathbf{x}_j\in\mathrm{VAR}^{\mathbf{n}(j)}$ for each     $1\leq j\leq k$.
        \item The attacker chooses $k$ \textbf{witness sets} $X_1,\dots,X_k$ from the domain of either model (without loss of generality, suppose they are chosen from $M$), where each $X_j\subseteq M^{\textbf{n}(j)}$ respects $\textbf{x}_j$-repetitions, such that $(M,X_1,\dots,X_k)\in Q$. The attacker also chooses $k$ \textbf{spillover sets} $P_1,\dots,P_k$, where each $P_j\subseteq N^{\textbf{n}(j)}$ respects $\textbf{x}_j$-repetitions.
        \item The defender chooses corresponding witness sets $X'_1,\dots,X'_k$, where each $X'_j\subseteq N^{\textbf{n}(j)}$ respects $\textbf{x}_j$-repetitions, such that $(N,X'_1,\dots,X'_k)\in Q$ and $P_j\subseteq X'_j$ for all $1 \leq j \leq k$.
        \item The attacker chooses $j\in\{1,\dots,k\}$ and one of the following:
        \begin{enumerate}
            \item Chooses a $\textbf{w}'\in N^{\textbf{n}(j)}\setminus X'_j$ that respects $\textbf{x}_j$-repetitions and $\textbf{w}\in X_j$. The players swap roles. The next position         is $(\mathfrak{M},\mathfrak{N},         h\frac{\mathbf{w}}{\mathbf{x}_j},         h'\frac{\mathbf{w}'}{\mathbf{x}_j})$.
           \item Chooses $\mathbf{w}'\in X'_j$. The defender now either:
            \begin{itemize}
                \item chooses $\mathbf{w}\in X_j$, after which the next             position is $(\mathfrak{M},\mathfrak{N},             h\frac{\mathbf{w}}{\mathbf{x}_j},             h'\frac{\mathbf{w}'}{\mathbf{x}_j})$, or
                \item chooses $\mathbf{w}\in P_j$, after which the next             position is $(\mathfrak{N},\mathfrak{N},             \frac{\mathbf{w}}{\mathbf{x}_j},             \frac{\mathbf{w}'}{\mathbf{x}_j})$.
            \end{itemize}
        \end{enumerate}
        \item Let $(\mathfrak{M}',\mathfrak{N}',h_*,h'_*)$ be the position     determined in the previous step. If the pair $(h_*,h'_*)$ does not     induce a partial isomorphism between $\mathfrak{M}'$ and     $\mathfrak{N}'$, then the game ends and the attacker wins. Otherwise     a new round begins from this position.
    \end{enumerate}
    At any point during steps 2--3, immediately after a set is chosen, the opposing player may \textbf{contest} that choice instead of letting the round continue normally. When a contestation occurs, the remaining steps of the round are skipped and replaced as follows:
    \begin{itemize}
        \item \textbf{Contesting a witness set.} After $X_j$ is chosen in     step 2 or $X'_j$ is chosen in step 3, the opposing player may     contest that the set breaks equivalence by choosing $\mathbf{w}\in Y$ and     a $\mathbf{w}'\in W^{\mathbf{n}(j)}\setminus Y$ that respects $\textbf{x}_j$-repetitions, where $Y$ is the     contested set and $W$ is its domain. If the contesting player is the     attacker, then the players swap roles. The next position is     $(\mathfrak{W},\mathfrak{W},     h_W\frac{\mathbf{w}}{\mathbf{x}_j},     h_W\frac{\mathbf{w}'}{\mathbf{x}_j})$, where $h_W=h$ if $W=M$     and $h_W=h'$ if $W=N$.
        \item \textbf{Contesting a spillover set.} After $P_j$ is chosen in     step 2, the opposing player may contest that it contains a type     realized in $M$ by choosing $\mathbf{w}'\in P_j$ and     a $\mathbf{w}\in M^{\mathbf{n}(j)}$ that respects $\textbf{x}_j$-repetitions. The next position is     $(\mathfrak{M},\mathfrak{N},     h\frac{\mathbf{w}}{\mathbf{x}_j},     h'\frac{\mathbf{w}'}{\mathbf{x}_j})$.
    \end{itemize}
    In both cases, the game then proceeds to step 5: the partial isomorphism check is performed on the new position, and if it passes, a new round begins from that position.
\end{definition}

\begin{theorem}
    Theorem \ref{thm:ef-game-characterizes-equivalence} holds also when $\mathcal{Q}$ is a finite set of quantifiers of arbitrary, finite width and type.
\end{theorem}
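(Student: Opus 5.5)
We follow the same route as Lemmas \ref{lem:basecase}, \ref{lem:game-implies-equivalence} and \ref{lem:equivalence-implies-game}: induction on the number of rounds $d$, proving that Player $\mathbb{I}$ wins the $d$-round game iff $(\mathfrak{A},f)$ and $(\mathfrak{B},g)$ are separable in $\mathrm{FO}(\mathcal{Q})^d$. The base case is Lemma \ref{lem:basecase} verbatim. The theorem then follows since separability in $\mathrm{FO}(\mathcal{Q})$ means separability at some finite depth.

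\textbf{Preparatory step.} We generalize Lemma \ref{lem:X-definable} and Corollary \ref{cor:closed-definable} from single variables to tuples $\mathbf{x}\in\mathrm{VAR}^n$. The $d$-type of a tuple $\mathbf{a}$ respecting $\mathbf{x}$-repetitions is $[\mathbf{a}]^d_{\mathbf{x}}$, defined via $f\frac{\mathbf{a}}{\mathbf{x}}$. Such types are definable by formulas $\chi^d_{\mathbf{a}}(\mathbf{x})\in\mathrm{FO}(\mathcal{Q})^d$. The proof is the same induction. In the base case there are finitely many atoms over $\mathrm{dom}(f)\cup\mathbf{x}$. In the inductive step, separating formulas now have the form $Q\,\mathbf{y}_1,\dots,\mathbf{y}_k(\psi_1,\dots,\psi_k)$. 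There are still only finitely many of these up to equivalence: $\mathcal{Q}$ is finite, widths and types are bounded, variables can be renamed into a fixed finite stock, and by induction there are finitely many $\psi_j$ up to equivalence. Hence every set of $\mathbf{x}$-repetition-respecting tuples that is closed under $\equiv^d$ is definable at depth $d$. There is one subtlety: the extension $\norm{\varphi}_{\mathbf{x}}$ only contains tuples respecting $\mathbf{x}$-repetitions. This is exactly why the game requires witness sets to respect repetitions.

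\textbf{Lemma \ref{lem:game-implies-equivalence} analogue.} Assume $(\mathfrak{A},f)\equiv^{d+1}(\mathfrak{B},g)$, and let Player $\mathbb{I}$ play $Q,\mathbf{x}_1,\dots,\mathbf{x}_k$, sets $X_j$ and spillovers $P_j$. The contestation rules, together with the induction hypothesis, force the following for each $j$:
\begin{itemize}
\item $X_j$ is a union of $d$-types, so $X_j=\norm{\theta_j}^{\mathfrak{A},f}_{\mathbf{x}_j}$;
\item the $d$-closure of $P_j$ in $\mathfrak{B}$ is defined by some $\psi_j$ with empty extension in $\mathfrak{A}$.
\end{itemize}
It follows that $\mathfrak{A},f\models Q\,\mathbf{x}_1,\dots,\mathbf{x}_k(\theta_1\lor\psi_1,\dots,\theta_k\lor\psi_k)$. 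This formula has depth $d+1$, so $\mathfrak{B},g$ satisfies it too. Player $\mathbb{II}$ answers with $X'_j=\norm{\theta_j\lor\psi_j}^{\mathfrak{B},g}_{\mathbf{x}_j}$. Whatever $j$ and option the attacker then picks, the case analysis of Lemma \ref{lem:game-implies-equivalence} goes through coordinatewise, with tuples in place of points.

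\textbf{Lemma \ref{lem:equivalence-implies-game} analogue.} Take a separating formula $Q\,\mathbf{x}_1,\dots,\mathbf{x}_k(\varphi_1,\dots,\varphi_k)$. Boolean combinations reduce to this case as before, provided negation and conjunction are available; this is the same tacit assumption the paper makes for the width-1 case. Player $\mathbb{I}$ plays $X_j=\norm{\varphi_j}^{\mathfrak{A},f}_{\mathbf{x}_j}$ and takes $P_j$ to be the tuples of $\norm{\varphi_j}^{\mathfrak{B},g}_{\mathbf{x}_j}$ whose $d$-type is not realized in $\mathfrak{A}$. Since $\mathfrak{B},g$ falsifies the formula, Player $\mathbb{II}$'s answer satisfies $X'_j\neq\norm{\varphi_j}^{\mathfrak{B},g}_{\mathbf{x}_j}$ for at least one $j$. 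Player $\mathbb{I}$ picks that $j$ and finishes with the two cases of the width-1 proof. All chosen tuples respect the repetitions automatically, since they come from extensions.

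\textbf{Main obstacle.} The delicate step is the finiteness claim in the tuple version of Lemma \ref{lem:X-definable}. We must ensure that the bound variables in $\mathbf{y}_j$ can be drawn from a bounded set, and that repetitions inside $\mathbf{x}$ do not break the type bookkeeping. To handle this, we first normalize all bound variables to a fixed finite pool disjoint from $\mathrm{dom}(f)\cup\mathbf{x}$, using renaming invariance of the semantics, and then count.
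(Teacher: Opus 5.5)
Your outline is the paper's proof, step for step: the same base case, the same defender reply $X'_j=\norm{\theta_j\lor\psi_j}^{\mathfrak{B},g}_{\mathbf{x}_j}$, and the same attacker strategy. Your explicit tuple version of Lemma~\ref{lem:X-definable} is a useful addition. There is, however, a gap in your analogue of Lemma~\ref{lem:game-implies-equivalence}. The paper's own proof of that lemma has the same gap, because it also relies on the construction of Lemma~\ref{lem:X-definable}.

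\textbf{The gap.} The ``same induction'' only yields formulas that define $d$-types \emph{relative to one fixed pair} $(\mathfrak{A},f)$: $\chi^{d+1}_{\mathbf{a}}$ rules out only the types actually realized in $\mathfrak{A}$. Your argument evaluates these formulas in the \emph{other} model at two points:
\begin{enumerate}
\item You need $\norm{\psi_j}^{\mathfrak{A},f}_{\mathbf{x}_j}=\emptyset$, although $\psi_j$ was built only to define the closure of $P_j$ inside $\mathfrak{B}$.
\item In option (b), you need every $\mathbf{w}'\in\norm{\theta_j}^{\mathfrak{B},g}_{\mathbf{x}_j}$ to be $\equiv^d_{\mathrm{FO}(\mathcal{Q})}$-equivalent to some tuple of $X_j$.
\end{enumerate}
Neither follows. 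Since $\mathcal{Q}$ need not contain $\exists$, $(d+1)$-equivalence does not stop $\mathfrak{B}$ from realizing $d$-types that are absent from $\mathfrak{A}$.

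\textbf{A counterexample to the prescribed reply.} Let $\tau=\{E\}$ and $\mathcal{Q}=\{Q\}$, where $(D,P)\in Q$ iff $P$ is finite of even size. Let $\mathfrak{A}$ be two isolated points, and let $\mathfrak{B}$ be two isolated points plus a symmetric edge between $c$ and $d$. Every union of $1$-types has even size in both models, so $\mathfrak{A}\equiv^2_{\mathrm{FO}(\mathcal{Q})}\mathfrak{B}$. Now suppose the attacker plays $X=A$ and $P=\emptyset$. The formula of Lemma~\ref{lem:X-definable} for the unique $1$-type of $\mathfrak{A}$ is just $(x=x)\land\neg E(x,x)$. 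So the prescribed reply is $X'=B$, which is legal since $|B|=4$. The attacker then picks $c$. Every answer $a\in A$ is separated from $c$ at depth $1$ by $Q y\,E(x,y)$, so the defender loses. The theorem itself is not refuted here: the reply $X'=\{$isolated points of $\mathfrak{B}\}$ wins.

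\textbf{The repair.} It is already contained in your preparatory count. Up to equivalence over \emph{all} models, there are only finitely many formulas of depth $\le d$ with free variables in $\mathrm{dom}(f)\cup\mathbf{x}_j$. Let $\chi^d_{\mathbf{a}}$ be the conjunction of the representatives true at $\mathbf{a}$ together with the negations of those false at $\mathbf{a}$. This is a complete type formula: for every $(\mathfrak{B},g)$ with $\mathrm{dom}(g)=\mathrm{dom}(f)$,
\[
\mathfrak{B},g\frac{\mathbf{b}}{\mathbf{x}_j}\models\chi^d_{\mathbf{a}}
\iff
\mathfrak{B},g\frac{\mathbf{b}}{\mathbf{x}_j}\equiv^d_{\mathrm{FO}(\mathcal{Q})}\mathfrak{A},f\frac{\mathbf{a}}{\mathbf{x}_j}.
\]
Take $\theta_j$ and $\psi_j$ to be disjunctions of such formulas. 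Then the spillover contestation genuinely forces $\norm{\psi_j}^{\mathfrak{A},f}_{\mathbf{x}_j}=\emptyset$. Option (b) always has a $d$-equivalent answer in $X_j$ or $P_j$. The rest of your argument then goes through as written.
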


\begin{proof}
    The proof follows the same structure as before, with the attacker and defender now choosing $k$ witness sets, $k$ spillover sets, and $k$ variable tuples $\mathbf{x}_1,\ldots,\mathbf{x}_k$ corresponding to the width $k$ of the quantifier. The base case (Lemma \ref{lem:basecase}) is thus identical, and below, we give a condensed version of the induction case.

    For generalizing Lemma \ref{lem:game-implies-equivalence}, assume contrapositively that $(\mathfrak{A},f)$ and $(\mathfrak{B},g)$ are $(d+1)$-equivalent, and suppose the attacker chooses a quantifier $Q\in\mathcal{Q}$ of width $k$ and type $\mathbf{n}$, variable tuples $\mathbf{x}_1,\ldots,\mathbf{x}_k$, witness sets $X_1,\ldots,X_k$ (where each $X_j\subseteq A^{\mathbf{n}(j)}$ respects $\textbf{x}_j$-repetitions), and spillover sets $P_1,\ldots,P_k$ (where each $P_j\subseteq B^{\mathbf{n}(j)}$ respects $\textbf{x}_j$-repetitions). By the same contestation arguments as before, each $X_j$ is closed under $\equiv^d_{\mathrm{FO}(\mathcal{Q})}$ relative to $(\mathfrak{A},f)$ and $\mathbf{x}_j$, and hence definable by a formula $\theta_j(\mathbf{x}_j)\in\mathrm{FO}(\mathcal{Q})^d$. Similarly, the closure under $\equiv^d_{\mathrm{FO}(\mathcal{Q})}$ relative to $(\mathfrak{B},g)$ and $\textbf{x}_j$ of each $P_j$ is definable by a formula $\psi_j(\mathbf{x}_j)\in\mathrm{FO}(\mathcal{Q})^d$ with $\|\psi_j\|^{\mathfrak{A},f}_{\mathbf{x}_j}=\emptyset$. Since $(A,X_1,\ldots,X_k)\in Q$ and $\norm{\theta_j\lor\psi_j}^{\mathfrak{A},f}_{\mathbf{x}_j}=X_j$ for each $j$, we have $\mathfrak{A},f\models Q\,\mathbf{x}_1,\ldots,\mathbf{x}_k(\theta_1 \lor\psi_1,\ldots,\theta_k\lor\psi_k)$. By the $(d+1)$-equivalence assumption, $(\mathfrak{B},g)$ also satisfies this sentence, so the defender can choose $X'_j:=\norm{\theta_j\lor\psi_j}^{\mathfrak{B},g}_{\mathbf{x}_j}$ for each $j$, which satisfies $(B,X'_1,\ldots,X'_k)\in Q$ and $P_j\subseteq X'_j$. The attacker then chooses some component $j$ and one of the two options. In either case, the argument from Lemma \ref{lem:game-implies-equivalence} applies to the chosen component: if the attacker picks $\mathbf{w}'\in B^{\mathbf{n}(j)}\setminus X'_j$ and $\mathbf{w}\in X_j$, then $\mathfrak{A},f\frac{\mathbf{w}}{\mathbf{x}_j}\models\theta_j$ and $\mathfrak{B},g\frac{\mathbf{w}'}{\mathbf{x}_j}\not\models \theta_j\lor\psi_j$, so they are $d$-separable; if the attacker picks $\mathbf{w}'\in X'_j$, then $\mathbf{w}'$ falls into one of the sub-cases (in $\norm{\psi_j}^{\mathfrak{B},g}_{\textbf{x}_j}$ or in $\norm{\theta_j}^{\mathfrak{B},g}_{\textbf{x}_j}$) and the defender responds exactly as in Lemma \ref{lem:game-implies-equivalence}.

    For generalizing Lemma \ref{lem:equivalence-implies-game}, suppose the formula $Q\,\mathbf{x}_1,\ldots,\mathbf{x}_k(\varphi_1,\ldots,\varphi_k)$ separates $(\mathfrak{A},f)$ and $(\mathfrak{B},g)$. The attacker chooses $X_j:=\norm{\varphi_j}^{\mathfrak{A},f}_{\mathbf{x}_j}$ and $P_j:=\{\mathbf{v}'\in \norm{\varphi_j}^{\mathfrak{B},g}_{\mathbf{x}_j}\mid \mathfrak{B},g\frac{\mathbf{v}'}{\mathbf{x}_j} \not\equiv^d_{\mathrm{FO}(\mathcal{Q})} \mathfrak{A},f\frac{\mathbf{v}}{\mathbf{x}_j}\text{ for all } \mathbf{v}\in A^{\mathbf{n}(j)}\}$ for each $j$. Since we know $\mathfrak{B},g\not\models Q\,\mathbf{x}_1,\ldots,\mathbf{x}_k(\varphi_1,\ldots,\varphi_k)$, the defender's witness sets satisfy $(B,X'_1,\ldots,X'_k)\in Q$ but $X'_j\neq \norm{\varphi_j}^{\mathfrak{B},g}_{\mathbf{x}_j}$ for at least one $j$. The attacker chooses such a $j$: then either some $\mathbf{w}'\in X'_j$ does not satisfy $\varphi_j$ or some $\mathbf{w}'\in B^{\mathbf{n}(j)}\setminus X'_j$ satisfies $\varphi_j$. In either case, the corresponding argument from Lemma \ref{lem:equivalence-implies-game} applies to the chosen component $j$.
\end{proof}

\subsection{Formula-Size Game}

The \textbf{size} of a formula of the form $Q \textbf{x}_1,\dots,\textbf{x}_k(\varphi_1,\dots,\varphi_k)$ is defined to be $s(\varphi_1)+\dots+s(\varphi_k)+1$.

\begin{definition}
    Let $\mathcal{A}$ and $\mathcal{B}$ be classes of $\tau$-pairs. The $\mathrm{EF}_{s}\{\mathcal{Q}\}(\mathcal{A},\mathcal{B})$-game is a two-player game that starts in the position $(s,\mathcal{A},\mathcal{B})$. The $i$th round proceeds from the position $(s_i,\mathcal{A}_i,\mathcal{B}_i)$ as follows:
    \begin{enumerate}
        \item If there is an atomic $\mathrm{FO}(\mathcal{Q})$-formula $\varphi$ that separates $\mathcal{A}_i$ and $\mathcal{B}_i$, then the game ends and Player $\mathbb{I}$ wins.
        \item Otherwise, if $s_i=1$, then the game ends and Player $\mathbb{II}$ wins.
        \item If neither of the above conditions holds, then Player $\mathbb{I}$ chooses one of the following three options:
        \begin{enumerate}
            \item \textbf{Swap classes (negation).} A new round begins from the position $(s_i-1,\mathcal{B}_i,\mathcal{A}_i)$.
            \item \textbf{Right splitting move (conjunction).} Player $\mathbb{I}$ chooses $u,v\in \mathbb{Z}_+$ such that $u+v=s_i$, and chooses (possibly overlapping) sets $\mathcal{C},\mathcal{D}\subseteq \mathcal{B}_i$ such that $\mathcal{C}\cup\mathcal{D}=\mathcal{B}_i$. Player $\mathbb{II}$ then responds by choosing whether the next round starts from the position $(u,\mathcal{A}_i,\mathcal{C})$ or from the position $(v,\mathcal{A}_i,\mathcal{D})$.
            \item \textbf{Supplementing move (quantifier).} Player $\mathbb{I}$ first chooses a quantifier $Q \in \mathcal{Q}$ of width $k\in\mathbb{Z}_+$ and type $\textbf{n}\in\mathbb{Z}_+^k$, and $k$ tuples of variable symbols $\textbf{x}_1,\dots,\textbf{x}_k$, where $\textbf{x}_j\in\text{VAR}^{\textbf{n}(j)}$ for each $1 \leq j \leq k$. Player $\mathbb{I}$ then chooses $u_1,\dots,u_k\in\mathbb{Z}_+$ such that $u_1+\dots+u_k=s_i-1$, and $k$ functions $P_1,\dots,P_k$, each of which is a function $P_j:\mathcal{A}_i\cup \mathcal{B}_i\to\bigcup\{\mathcal{P}(C^{\textbf{n}(j)})\mid (\mathfrak{C},h)\in\mathcal{A}_i\cup\mathcal{B}_i\}$ such that $P_j(\mathfrak{C},h)\subseteq C^{\textbf{n}(j)}$ respects $\textbf{x}_j$-repetitions,
            \begin{align*}
                &\big(A,\, P_1(\mathfrak{A},f),\, \ldots,\, P_k(\mathfrak{A},f)\big) \in Q \quad \text{for all } (\mathfrak{A},f) \in \mathcal{A}_i, \text{ and}\\
                &\big(B,\, P_1(\mathfrak{B},g),\, \ldots,\, P_k(\mathfrak{B},g)\big) \notin Q \quad \text{for all } (\mathfrak{B},g) \in \mathcal{B}_i.
            \end{align*}
            (If such a collection of functions does not exist, Player $\mathbb{I}$ cannot choose this move.) This induces, for each $1\leq j\leq k$, a pair of classes
            \begin{align*}
                &\mathcal{C}^+_j:=\{(\mathfrak{C},h \frac{\textbf{v}}{\textbf{x}_j})\mid (\mathfrak{C},h)\in \mathcal{A}_i\cup \mathcal{B}_i \text{ and }\textbf{v}\in P_j(\mathfrak{C},h)\} \text{ and}\\
                &\mathcal{C}^-_j:=\{(\mathfrak{C},h \frac{\textbf{v}}{\textbf{x}_j})\mid (\mathfrak{C},h)\in \mathcal{A}_i\cup \mathcal{B}_i\\&\text{ and }\textbf{v}\in C^{\textbf{n}(j)}\setminus P_j(\mathfrak{C},h) \text{ respects }\textbf{x}_j\text{-repetitions}\}.
            \end{align*}
            Player $\mathbb{II}$ then chooses $j\in\{1,\dots,k\}$, and the next round starts from the position $(u_j,\mathcal{C}^+_j,\mathcal{C}^-_j)$.
        \end{enumerate}
    \end{enumerate}
\end{definition}

Notice that now, the supplementing move induces a split in the size budget and a choice for Player $\mathbb{II}$, since multiple formulas can be quantified.

\begin{theorem}
    Theorem \ref{thm:width-game} holds also when $\mathcal{Q}$ is a finite set of quantifiers of arbitrary, finite width and type.
\end{theorem}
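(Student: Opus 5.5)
We prove the statement by strong induction on the budget $s$, following the proof of Theorem \ref{thm:width-game}. The base case $s=1$ and the negation and conjunction cases carry over verbatim, because those moves do not depend on the width or type of the quantifiers. The only new case is the supplementing move for a quantifier $Q$ of width $k$ and type $\mathbf{n}$. For this case, note that a formula $Q\,\mathbf{x}_1,\dots,\mathbf{x}_k(\varphi_1,\dots,\varphi_k)$ of size $\le s$ satisfies $\sum_j s(\varphi_j)\le s-1$. We may assume every $\varphi_j$ has size at least $1$, so $k\le s-1$ whenever the move is available. The degenerate width-$0$ quantifiers are really atomic-like separators. I would treat them as in the base case, or note that the game as defined only lets Player $\mathbb{I}$ play quantifiers with $k\ge 1$.

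For the ($\implies$) direction, suppose Player $\mathbb{I}$'s winning strategy opens with $Q$, tuples $\mathbf{x}_1,\dots,\mathbf{x}_k$, budgets $u_1,\dots,u_k$ and functions $P_1,\dots,P_k$. Then for every $j$ he wins $\mathrm{FS}_{u_j}\{\mathcal{Q}\}(\mathcal{C}^+_j,\mathcal{C}^-_j)$. Since $u_j<s$, the induction hypothesis gives $\varphi_j$ with $s(\varphi_j)\le u_j$ separating $\mathcal{C}^+_j$ and $\mathcal{C}^-_j$. Exactly as in the width-one argument, we get $P_j(\mathfrak{C},h)=\norm{\varphi_j}^{\mathfrak{C},h}_{\mathbf{x}_j}$ for every $(\mathfrak{C},h)\in\mathcal{A}\cup\mathcal{B}$. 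Here one must be careful: extensions only range over tuples respecting $\mathbf{x}_j$-repetitions, because $h\frac{\mathbf{v}}{\mathbf{x}_j}$ is only defined for such $\mathbf{v}$. This is precisely why $P_j$ is required to respect repetitions and why $\mathcal{C}^-_j$ only collects respecting tuples. So $\norm{\varphi_j}$ and $P_j$ live in the same space of respecting tuples, and equality follows from separation in both directions. The membership conditions on $P$ then yield that $Q\,\mathbf{x}_1,\dots,\mathbf{x}_k(\varphi_1,\dots,\varphi_k)$ holds on $\mathcal{A}$ and fails on $\mathcal{B}$. Its size is $\sum_j s(\varphi_j)+1\le\sum_j u_j+1=s$.

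For the ($\impliedby$) direction, suppose $Q\,\mathbf{x}_1,\dots,\mathbf{x}_k(\varphi_1,\dots,\varphi_k)$ of size $\le s$ separates $\mathcal{A}$ and $\mathcal{B}$. Player $\mathbb{I}$ sets $P_j(\mathfrak{C},h):=\norm{\varphi_j}^{\mathfrak{C},h}_{\mathbf{x}_j}$. He chooses $u_j\ge s(\varphi_j)$, all positive, with $\sum u_j=s-1$; this is possible since $\sum s(\varphi_j)\le s-1$, by distributing the slack to, say, $u_1$. The move is legal by the semantics of $Q$, and each $P_j$ respects repetitions by the definition of extension. Whatever $j$ Player $\mathbb{II}$ picks, $\varphi_j$ separates $\mathcal{C}^+_j$ from $\mathcal{C}^-_j$. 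Since $s(\varphi_j)\le u_j<s$, the induction hypothesis gives Player $\mathbb{I}$ a winning strategy.

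The main obstacle is the bookkeeping of variable repetitions in the equality $P_j=\norm{\varphi_j}_{\mathbf{x}_j}$. A secondary subtlety is that when $\mathbf{x}_j$ overlaps earlier variables or other $\mathbf{x}_{j'}$, the assignments $h\frac{\mathbf{v}}{\mathbf{x}_j}$ overwrite values. This causes no difficulty, because each branch uses only its own $\mathbf{x}_j$, matching the semantics of $Q$.
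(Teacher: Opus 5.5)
Your proposal is correct and follows the paper's proof essentially step for step. You apply the induction hypothesis on each branch to get $\varphi_j$, identify $P_j(\mathfrak{C},h)=\norm{\varphi_j}^{\mathfrak{C},h}_{\mathbf{x}_j}$, and conversely play the extensions as the functions $P_j$. Your choice of $u_j\ge s(\varphi_j)$ with the slack pushed into $u_1$ is slightly more careful than the paper, which takes $u_j:=s(\varphi_j)$ and so only satisfies $\sum_j u_j=s-1$ when the separating formula has size exactly $s$; your width-$0$ caveat likewise points at an assumption ($k\in\mathbb{Z}_+$, $\mathbf{n}\in\mathbb{Z}_+^k$) that the paper leaves implicit in the game definition.
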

\begin{proof}
    The base case and the induction cases for negation and conjunction are proved exactly as in Theorem \ref{thm:width-game}, so we prove the induction case for quantification. Suppose that $s>1$ and that the statement holds for all $l<s$.

    We first prove the $\implies$ direction. Suppose Player $\mathbb{I}$ has a winning strategy for the $s$-game that begins by choosing a quantifier $Q\in\mathcal{Q}$ of width $k$ and type $\textbf{n}$, $k$ tuples of variable symbols $\textbf{x}_1,\dots,\textbf{x}_k$, the splits $u_1,\dots,u_k\in\mathbb{Z}_+$ and the functions $P_1,\dots,P_k$. Since the strategy is winning, Player $\mathbb{I}$ has a winning strategy in the $\mathrm{EF}_{u_j}\{Q\}(\mathcal{C}^+_j,\mathcal{C}^-_j)$-game for all $1 \leq j \leq k$. Since each $u_j$ is smaller than $s$, by the induction hypothesis, for each pair of classes $\mathcal{C}^+_j$ and $\mathcal{C}^-_j$, there exists a formula $\varphi_j$ of size $\leq u_j$ that separates them.

    We show that $P_j(\mathfrak{C},h)=\norm{\varphi_j}_{\textbf{x}_j}^{\mathfrak{C},h}$ for all $(\mathfrak{C},h)\in\mathcal{A}\cup\mathcal{B}$ and all $1 \leq j \leq k$. If $\textbf{v}\in P_j(\mathfrak{C},h)$, then $(\mathfrak{C},h\frac{\textbf{v}}{\textbf{x}_j})\in \mathcal{C}^+_j$ by definition, so $\mathfrak{C},h\frac{\textbf{v}}{\textbf{x}_j}\models\varphi_j$ by separation. Conversely, if $\textbf{v}\notin P_j(\mathfrak{C},h)$, then $(\mathfrak{C},h\frac{\textbf{v}}{\textbf{x}_j})\in \mathcal{C}^-_j$ by definition, so $\mathfrak{C},h\frac{\textbf{v}}{\textbf{x}_j}\not\models\varphi_j$ by separation. Together, we thus have $\textbf{v} \in P_j(\mathfrak{C},h)\iff \mathfrak{C},h\frac{\textbf{v}}{\textbf{x}_j}\models \varphi_j$, establishing the desired equality.

    It follows that for all $(\mathfrak{A},f)\in\mathcal{A}$, we have $P_j(\mathfrak{A},f)=\norm{\varphi_j}^{\mathfrak{A},f}_{\textbf{x}_j}$, and since $(A,P_1(\mathfrak{A},f),\dots,P_k(\mathfrak{A},f))\in Q$ by the definition of $P$, we conclude $\mathfrak{A},f\models Q\textbf{x}_1,\dots,\textbf{x}_k(\varphi_1,\dots,\varphi_k)$. Similarly, for all $(\mathfrak{B},g)\in\mathcal{B}$, we have $P_j(\mathfrak{B},g)=\norm{\varphi_j}^{\mathfrak{B},g}_{\textbf{x}_j}$, and since $(B,P_1(\mathfrak{B},g),\dots,P_k(\mathfrak{B},g))\notin Q$ by the definition of $P$, we conclude $\mathfrak{B},g\not\models Q\textbf{x}_1,\dots,\textbf{x}_k(\varphi_1,\dots,\varphi_k)$. Thus $Q\textbf{x}_1,\dots,\textbf{x}_k(\varphi_1,\dots,\varphi_k)$ of size $u_1+\dots+u_k+1=s$ separates $\mathcal{A}$ and $\mathcal{B}$.

    We then prove the $\impliedby$ direction. Suppose $Q\textbf{x}_1,\dots,\textbf{x}_k(\varphi_1,\dots,\varphi_k)$ of size $\leq s$ separates $\mathcal{A}$ and $\mathcal{B}$. Player $\mathbb{I}$ chooses $P_j(\mathfrak{C},h):=\norm{\varphi_j}_{\textbf{x}_j}^{\mathfrak{C},h}$ for all $(\mathfrak{C},h)\in\mathcal{A}\cup\mathcal{B}$. This is a valid move, since for all $(\mathfrak{A},f)\in\mathcal{A}$, we have $\mathfrak{A},f\models Q\textbf{x}_1,\dots,\textbf{x}_k(\varphi_1,\dots,\varphi_k)$, so $(A,P_1(\mathfrak{A},f),\dots,P_k(\mathfrak{A},f))\in Q$, and for all $(\mathfrak{B},g)\in\mathcal{B}$, we have $\mathfrak{B},g\not\models Q\textbf{x}_1,\dots,\textbf{x}_k(\varphi_1,\dots,\varphi_k)$, so $(B,P_1(\mathfrak{B},g),\dots,P_k(\mathfrak{B},g))\notin Q$. Player $\mathbb{I}$ also chooses $u_j:=s(\varphi_j)$ for each $j$.

    For any $1\leq j \leq k$ chosen by Player $\mathbb{II}$, the game continues from the position $(u_j,\mathcal{C}^+_j,\mathcal{C}^-_j)$. Now, for any $(\mathfrak{C},h)\in\mathcal{A}\cup\mathcal{B}$ and $\textbf{v}\in C^{\textbf{n}(j)}$, if $\textbf{v}\in P_j(\mathfrak{C},h)$, then $\mathfrak{C},h\frac{\textbf{v}}{\textbf{x}_j}\models \varphi_j$ and $(\mathfrak{C},h\frac{\textbf{v}}{\textbf{x}_j})\in \mathcal{C}^+_j$, and if $\textbf{v}\notin P_j(\mathfrak{C},h)$, then $\mathfrak{C},h\frac{\textbf{v}}{\textbf{x}_j}\not\models \varphi_j$ and $(\mathfrak{C},h\frac{\textbf{v}}{\textbf{x}_j})\in \mathcal{C}^-_j$. Thus $\mathcal{C}^+_j$ and $\mathcal{C}^-_j$ are separated by $\varphi_j$, and since $\varphi_j$ is of size $\leq u_j<s$, by the induction hypothesis, Player $\mathbb{I}$ has a winning strategy from this position.
\end{proof}

Finally, it is clear that Corollary \ref{cor:weak-game}, Theorem \ref{thm:weak-strong-equivalence} and Lemma \ref{lem:model-game-sound} hold (with appropriate modifications) also when the game is played with quantifiers of arbitrary, finite width and type.

\end{document}